\renewcommand{\@IEEEsectpunct}{.\ \,}
\newtheorem{lemma}{\textbf{Lemma}}
\newtheorem{corollary}{\textbf{Corollary}}
\newtheorem{proposition}{\textbf{Proposition}}
\newtheorem{theorem}{\textbf{Theorem}}
\newtheorem{definition}{\textbf{Definition}}
\def\CF{{\widehat{\mathscr{P}}}}
\def\S{{\mathcal{S}}}
\def\R{{\mathcal{R}}}
\def\Lop{\mathrm{L}}
\def\Top{\mathrm{T}}
\def\C{ \mathbb{C}}
\def\Z{ \mathbb{Z}}
\def\N{ \mathbb{N}}
\def\R{ \mathbb{R}}
\def\drm{\mathrm{d}}
\def\Rstar{ \mathbb{R} \backslash \{0\}}
\def\ii{\mathrm{i}}
\def\ee{\mathrm{e}}
\def\Dop{\mathrm{D}}
\def\FL{(-\Delta)^{\gamma/2}}
\def\Rstar{\mathbb{R}\backslash\{0\}}
\def\Id{\mathrm{I}}
\begin{document}
\title{Gaussian and Sparse Processes Are Limits of Generalized Poisson Processes}

\author{Julien Fageot, Virginie Uhlmann, and Michael Unser\thanks{The authors are with the Biomedical Imaging Group, \'Ecole Polytechnique F\'ed\'erale de Lausanne, Lausanne 1015, Switzerland (e-mail:
julien.fageot@epfl.ch; virginie.uhlmann@epfl.ch; michael.unser@epfl.ch). %
This work was supported in part by the European Research Council under Grant H2020-ERC (ERC grant agreement No 692726 - GlobalBioIm), and in part by the Swiss National Science Foundation under Grant 200020\_162343/1.
}}

\maketitle

\begin{abstract}
	The theory of sparse stochastic processes offers a broad class of statistical models to study signals. 
	In this framework, signals are represented as realizations of random processes that are solution of linear stochastic differential equations driven by white L\'evy noises.
	Among these processes, generalized Poisson processes based on compound-Poisson noises admit an interpretation
	as  random $\Lop$-splines with random knots and weights. 
	We demonstrate that every generalized L\'evy process---from Gaussian to sparse---can be understood as the limit in law of a sequence of generalized Poisson processes.
	This enables a new conceptual understanding of sparse processes and suggests simple algorithms for the numerical generation of such objects. 
\end{abstract}

\begin{IEEEkeywords}
Sparse stochastic processes, compound-Poisson processes, $\mathrm{L}$-splines, generalized random processes, infinite divisibility. 
\end{IEEEkeywords}

\section{Introduction}

In their landmark paper on linear prediction~\cite{BodeShannon}, H. W. Bode and C. E. Shannon proposed that \textit{``a (...) noise can be thought of as made up of a large number of closely spaced and very short impulses."}
In this work, we formulate this intuitive interpretation of a white noise in a mathematically rigorous way. This allows us to extend this intuition beyond noise and to draw additional properties for the class of stochastic processes that can be linearly transformed into a white noise. More precisely, we show that the law of these processes can be approximated as closely as desired by generalized Poisson processes that can also be viewed as random $\Lop$-splines.

Let us define the first ingredient of our work. Splines are continuous-domain functions characterized by a sequence of knots and sample values. They provide a powerful framework to build discrete descriptions of continuous objets in sampling theory~\cite{Unser1999splines}. Initially defined as piecewise-polynomial functions~\cite{Schoenberg1973cardinal}, they were further generalized, starting from their connection with differential operators~\cite{Schultz1967Lsplines,Madych1990polyharmonic,Unser2000fractional}. 
Let  $\Lop$ be a suitable linear differential operator such as the derivative. Then, the function $s : \R^d \rightarrow \R$ is a \emph{non-uniform $\Lop$-spline} if
\begin{equation} \label{eq:splines}
\Lop s = \sum_{k= 0}^\infty a_k \delta (\cdot - \bm{x}_k) := w_\delta
\end{equation}
is a sum of weighted and shifted Dirac impulses. The $a_k$ are the weights and the $\bm{x}_k$ the knots of the spline.
Deterministic splines associated to various differential operators are depicted in Figure~\ref{fig:splines}. Note that the knots $\bm{x}_k$ and weights $a_k$ can also be random, yielding stochastic splines.

The second main ingredient is a generative model of stochastic processes. Specifically, we consider linear differential equations of the form 
\begin{equation} \label{eq:SDE}
	\Lop s = w,
\end{equation}
where $\Lop$ is a differential operator called the whitening operator and $w$ is a $d$-dimensional L\'evy noise or innovation.  Examples of such stochastic processes are illustrated in Figure~\ref{fig:stocs}.

Our goal in this paper is to build a bridge between linear stochastic differential equations (linear SDE) and splines. 
By comparing \eqref{eq:splines} and  \eqref{eq:SDE}, one easily realizes that the differential operator $\Lop$ connects the random and deterministic frameworks.
The link is even stronger when one notices that compound-Poisson white noises can be written as $w_{\mathrm{Poisson}} = w_\delta$  \cite{Unser2011stochastic}.
This means that the random processes that are solution of $\Lop s = w_{\mathrm{Poisson}} = w_\delta$ are (random) \mbox{$\Lop$-splines}.

\begin{figure}
\centering
\begin{subfigure}[t]{0.30\linewidth}
                \includegraphics[width=\textwidth]{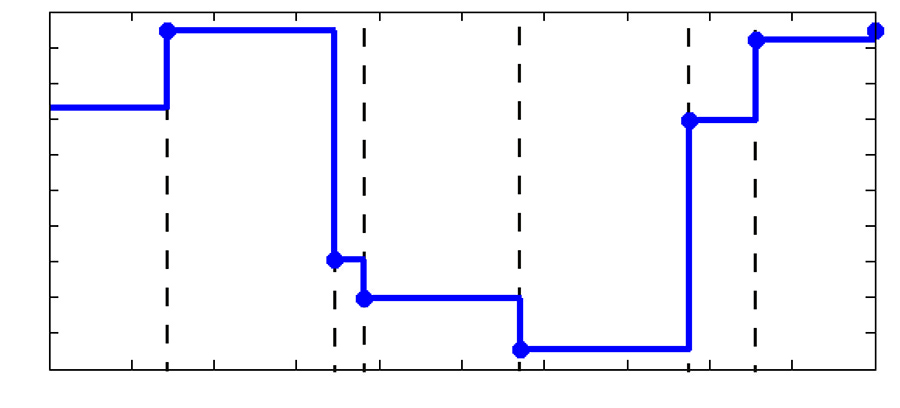}
                \caption{}
                \label{fig:piecewiseconstant}
\end{subfigure} 
\begin{subfigure}[t]{0.30\linewidth}
                \includegraphics[width=\textwidth]{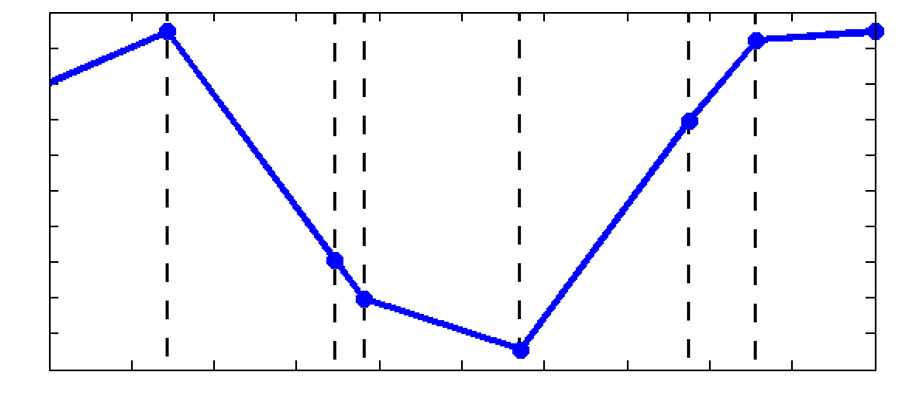}
                \caption{}
                \label{fig:piecewiselinear}
\end{subfigure} 
\caption{\label{fig:splines}Examples of deterministic splines: (a) piecewise constant, (b) piecewise linear.}
\end{figure}

\begin{figure}
\centering
\begin{subfigure}[t]{0.30\linewidth}
                \includegraphics[width=\textwidth]{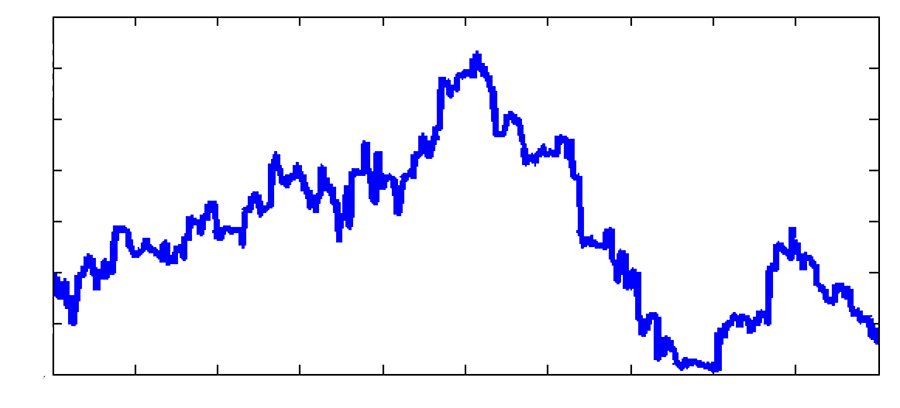}
                \caption{}
                \label{fig:brownian}
\end{subfigure} 
\begin{subfigure}[t]{0.30\linewidth}
                \includegraphics[width=\textwidth]{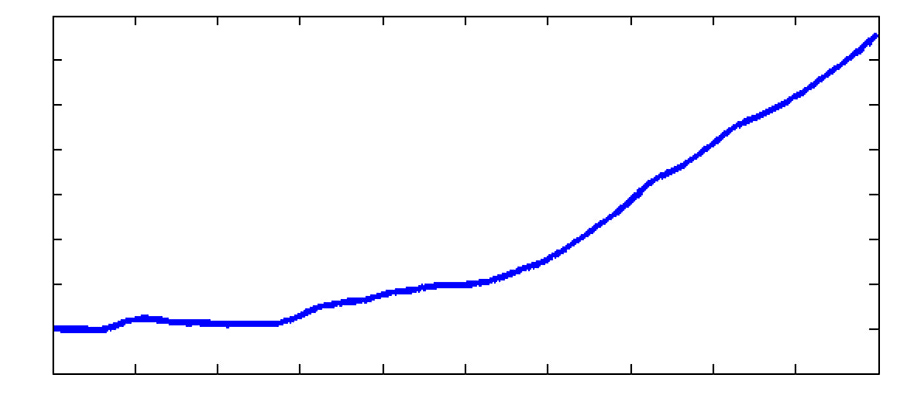}
                \caption{}
                \label{fig:gaussian}
\end{subfigure} 
\caption{\label{fig:stocs}Examples of random processes:  (a) Brownian motion, (b) second-order Gaussian process.}
\end{figure}

Our main result thus uncovers the link between splines and random processes through the use of Poisson processes.  
A Poisson noise is made of a sparse sequence of weighted impulses whose jumps follow a common law. 
The average density of impulses $\lambda$ is the primary parameter of such a Poisson white noise: Upon increasing $\lambda$, one increases the average number of impulses by unit of time. Meanwhile, the intensity of the impulses is governed by the common law of the jumps of the noise: Upon decreasing this intensity, one makes the weights of the impulses smaller. By combining these two effects,  one can recover the intuitive conceptualization of a white noise proposed by Bode and Shannon in \cite{BodeShannon}. 

\begin{theorem} \label{theo:main}
Every random process $s$ solution of \eqref{eq:SDE}
 is the limit in law of the sequence $(s_n)$ of generalized Poisson processes driven by compound-Poisson white noises and whitened by $\Lop$.
\end{theorem}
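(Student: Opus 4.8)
\emph{Proof strategy.} The plan is to pass to characteristic functionals and reduce the theorem to an approximation statement for the driving noise alone. Recall that a generalized L\'evy process is determined by its characteristic functional $\CF$, and that the white L\'evy noise $w$ in \eqref{eq:SDE} satisfies
\begin{equation*}
	\CF_w(\varphi) = \exp\left( \int_{\R^d} f(\varphi(\bm{x})) \, \drm \bm{x} \right),
\end{equation*}
where the L\'evy exponent $f : \R \to \C$ has the L\'evy--Khintchine form
\begin{equation*}
	f(\xi) = \ii b \xi - \tfrac{\sigma^2}{2} \xi^2 + \int_{\Rstar} \bigl( \ee^{\ii \xi a} - 1 - \ii \xi a \One_{|a| \le 1} \bigr) \, \nu(\drm a)
\end{equation*}
for some drift $b \in \R$, variance $\sigma^2 \ge 0$, and L\'evy measure $\nu$. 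Since the solution of \eqref{eq:SDE} is obtained by applying a fixed continuous inverse operator $\Lop^{-1}$ of $\Lop$ to the noise, one has $\CF_s(\varphi) = \CF_w(\Lop^{-1\ast}\varphi)$, and likewise $\CF_{s_n}(\varphi) = \CF_{w_n}(\Lop^{-1\ast}\varphi)$ whenever $\Lop s_n = w_n$. By the L\'evy-type continuity theorem for generalized random processes, $s_n \to s$ in law as soon as $\CF_{w_n}(\psi) \to \CF_w(\psi)$ for every test function $\psi$; hence it suffices to produce compound-Poisson white noises $w_n$ with $w_n \to w$ in law.

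First I would construct the noises explicitly. Let $(X_t)_{t \ge 0}$ be the classical L\'evy process on $\R$ with $\mathbb{E}[\ee^{\ii \xi X_t}] = \ee^{t f(\xi)}$, let $P_n$ be the law of $X_{1/n}$ (an infinitely divisible probability measure with $\widehat{P_n}(\xi) = \ee^{f(\xi)/n}$), and let $w_n$ be the compound-Poisson white noise with knot rate $\lambda_n = n$ and i.i.d.\ weights of common law $P_n$. Then $\Lop s_n = w_n$ is a random $\Lop$-spline whose knots form a Poisson process of rate $n$ and whose weights are distributed as $X_{1/n}$; since $P_n \Rightarrow \delta_0$ as $n \to \infty$, these are exactly the ``large number of closely spaced and very short impulses'' of Bode and Shannon. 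The L\'evy exponent of $w_n$ is
\begin{equation*}
	f_n(\xi) = \lambda_n \int_{\R} \bigl( \ee^{\ii \xi a} - 1 \bigr) \, P_n(\drm a) = n \bigl( \ee^{f(\xi)/n} - 1 \bigr).
\end{equation*}

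Next I would pass to the limit. For each fixed $\xi \in \R$ one has $f_n(\xi) = n\bigl(\ee^{f(\xi)/n} - 1\bigr) \to f(\xi)$; moreover $\mathrm{Re}\, f(\xi) \le 0$, so the elementary inequality $|\ee^z - 1| \le |z|$ for $\mathrm{Re}\, z \le 0$ gives $|f_n(\xi)| \le |f(\xi)| \le C(1 + \xi^2)$ uniformly in $n$, the last bound following from the L\'evy--Khintchine representation. For a fixed Schwartz test function $\varphi$ the map $\bm{x} \mapsto 1 + \varphi(\bm{x})^2$ is integrable, so dominated convergence yields $\int_{\R^d} f_n(\varphi(\bm{x})) \, \drm \bm{x} \to \int_{\R^d} f(\varphi(\bm{x})) \, \drm \bm{x}$, i.e.\ $\CF_{w_n}(\varphi) \to \CF_w(\varphi)$ for every $\varphi$, and the continuity theorem then closes the argument.

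The hard part is not this convergence---which, once phrased through characteristic functionals, is just the classical fact that every infinitely divisible law is a weak limit of compound-Poisson laws---but the functional-analytic bookkeeping that makes the statement meaningful in the present setting. One must verify that each $P_n$ keeps the tail integrability of $\nu$ under which a compound-Poisson noise defines a genuine random element of $\S'(\R^d)$, so that $w_n$ is a legitimate generalized Poisson process and $s_n = \Lop^{-1} w_n$ a legitimate random $\Lop$-spline; that $\Lop^{-1\ast}$ maps admissible test functions into admissible test functions for the operators $\Lop$ considered here, so that the reduction to noise convergence is valid; and that the invoked form of L\'evy's continuity theorem applies---here tightness comes for free because the limiting functional $\varphi \mapsto \exp\bigl(\int_{\R^d} f(\varphi(\bm{x})) \, \drm \bm{x}\bigr)$ is continuous at the origin. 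I would also note that the more naive truncation $\nu \mapsto \nu|_{\{|a| > 1/n\}}$ fails in general: the discarded compensator $\ii\xi \int_{1/n < |a| \le 1} a \, \nu(\drm a)$ need not converge, which is precisely why routing the construction through the semigroup laws $P_n$ is the robust choice.
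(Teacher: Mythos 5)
Your overall architecture matches the paper's: the same sequence of exponents $f_n(\xi)=n(\ee^{f(\xi)/n}-1)$, the same reduction of the convergence of $s_n=\Lop^{-1}w_n$ to the convergence of $\CF_{w_n}$ at the transformed test functions, and the same appeal to the nuclear-space L\'evy continuity theorem. However, the dominated-convergence step as you wrote it fails. You dominate $\lvert f_n(\xi)\rvert\leq\lvert f(\xi)\rvert\leq C(1+\xi^2)$ and then claim that $\bm{x}\mapsto 1+\varphi(\bm{x})^2$ is integrable; it is not, since the constant $1$ is not in $L_1(\R^d)$. A bound on $f$ that does not vanish at $\xi=0$ cannot produce an integrable dominating function after composition with a decaying test function. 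Worse, the function at which you actually need the convergence is not a Schwartz function but $\Top\{\varphi\}=\Lop^{-1\ast}\varphi$, which in general only lies in $L_{p_{\min}}(\R^d)\cap L_{p_{\max}}(\R^d)$ with possibly $p_{\max}<2$; even dropping the constant, a quadratic bound would then still not be integrable. The correct dominating function comes from the estimate $\lvert f(\xi)\rvert\leq\nu_1\lvert\xi\rvert^{p_{\min}}+\nu_2\lvert\xi\rvert^{p_{\max}}$ valid for $(p_{\min},p_{\max})$-exponents (the paper's Proposition~\ref{prop:technical}), which is exactly calibrated so that $\lvert f(\Top\{\varphi\}(\cdot))\rvert\in L_1(\R^d)$. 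Your pointwise bound $\lvert\ee^z-1\rvert\leq\lvert z\rvert$ for $\Re z\leq 0$ is fine (indeed slightly sharper than the $\sqrt2\lvert z\rvert$ the paper uses), but it must be combined with this power-law estimate, not with $C(1+\xi^2)$.

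The second item you flag but do not resolve is also a real obligation rather than bookkeeping: one must show that each $w_n$ is compatible with $\Lop$, i.e.\ that $f_n$ is again a $(p_{\min},p_{\max})$-exponent, so that $s_n=\Lop^{-1}w_n$ exists as a generalized random process and $\CF_{s_n}(\varphi)=\CF_{w_n}(\Top\{\varphi\})$ makes sense. This is the content of the paper's Proposition~\ref{prop:Poisson_exponent}, whose proof requires relating the moments of the compound-Poisson law $P_{f/n}$ to the tail of the L\'evy measure $V$ of $f$ (via the equivalence $\mathbb{E}[\lvert Z\rvert^p]<\infty\Leftrightarrow\int_{\lvert t\rvert\geq1}\lvert t\rvert^pV_Z(\drm t)<\infty$ and a subadditivity estimate for $\lvert\sum_i X_i\rvert^p$). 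Without this, the claimed identity $\CF_{s_n}(\varphi)=\CF_{w_n}(\Top\{\varphi\})$ and hence the whole reduction is unjustified.
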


We shall see that the convergence procedure is based on a coupled increase of the average density and a decrease of the intensity of the impulses of the Poisson noises. This is in the spirit of Bode and Shannon's quote and is, in fact, true for any L\'evy noise. 

\subsection{Connection with Related Works}
Random processes and random fields are notorious tools to model uncertainty and statistics of signals~\cite{Vetterli2014foundations}. Gaussian processes are by far the most studied stochastic models because of their fundamental properties (\textit{e.g.}, stability, finite variance, central-limit theorem) and their relative ease of use. They are the principal actors within the ``classical'' paradigm in statistical signal processing~\cite{Unser2014sparse}. Many fractal-type signals are modeled as self-similar Gaussian processes~\cite{Mandelbrot1968,Mandelbrot1982fractal,Pesquet2002stochastic,Blu2007self}. 
However, many real-world signals are empirically observed to be inherently sparse, a property that is incompatible with Gaussianity~\cite{Mumford2000,Mumford2010pattern,Srivastava2003advances}. 
In order to overcome the limitations of Gaussian model, several other stochastic models has been proposed for the study of sparse signals. They include infinite-variance~\cite{Pesquet2002stochastic,Nikias1995signal} or piecewise-constant models~\cite{Mumford2000,Unser2011stochastic}.

In this paper, we model signals as continuous-domain random processes defined over $\R^d$ that are solution of a differential equation driven by L\'evy noise. These processes are called generalized L\'evy processes.
We thus follow the general approach of~\cite{Unser2014sparse} which includes the models mentioned above.
The common feature of these processes is that their probability distributions are always infinitely divisible, meaning that they can be decomposed as sums of any length of independent and identically distributed random variables. 
Infinite divisibility is a key concept of continuous-domain random processes~\cite{Sato1994levy} and will be at the heart of our work. In order to embrace the largest variety of random models, we rely on the framework of generalized random processes, which is the probabilistic version of the theory of generalized functions of L. Schwartz~\cite{Schwartz1966distributions}. Initially introduced independently by K. It{\={o}}~\cite{Ito1954distributions} and I. Gelfand~\cite{Gelfand1955generalized}, it has been developed extensively by these two authors in~\cite{GelVil4} and~\cite{Ito1984foundations}.

Several behaviors can be observed within this extended family of random processes. For instance, self-similar Gaussian processes   exhibit fractal behaviors. In one dimension, they include the fractional Brownian motion~\cite{Mandelbrot1968} and its higher-order extensions~\cite{Perrin2001nth}. In higher dimensions, our framework covers the family of fractional Gaussian fields~\cite{TaftifBvf,Lodhia2016fractional,Bierme2010} and finite-variance self-similar fields that appear to converge to fractional Gaussian fields at large scales~\cite{Fageot2015wavelet}. 
The self-similarity property is also compatible with the family of $\alpha$-stable processes~\cite{Taqqu1994stable} which have the particularity of having unbounded variances or second-order moments (when non-Gaussian). 
More generally, every process considered in our framework is part of the L\'evy family, including Laplace processes~\cite{Koltz2001laplace} and Student's processes~\cite{Grigelionis2013student}.
Upon varying the operator $\Lop$, one recovers L\'evy processes~\cite{Applebaum2009levy}, CARMA processes~\cite{Brockwell2010carma,Brockwell2000CARMA}, and their multivariate generalizations~\cite{Unser2014sparse,Durand2012multifractal}. 
Unlike those examples, the compound-Poisson processes, although members of the L\'evy family, are piecewise-constant and have a finite rate of innovation (FRI) in the sense of~\cite{Vetterli2002FRI}. For a signal, being FRI means that a finite quantity of information is sufficient to reconstruct it over a bounded domain.

The present paper is an extension of our previous contribution \cite{Fageot2015SampTA}\footnote{In this preliminary work, we had restricted our study to the family of CARMA L\'evy processes in dimension $d=1$ and showed that they are limit in law of CARMA Poisson processes. Here, we extend our preliminary result in several ways: The class of processes we study now is much more general since we consider arbitrary operators; moreover, we include multivariate random processes, often called random fields. Finally, our preliminary report contained a mere sketch of the proof of~\cite[Theorem 8]{Fageot2015SampTA}, while the current work is complete in this respect.}. 
We believe that Theorem~\ref{theo:main} is relevant for the conceptualization of random processes that are solution of linear SDE. Starting from the \mbox{$\Lop$-spline} interpretation of generalized Poisson processes, the statistics of a more general process can be understood as a limit of the statistics of random \mbox{$\Lop$-splines}. In general, the studied processes that are solution of \eqref{eq:SDE} do not have a finite rate of innovation, unless the underlying white noise is Poisson. The convergence result helps us understand why non-Poisson processes do not have a finite rate of innovation: They correspond to infinitely many impulses per unit of time as they can be approximated by FRI processes with an increasing and asymptotically infinite rate of innovation.

Interesting connections can also be drawn with some classical finite-dimension convergence results in probability theory. 
As mentioned earlier, there is a direct correspondence between L\'evy white noises and infinitely divisible random variables. It is well known that any infinitely divisible random variable is the limit in law of a sequence of compound-Poisson random variables~\cite[Corollary 8.8]{Sato1994levy}).
Theorem~\ref{theo:main} is the generalization of this result from real random variables to random processes that are solution of a linear SDE. 

	\subsection{Outline}

The paper is organized as follows: In Sections~\ref{sec:splines} and ~\ref{sec:processes}, we introduce the concepts of $\Lop$-splines and generalized L\'evy processes, respectively.
A special emphasis on generalized Poisson processes is given in Section~\ref{sec:Poisson} as they both embrace generalized L\'evy processes and (random) $\Lop$-splines.
Our main contribution is Theorem~\ref{theo:main}; it is proven in Section~\ref{sec:proof}. Section~\ref{sec:simulations} contains illustrative examples in the one-  and two-dimensional settings, followed by concluding remarks in Section~\ref{sec:conclusion}.

\section{Nonuniform $\Lop$-Splines} \label{sec:splines}

We denote by $\S(\R^d)$ the space of rapidly decaying functions from $\R^d$ to $\R$. Its topological dual is $\S'(\R^d)$, the Schwartz space of tempered generalized function \cite{Schwartz1966distributions}.
We denote by $\langle u,\varphi \rangle$ the duality product between $u\in \S'(\R^d)$ and $\varphi \in \S(\R^d)$.
A linear and continuous operator $\Lop$ from $\S(\R^d)$ to $\S'(\R^d)$ is \emph{spline-admissible} if
\begin{itemize}
\item it is shift-invariant, meaning that 
\begin{equation}
\Lop \{\varphi (\cdot - \bm{x}_0) \} = \Lop\{ \varphi\} (\cdot - \bm{x}_0)
\end{equation} 
for every $\varphi \in \S(\R^d)$ and $\bm{x}_0\in \R^d$; and
\item there exists a measurable function of slow growth $\rho_{\Lop}$ such that 
\begin{equation}
\Lop \{\rho_{\Lop} \} = \delta
\end{equation} 
with $\delta$ the Dirac delta function. The function $\rho_{\Lop}$ is a \emph{Green's function} of $\Lop$. 
\end{itemize}

\begin{definition} \label{def:Lsplines}
	Let $\Lop$ be a spline-admissible operator with measurable Green's function $\rho_{\Lop}$. 
	A \emph{nonuniform $\Lop$-spline} with knots $(\bm{x}_k)$ and weights $(a_k)$ is a function $s$ such that
	\begin{equation} \label{eq:Lsplines}
		\Lop s = \sum_{k= 0}^\infty a_k \delta (\cdot - \bm{x}_k).
	\end{equation}
\end{definition}
Definition~\ref{def:Lsplines} implies that the generic expression for a nonuniform $\Lop$-spline is 
\begin{equation}
	s =  p_0 + \sum_{k \in \Z} a_k \rho_{\Lop}(\cdot - \bm{x}_k)
\end{equation}
with $p_0$ in the null space of $\Lop$ (\textit{i.e.}, $\Lop p_0 = 0$).
Indeed, we have, by linearity and shift-invariance of $\Lop$, that
 \begin{equation} 
 \Lop \Big\{ s - \sum_{k\in \Z} a_k \rho_{\Lop}(\cdot - \bm{x}_k) \Big\} = \Lop s -\sum_{k\in \Z} a_k \delta(\cdot - \bm{x}_k) = 0.
 \end{equation} 
Therefore, $\left(s - \sum_{k\in \Z} a_k \rho_{\Lop}(\cdot - \bm{x}_k)\right)$ is in the null space of $\Lop$.

\begin{table*}[t!] 
\centering
\caption{Some families of spline-admissible operators}

\begin{tabular}{ccccccc} 
\hline
\hline 
Dimension & Operator & Parameter &  $\rho_\Lop(\bm{x})$ & Spline type &  References
\\
\hline\\[-1ex]
1 & $\Dop^N$ & $N\in \N\backslash\{0\}$ &  $\frac{1}{(N-1)!} x^{N-1} u(x) $ & B-splines & \cite{Unser1999splines,Schoenberg1973cardinal} \\
1 &$(\Dop + \alpha \Id)$ & $\alpha \in \C, \Re(\alpha) > 0$  & $\mathrm{e}^{- \alpha x}u(x) $& E-splines & \cite{Unser2005cardinal}\\
1 &$\Dop^\gamma$ & $\gamma> 0$ &  $\frac{1}{\Gamma(\gamma)} x^{\gamma -1} u(x) $  &fractional splines &  \cite{Unser2000fractional,Unser2007self}\\
$d$ & $\Dop_{x_1} \cdots \Dop_{x_d} $ & -  & $u (\bm{x}) = \prod_{i=1}^d u(x_i)$ & separable splines & \cite{Unser2014sparse}\\
$d$ &$(-\Delta)^{m /2}$& $m -d \in 2\N$ & $c_{m,d} \lVert \bm{x} \rVert^{m-d} \log \lVert \bm{x} \rVert $ & cardinal polyharmonic splines & \cite{Madych1990polyharmonic}\\
$d$ &$(-\Delta)^{\gamma /2}$& $\gamma - d \in \R^+ \backslash 2 \N $ & $c_{\gamma,d} \lVert \bm{x} \rVert^{\gamma-d}$ & fractional polyharmonic splines & \cite{VDV2005polyharmonic}\\
\hline
\hline
\end{tabular} \label{table:Lsplines}
\end{table*}

We summarize in Table~\ref{table:Lsplines} 
important families of operators with their corresponding Green's function and the associated family of $\Lop$-splines. 
The Heaviside function is denoted by $u$. 
The large variety of proposed splines illustrates the generality of our result.

\section{Generalized L\'evy Processes} \label{sec:processes}

In this section, we briefly introduce the main tools and concepts for the characterization of sparse processes. For a more comprehensive description, we refer the reader to \cite{Unser2014sparse}.
First, let us recall that a real random variable $X$ is a measurable function from a probability space $(\Omega,\mathcal{A}, \mathscr{P})$ to $\R$, endowed with the Borelian $\sigma$-field. The law of $X$ is the probability measure on $\R$ such that $\mathscr{P}_X([a,b]) = \mathscr{P}(a \leq X(\omega) \leq b)$.  The characteristic function of $X$ is the (conjugate) Fourier transform of $\mathscr{P}$. For $\xi \in \R$,  it is 
 \begin{equation} 
 \CF_X(\xi) = \int_{\R} \ee^{\ii \xi x} \drm \mathscr{P}_X (x) =  \mathbb{E} [\ee^{\ii X \xi}].
  \end{equation}

	\subsection{Generalized Random Processes} 
	
Generalized L\'evy processes are defined in the framework of generalized random processes \cite{GelVil4}, which is the stochastic counterpart of the theory of generalized functions. \\

	\subsubsection{Random Elements in $\S'(\R^d)$}\label{sec:randomels}
We first define the \emph{cylindrical $\sigma$-field} on the Schwartz space $\S'(\R^d)$, denoted by $\mathcal{B}_c(\S'(\R^d))$,  as the $\sigma$-field generated by the cylinders
\begin{equation}
	\left\{  v \in \S'(\R^d), \quad (\langle v , \varphi_1\rangle, \ldots ,   \langle v  , \varphi_N\rangle  ) \in B \right\},
\end{equation}
where $N \geq 1$, $\varphi_1, \ldots , \varphi_N \in \S(\R^d)$, and $B$ is a Borelian subset of $\R^N$. 
\begin{definition}
A \emph{generalized random process} is the measurable function
\begin{align}
	s  \ :  \  (\Omega, \mathcal{A}) 	\rightarrow 	(\S'(\R^d) ,\mathcal{B}_c(\S'(\R^d))) .
 \end{align}
The \emph{law} of $s$ is then the probability measure $\mathscr{P}_s$ on $\S'(\R^d)$, image of $\mathscr{P}$ by $s$.
The \emph{characteristic functional} of $s$ is the Fourier transform of its probability law, defined for $\varphi \in \S(\R^d)$ by 
\begin{equation} \label{eq:CF}
\CF_s(\varphi) = \int_{\S'(\R^d)}  \ee^{\ii \langle v, \varphi \rangle} \drm \mathscr{P}_s(v) = \mathbb{E} [\ee^{\ii \langle s, \varphi \rangle}].
\end{equation} 
\end{definition}

A generalized process $s$ is therefore a random element in $\S'(\R^d)$. In particular, we have that
\begin{itemize}
	\item for every $\omega \in \Omega$,  the functional $\varphi \mapsto \langle s(\omega) , \varphi \rangle$ is in $ \S'(\R^d)$; and
	\item for every $\varphi_1, \ldots \varphi_N \in \S(\R^d)$, 
			\begin{equation}\label{eq:YY}\omega \mapsto \bm{Y} = \left( \langle s(\omega) ,\varphi_1\rangle, \ldots , \langle s(\omega), \varphi_N\rangle \right)
			\end{equation}
			is a random vector whose characteristic functions
			\begin{equation}
			\CF_{\bm{Y}}(\bm{\xi}) = \CF_s(\xi_1 \varphi_1 + \cdots + \xi_N \varphi_N)
			\end{equation}
			for every $\bm{\xi} = (\xi_1, \ldots , \xi_N) \in \R^N$.
\end{itemize}
The probability density functions (pdfs) of the random vectors $\bm{Y}$ in \eqref{eq:YY} are the finite-dimensional marginals of $s$. We shall omit the reference to $\omega \in \Omega$ thereafter. \\
   
   	\subsubsection{Abstract Nuclear Spaces} 

We recall that function spaces are locally convex spaces, generally infinite-dimensional, whose elements are functions. 
To quote A. Pietsch in \cite{Pietsch1972nuclear}:
\textit{``The locally convex spaces encountered in analysis can be divided into two classes. First, there are the normed spaces (...). The second class consists of the so-called nuclear locally convex spaces."}
Normed spaces and nuclear spaces are mutually exclusive in infinite dimension  \cite[Corollary 2, pp. 520]{Treves1967}. 
The typical example of nuclear function space is the Schwartz space $\S(\R^d)$ \cite[Corollary, pp. 530]{Treves1967}; see also \cite{Ito1984foundations}.

The theory of nuclear spaces was introduced by A. Grothendieck in \cite{Grothendieck1955produits}. The required formalism is more demanding than the simpler theory of Banach spaces. The payoff is that fundamental results of finite-dimensional probability theory can be directly extended to nuclear spaces, while such generalizations are not straightforward for Banach spaces. 

Let $\mathcal{N}$ be a nuclear space and $\mathcal{N}'$ its topological dual. As we did for $\S'(\R^d)$ in Section~\ref{sec:randomels}, we define a generalized random process on $\mathcal{N}$ as a random variable $s$ from $\Omega$ to $\mathcal{N}'$, endowed with the cylindrical $\sigma$-field $\mathcal{B}_c(\mathcal{N}')$. The law of $s$ is the image of $\mathscr{P}$ by $s$ and is a probability measure on $\mathcal{N}'$. The characteristic functional of $s$ is $\CF_s(\varphi) = \mathbb{E}[\ee^{\ii \langle s ,\varphi \rangle}]$, defined for $\varphi\in \mathcal{N}$. \\

		\subsubsection{Generalized Bochner and L\'evy Theorems} 

First, we recall the two fundamental theorems that support the use of the characteristic function in probability theory. 

\begin{proposition}[Bochner theorem]\label{thm:bochner}
A function $\CF : \R \rightarrow \C$ is the characteristic function of some random variable $X$ if and only if $\CF$ is continuous, positive-definite, and satisfies 
\begin{equation}
\CF(0) = 1.
\end{equation}
\end{proposition}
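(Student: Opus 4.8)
The plan is to establish the two implications separately: the forward direction is routine, while the converse requires an explicit construction of the measure. For necessity, suppose $\CF(\xi) = \mathbb{E}[\ee^{\ii X \xi}]$ for some random variable $X$. Then $\CF(0) = 1$ is immediate, continuity follows from dominated convergence (the integrand is bounded by $1$ and converges pointwise as $\xi \to \xi_0$), and for all $\xi_1, \dots, \xi_N \in \R$ and $c_1, \dots, c_N \in \C$ one has
\begin{equation}
\sum_{j,k=1}^N c_j \overline{c_k}\, \CF(\xi_j - \xi_k) = \mathbb{E}\Big[ \Big| \sum_{j=1}^N c_j \ee^{\ii X \xi_j} \Big|^2 \Big] \geq 0 ,
\end{equation}
which is exactly positive-definiteness.

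The substance lies in the converse. Assuming $\CF$ continuous, positive-definite, and $\CF(0) = 1$, I would first record the elementary facts extracted from $2 \times 2$ submatrices, namely $\CF(-\xi) = \overline{\CF(\xi)}$ and $|\CF(\xi)| \leq \CF(0) = 1$. For each $T > 0$, I would then introduce
\begin{equation} \label{eq:pTdef}
p_T(x) = \frac{1}{2\pi} \int_{-T}^T \Big( 1 - \frac{|u|}{T} \Big) \CF(u)\, \ee^{-\ii x u}\, \drm u = \frac{1}{2\pi T} \int_0^T \int_0^T \CF(s - t)\, \ee^{-\ii x (s - t)}\, \drm s\, \drm t ,
\end{equation}
the two expressions being linked by the substitution $u = s - t$. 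Partitioning $[0,T]$ and approximating the double integral by Riemann sums of the form $\frac{T}{2\pi n^2} \sum_{j,k} c_j \overline{c_k}\, \CF(s_j - s_k)$ with $c_j = \ee^{-\ii x s_j}$, one sees that each such sum is nonnegative by positive-definiteness; since continuity of $\CF$ makes the integrand continuous, passing to the limit gives $p_T(x) \geq 0$ for every $x \in \R$.

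Next I would verify that $p_T$ is a probability density and identify its law. As the integrand in \eqref{eq:pTdef} has compact support, $p_T$ is bounded and continuous but not obviously in $L^1(\R)$, so I would regularize with a Gaussian: exchanging the order of integration gives
\begin{equation}
\int_{\R} p_T(x)\, \ee^{-\sigma^2 x^2/2}\, \drm x = \frac{1}{\sqrt{2\pi}\,\sigma} \int_{-T}^T \Big( 1 - \frac{|u|}{T} \Big) \CF(u)\, \ee^{-u^2/(2\sigma^2)}\, \drm u ,
\end{equation}
whose right-hand side tends to $\CF(0) = 1$ as $\sigma \to 0$ because the kernel is an approximate identity and $\CF$ is continuous at the origin; since $\ee^{-\sigma^2 x^2/2} \uparrow 1$ and $p_T \geq 0$, monotone convergence identifies the left-hand side with $\int_{\R} p_T$, so $\int_{\R} p_T = 1$. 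The same regularization applied to $\ee^{\ii \xi x} p_T(x)$ shows that the probability measure $\drm \mu_T = p_T(x)\, \drm x$ has characteristic function $\widehat{\mu_T}(\xi) = (1 - |\xi|/T)_+\, \CF(\xi)$. Letting $T \to \infty$, then $\widehat{\mu_T}(\xi) \to \CF(\xi)$ pointwise, and since $\CF$ is continuous at $0$ with $\CF(0) = 1$, the classical tail bound $\mu_T(\{ |x| > 2/\delta \}) \leq \frac{1}{\delta} \int_{-\delta}^{\delta} (1 - \Re\, \widehat{\mu_T}(\xi))\, \drm \xi$ makes $\{ \mu_T \}_{T > 0}$ uniformly tight; extracting a weakly convergent subsequence $\mu_{T_n} \Rightarrow \mu$ and passing to the limit in characteristic functions yields $\widehat{\mu} = \CF$, so $\CF$ is the characteristic function of a random variable with law $\mu$.

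The hard part is the converse, and within it the two places where positivity genuinely enters: deriving $p_T \geq 0$ from the finite-sum positive-definiteness --- this is precisely what forces the Riemann-sum approximation and hence the use of the continuity hypothesis --- and controlling the total mass of $p_T$ via the Gaussian regularization, since a priori $p_T \notin L^1(\R)$. The concluding tightness and weak-compactness step is comparatively standard.
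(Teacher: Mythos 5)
Your proof is correct. Note, however, that the paper does not prove this statement at all: Proposition~\ref{thm:bochner} is recalled there purely as classical background (alongside the L\'evy continuity theorem) to motivate its infinite-dimensional analogues, the Minlos--Bochner and Fernique--L\'evy theorems, so there is no in-paper argument to compare against. What you have written is the standard textbook proof of Bochner's theorem: the forward direction by expanding $\mathbb{E}\bigl[\,\lvert\sum_j c_j \ee^{\ii X\xi_j}\rvert^2\bigr]$, and the converse via the Fej\'er-kernel construction $p_T \geq 0$ (where positive-definiteness enters through Riemann sums and continuity justifies the passage to the limit), the Gaussian regularization to establish $\int p_T = 1$ and to identify $\widehat{\mu_T}(\xi) = (1-\lvert\xi\rvert/T)_+\,\CF(\xi)$, and finally the truncation inequality to get tightness, Prokhorov to extract a weak limit, and uniqueness of characteristic functions to identify it. All steps are sound; in particular you correctly flag the two non-routine points, namely that $p_T$ is not a priori integrable (hence the regularization) and that continuity of $\CF$ at $0$ is exactly what drives the tightness estimate.
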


\begin{proposition}[L\'evy theorem]\label{thm:levy}
Let $(X_n)_{n\in \N}$ and $X$ be real random variables. The sequence $X_n$ converges in law to $X$ if and only if for all $\xi \in \R$
\begin{equation}
	\CF_{X_n} (\xi)  \underset{n\rightarrow \infty}{\longrightarrow} \CF_X(\xi),
\end{equation}
where $\CF_{X_n}$ and $\CF_X$ are respectively the characteristic functions of $X_n$ and $X$. 
\end{proposition}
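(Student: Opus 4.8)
\emph{Proof strategy.} The plan is to treat the two implications separately, the direct one being immediate and the converse carrying all the work. If $X_n$ converges in law to $X$, then for each fixed $\xi \in \R$ the maps $x \mapsto \cos(\xi x)$ and $x \mapsto \sin(\xi x)$ are bounded and continuous, so the definition of convergence in law gives $\CF_{X_n}(\xi) = \mathbb{E}[\ee^{\ii \xi X_n}] \to \mathbb{E}[\ee^{\ii \xi X}] = \CF_X(\xi)$.

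For the converse, assume $\CF_{X_n}(\xi) \to \CF_X(\xi)$ for every $\xi \in \R$. The first, and crucial, step is to prove that the family $(X_n)$ is \emph{tight}. I would use the elementary truncation inequality
\begin{equation*}
\mathscr{P}\Bigl( |X_n| \geq \tfrac{2}{u} \Bigr) \leq \frac{1}{u} \int_{-u}^{u} \bigl( 1 - \CF_{X_n}(\xi) \bigr) \, \drm \xi, \qquad u > 0,
\end{equation*}
which follows from Fubini's theorem applied to $\int_{-u}^{u}(1 - \ee^{\ii \xi x})\,\drm\xi = 2u\bigl(1 - \tfrac{\sin(ux)}{ux}\bigr)$ together with the bound $1 - \tfrac{\sin t}{t} \geq \tfrac12$ for $|t| \geq 2$. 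Since $|1 - \CF_{X_n}| \leq 2$ uniformly on $[-u,u]$ and $\CF_{X_n} \to \CF_X$ pointwise, dominated convergence yields $\tfrac1u\int_{-u}^{u}(1 - \CF_{X_n}) \to \tfrac1u\int_{-u}^{u}(1 - \CF_X)$; and because $\CF_X$ is continuous at the origin with $\CF_X(0) = 1$ by the Bochner theorem (Proposition~\ref{thm:bochner}), the right-hand side can be made arbitrarily small by first choosing $u$ small, then $n$ large. Absorbing the finitely many remaining indices then produces, for every $\varepsilon > 0$, a compact interval $K_\varepsilon \subset \R$ with $\sup_n \mathscr{P}(X_n \notin K_\varepsilon) \leq \varepsilon$, which is tightness.

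Once tightness holds, I would invoke Prokhorov's theorem: every subsequence of $(X_n)$ admits a further subsequence converging in law to some random variable $Y$. Applying the already-proven direct implication along that sub-subsequence gives $\CF_{X_{n_{k_j}}} \to \CF_Y$ pointwise; but the same sequence converges to $\CF_X$ by hypothesis, so $\CF_Y = \CF_X$, and the uniqueness theorem for characteristic functions (injectivity of the Fourier transform on finite measures) forces $Y$ to have the law of $X$. Hence every subsequence of $(X_n)$ has a further subsequence converging in law to $X$; since convergence in law is metrizable (e.g.\ by the L\'evy metric), the whole sequence $(X_n)$ converges in law to $X$.

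The main obstacle is the tightness step: it is the only place requiring a genuine quantitative estimate, and its subtlety is to combine the pointwise convergence $\CF_{X_n} \to \CF_X$ with the uniform bound $|\CF_{X_n}| \leq 1$ (the source of domination) and with the regularity of $\CF_X$ at the origin (exactly what Bochner's theorem provides). The remaining ingredients---Prokhorov's theorem, uniqueness of characteristic functions, and the subsequence principle---are standard and purely structural.
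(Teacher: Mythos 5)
Your proof is correct: this is the standard textbook argument for the L\'evy continuity theorem (tightness via the truncation inequality, Prokhorov's theorem, injectivity of the Fourier transform on probability measures, and the subsequence principle), and the paper itself states this proposition as a recalled classical fact without proof, so there is nothing to compare against. One small remark: you do not need Bochner's theorem to get $\CF_X(0)=1$ and continuity of $\CF_X$ at the origin --- since $X$ is already given as a random variable, both follow directly from dominated convergence applied to $\mathbb{E}[\ee^{\ii\xi X}]$; Bochner would only be needed in the stronger version of the theorem where the pointwise limit is an arbitrary function assumed continuous at $0$.
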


The infinite-dimensional generalizations of Propositions~\ref{thm:bochner} and~\ref{thm:levy} were achieved during the 60s and the 70s, and are effective for nuclear spaces only. 
See the introduction of~\cite{Mushtari1996} for a general discussion on this subject.

Initially conjectured by Gelfand, the so-called Minlos-Bochner theorem was proved by Minlos~\cite{Minlos1959generalized}. See also~\cite[Theorem 3, Section III-2.6]{GelVil4}.
\begin{theorem} [Minlos-Bochner theorem]
\label{theo:MB}
Let $\mathcal{N}$ be a nuclear space. 
The functional $\CF$ from $\mathcal{N}$ to $\C$ is the characteristic functional of a generalized random process $s$ on $\mathcal{N}'$ if and only if $\CF$ is continuous, positive-definite, and satisfies 
\begin{equation}
\CF(0) = 1. 
\end{equation}
\end{theorem}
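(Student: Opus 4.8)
The plan is to prove the two implications separately; the forward implication is routine, while the converse is where the nuclear structure of $\mathcal{N}$ enters in an essential way.

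\textbf{Necessity.} Suppose $\CF = \CF_s$ for a generalized random process $s$ on $\mathcal{N}'$. Then $\CF(0) = \mathbb{E}[\ee^{\ii \langle s, 0\rangle}] = 1$. For positive-definiteness, fix $\varphi_1, \dots, \varphi_N \in \mathcal{N}$ and $c_1, \dots, c_N \in \C$; then $\sum_{j,k} c_j \overline{c_k}\, \CF(\varphi_j - \varphi_k) = \mathbb{E}\big[\,\big|\sum_j c_j \ee^{\ii \langle s, \varphi_j\rangle}\big|^2\,\big] \geq 0$. Continuity follows from dominated convergence: if $\varphi_n \to \varphi$ in $\mathcal{N}$, then $\langle s(\omega), \varphi_n\rangle \to \langle s(\omega), \varphi\rangle$ for every $\omega$ (since $s(\omega)\in\mathcal{N}'$ is continuous), so $\ee^{\ii\langle s, \varphi_n\rangle} \to \ee^{\ii\langle s,\varphi\rangle}$ pointwise with modulus bounded by $1$, and the integrals converge to $\CF_s(\varphi)$.

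\textbf{Sufficiency, step 1: a cylindrical measure from Bochner.} Given $\CF$ continuous, positive-definite with $\CF(0)=1$, for each finite family $\varphi_1, \dots, \varphi_N \in \mathcal{N}$ consider $\bm\xi = (\xi_1,\dots,\xi_N) \mapsto \CF(\xi_1\varphi_1 + \cdots + \xi_N\varphi_N)$ on $\R^N$. It is continuous, positive-definite, and equals $1$ at the origin, so the classical Bochner theorem (Proposition~\ref{thm:bochner}, in its $\R^N$ version) produces a probability measure $\mu_{\varphi_1,\dots,\varphi_N}$ on $\R^N$ with this characteristic function. Linearity of $\bm\xi \mapsto \xi_1\varphi_1 + \cdots + \xi_N\varphi_N$ makes the family $\{\mu_{\varphi_1,\dots,\varphi_N}\}$ consistent under coordinate projections and under linear reparametrizations of the $\varphi_k$, so Kolmogorov's extension theorem yields a finitely additive set function on the cylinder algebra of the algebraic dual $\mathcal{N}^*$ whose finite-dimensional images are the $\mu_{\varphi_1,\dots,\varphi_N}$; equivalently, a cylindrical probability measure whose Fourier transform, restricted to finite-dimensional subspaces, is $\CF$.

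\textbf{Sufficiency, step 2: $\sigma$-additivity and concentration on $\mathcal{N}'$.} This is the crux, and the only place where nuclearity is used. Realize the topology of $\mathcal{N}$ by an increasing sequence of Hilbertian seminorms $\|\cdot\|_p$ such that, for every $p$, there is $q > p$ for which the canonical map between the associated Hilbert spaces $\mathcal{N}_q \to \mathcal{N}_p$ is Hilbert--Schmidt. Continuity of $\CF$ at $0$ gives, for each $\varepsilon > 0$, an index $p$ and $\delta>0$ with $|\CF(\varphi) - 1| \leq \varepsilon$ whenever $\|\varphi\|_p \leq \delta$. Picking $q>p$ with $\mathcal{N}_q \to \mathcal{N}_p$ Hilbert--Schmidt, one integrates the inequality $1 - \mathrm{Re}\,\CF(\varphi) \geq c$ against suitable centered Gaussian cylinder measures on the finite-dimensional sections of $\mathcal{N}_p$; the Hilbert--Schmidt norm of the embedding makes the relevant series converge and produces a bound on the cylindrical mass of the complements of balls $\{\,\|v\|_{-q} \leq R\,\}$ in the dual Hilbert space $\mathcal{N}_{-q} = \mathcal{N}_q'$, uniform over cylinders and tending to $0$ as $R\to\infty$. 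A Prokhorov-type tightness argument on the Hilbert scale then upgrades the cylindrical measure to a genuine countably additive Radon probability measure $\mathscr{P}_s$ carried by $\mathcal{N}_{-q} \subset \mathcal{N}'$. Its characteristic functional agrees with $\CF$ on every finite-dimensional subspace by construction, hence on all of $\mathcal{N}$, so $\CF = \CF_s$. The main obstacle is precisely this last step: without the Hilbert--Schmidt (nuclear) embedding, the cylindrical measure need not be $\sigma$-additive on $\mathcal{N}'$, and the tightness estimate must be carried out with care; the full argument is due to Minlos~\cite{Minlos1959generalized}, see also~\cite[Section~III-2.6]{GelVil4}.
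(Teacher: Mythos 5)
The paper does not actually prove this theorem: it is quoted as a classical result, with the proof deferred to Minlos~\cite{Minlos1959generalized} and to~\cite[Theorem 3, Section III-2.6]{GelVil4}, which is precisely the argument you outline (finite-dimensional Bochner plus Kolmogorov consistency to build a cylinder measure on the dual, then the Hilbert--Schmidt embeddings supplied by nuclearity to obtain the tightness estimate that upgrades it to a countably additive measure on $\mathcal{N}'$). Your sketch is a correct outline of that standard proof, with the understanding that the quantitative Gaussian-averaging estimate in your step 2 is described rather than carried out, and that for the necessity part the dominated-convergence argument gives sequential continuity, which suffices here since the only space used in the paper, $\mathcal{N}=\S(\R^d)$, is Fr\'echet.
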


The generalization of the L\'evy theorem for nuclear spaces was obtained in~\cite[Theorem III.6.5]{Fernique1967processus} and is not as widely known as it should be. 
A sequence $(s_n)_{n\in \N}$ of generalized random processes in $\mathcal{N}'$ is said to converge in law to $s$, which we denote by $s_n \overset{(d)}{\underset{n\rightarrow\infty}{\longrightarrow}} s$ , if the underlying probability measures $\CF_{s_n}$ converge weakly to $\CF_s$, in such a way that
\begin{equation}
	\int_{\S'(\R^d)} f ( v ) \drm \CF_{s_n}(v) \underset{n\rightarrow \infty}{\longrightarrow}\int_{\S'(\R^d)} f ( v ) \drm \CF_{s}(v)
\end{equation}
for any continuous bounded function $f : \S'(\R^d) \rightarrow \R$.

\begin{theorem}[Fernique-L\'evy theorem] \label{theo:Levy}
Let $\mathcal{N}$ be a nuclear space. 
Let $(s_n)_{n\in \N}$ and $s$ be generalized random processes on $\mathcal{N}'$.
Then, $s_n \overset{(d)}{\underset{n\rightarrow\infty}{\longrightarrow}} s$ if and only if the underlying characteristic functionals of $s_n$ converge pointwise to the characteristic functional of $s$, so that
\begin{equation}
	 \CF_{s_n}(\varphi) \underset{n\rightarrow\infty}{\longrightarrow} \CF_s(\varphi)
\end{equation}
for all $\varphi \in \mathcal{N}$. 
\end{theorem}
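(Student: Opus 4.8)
The plan is to treat the two implications separately. The forward (necessity) direction is immediate from the definition of weak convergence; the reverse (sufficiency) direction will follow, exactly as in the finite-dimensional L\'evy theorem (Proposition~\ref{thm:levy}), from a tightness argument, but here the tightness step is where the nuclearity of $\mathcal{N}$ becomes indispensable. For necessity, suppose $s_n \overset{(d)}{\underset{n\rightarrow\infty}{\longrightarrow}} s$ and fix $\varphi \in \mathcal{N}$; then $v \mapsto \ee^{\ii \langle v, \varphi\rangle}$ has bounded continuous real and imaginary parts on $\mathcal{N}'$, so applying the defining convergence of $\CF_{s_n}$ towards $\CF_s$ to this test function yields $\CF_{s_n}(\varphi) \to \CF_s(\varphi)$.

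For sufficiency, assume $\CF_{s_n}(\varphi) \to \CF_s(\varphi)$ for all $\varphi \in \mathcal{N}$, and proceed in three steps. Step~1: the finite-dimensional marginals converge. Fix $\varphi_1,\dots,\varphi_N \in \mathcal{N}$ and write $\bm{Y}_n = (\langle s_n,\varphi_1\rangle,\dots,\langle s_n,\varphi_N\rangle)$ and $\bm{Y} = (\langle s,\varphi_1\rangle,\dots,\langle s,\varphi_N\rangle)$. For any $\bm{\theta} \in \R^N$ the one-dimensional variable $\langle \bm{\theta}, \bm{Y}_n\rangle = \langle s_n, \theta_1\varphi_1 + \cdots + \theta_N\varphi_N\rangle$ has characteristic function $\xi \mapsto \CF_{s_n}(\xi(\theta_1\varphi_1+\cdots+\theta_N\varphi_N))$, which converges pointwise to the corresponding function for $\langle\bm{\theta},\bm{Y}\rangle$. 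By Proposition~\ref{thm:levy}, $\langle\bm{\theta},\bm{Y}_n\rangle \to \langle\bm{\theta},\bm{Y}\rangle$ in law for every $\bm{\theta}$, and the Cram\'er--Wold device gives $\bm{Y}_n \to \bm{Y}$ in law; hence all finite-dimensional marginals of $s_n$ converge to those of $s$.

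Step~2: tightness of $(\CF_{s_n})_{n \in \N}$ on $\mathcal{N}'$. Since $\CF_s$ is the characteristic functional of a generalized process, the Minlos--Bochner theorem (Theorem~\ref{theo:MB}) forces it to be continuous at the origin: for each $\epsilon > 0$ there is a continuous Hilbertian seminorm $p$ on $\mathcal{N}$ with $|1 - \CF_s(\varphi)| \leq \epsilon$ whenever $p(\varphi) \leq 1$. Using the pointwise convergence $\CF_{s_n} \to \CF_s$ together with the uniform bound $|1 - \mathrm{Re}\,\CF_{s_n}| \leq 2$ and dominated convergence on finite-dimensional subspaces (integrating $1 - \mathrm{Re}\,\CF_{s_n}(\varphi)$ against a centered Gaussian kernel in $\varphi$, just as in the classical truncation estimate $\mathscr{P}(|X| \geq 2/a) \leq a^{-1}\int_{-a}^{a}(1-\mathrm{Re}\,\CF_X(t))\,\drm t$), I would upgrade this into an equicontinuity-at-zero bound that is uniform in $n$. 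Nuclearity then enters: by Minlos's compactness lemma, controlling $|1-\CF_{s_n}(\varphi)|$ by a Hilbertian seminorm $p$ confines all but an $\epsilon$ fraction of the mass of $\mathscr{P}_{s_n}$ to a ball of the dual Hilbert space associated with a seminorm $q \geq p$ through which $\mathcal{N}$ embeds Hilbert--Schmidtly, and such balls are compact in $\mathcal{N}'$. This produces a uniformly tight family. Step~3: conclude by Prokhorov. Tightness makes $(\CF_{s_n})$ relatively compact for weak convergence; any subsequential weak limit $\mu$ satisfies $\CF_\mu = \CF_s$ by the necessity direction along that subsequence combined with the hypothesis, hence has the same finite-dimensional marginals as $\mathscr{P}_s$ and therefore coincides with $\mathscr{P}_s$ on the cylindrical $\sigma$-field, i.e. $\mu = \mathscr{P}_s$. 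Since every subsequential limit equals $\mathscr{P}_s$, the whole sequence converges weakly, which is precisely $s_n \overset{(d)}{\underset{n\rightarrow\infty}{\longrightarrow}} s$.

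The main obstacle is Step~2: passing from pointwise convergence of the characteristic functionals to genuine tightness on the infinite-dimensional space $\mathcal{N}'$. This is the step that fails over general Banach spaces and requires the Hilbert--Schmidt (nuclear) structure of $\mathcal{N}$; the finite-dimensional truncation trick handles the equicontinuity transfer, while Minlos's argument supplies the compact sublevel sets that convert seminorm control into tightness.
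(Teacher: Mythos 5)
This statement is not proven in the paper: it is imported directly from Fernique's thesis (the cited Theorem III.6.5), so there is no in-paper argument to compare yours against. Judged on its own terms, your sketch follows what is indeed the standard route to this result: necessity from the definition of weak convergence applied to the bounded continuous functions $v \mapsto \cos(\langle v,\varphi\rangle)$ and $v \mapsto \sin(\langle v,\varphi\rangle)$; sufficiency via convergence of the finite-dimensional marginals (Cram\'er--Wold plus the scalar L\'evy theorem), uniform tightness, and Prokhorov combined with the injectivity of the characteristic functional. Steps~1 and~3 are unproblematic, modulo the standard facts that $\mathcal{N}'$ is a Souslin space on which Prokhorov's theorem applies and that the cylindrical $\sigma$-field agrees with the Borel $\sigma$-field there.

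The entire weight of the theorem sits in your Step~2, and there the sketch is thinner than it sounds. The claim that pointwise convergence of $\CF_{s_n}$ to a functional continuous at $0$ automatically upgrades to an equicontinuity bound uniform in $n$ is not a routine consequence of dominated convergence: on each fixed finite-dimensional subspace the truncation estimate does pass to the limit, but tightness requires a single Hilbertian seminorm and a single radius controlling all the measures $\mathscr{P}_{s_n}$ simultaneously, i.e., uniformity over all finite-dimensional sections at once. That uniformity is precisely the property that fails for infinite-dimensional Banach spaces and, by the Boulicaut result the paper itself cites, characterizes nuclear Fr\'echet spaces among Fr\'echet spaces --- so this step cannot be waved through; it essentially \emph{is} the theorem. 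The mechanism you name is the right one (integrate $1-\mathrm{Re}\,\CF_{s_n}$ against Gaussian cylinder measures adapted to the Hilbertian seminorms of $\mathcal{N}$, then use a Hilbert--Schmidt embedding to convert the resulting seminorm bound into concentration of each $\mathscr{P}_{s_n}$ on a compact ball of a dual Hilbert space), but carrying it out rigorously is the substance of Fernique's proof. As a sketch, your proposal is structurally sound and correctly locates both the difficulty and the role of nuclearity; as a proof, it would need Step~2 executed in full.
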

Interestingly, it also appears that nuclear spaces are the unique Fr\'echet spaces for which the L\'evy theorem still holds~\cite[Theorem 5.3]{Boulicaut1973}. 

We shall use Theorems~\ref{theo:MB} and~\ref{theo:Levy} with $\mathcal{N} = \mathcal{S}(\R^d)$.
Theorem~\ref{theo:MB} is our main tool to construct solutions of stochastic differential equations as generalized random processes.
On the other hand, Theorem~\ref{theo:Levy} allows one to show the convergence in law of a family of generalized random processes. 

	\subsection{L\'evy White Noises and generalized L\'evy Processes}

White noises can only be defined as generalized random processes, since
they are too erratic to be defined as classical, pointwise processes. \\

	\subsubsection{L\'evy Exponents}

L\'evy white noises are in a one-to-one correspondence with infinitely divisible random variables. 
A random variable $X$ is said to be \emph{infinitely divisible} if it can be decomposed for every $N \geq 1$ as 
\begin{equation} 
 X = X_1 + \cdots + X_N,
\end{equation} 
where the $X_n$ are independent and identically distributed (i.i.d.) The characteristic function of an infinitely divisible law has the particularity of having no zero~\cite[Lemma 7.5]{Sato1994levy}, and therefore can be written as $\CF_X (\xi) = \exp(f(\omega))$ with $f$ a continuous function~\cite[Lemma 7.6]{Sato1994levy}. 

\begin{definition}
A \emph{L\'evy exponent} is a function $f : \R \rightarrow \C$ that is the continuous log-characteristic function of an infinitely divisible law. 
\end{definition}

Theorem~\ref{theo:LK} gives the fundamental decomposition of a L\'evy exponent. It is proved in~\cite[Section 8]{Sato1994levy}.

\begin{theorem}[L\'evy-Khintchine theorem]\label{theo:LK}
A function $f : \R \rightarrow \C$ is a L\'evy exponent if and only if it can be written as
	\begin{equation} \label{eq:levyexponent}
			f(\xi) = \ii \mu \xi - \frac{\sigma^2\xi^2}{2} + \int_{\R} (\ee^{\ii \xi t} - 1 - \ii\xi t1_{\lvert t \rvert\leq 1}) V(\drm t), 
		\end{equation}
where $\mu \in \R$, $\sigma^2\geq 0$, and $V$ is a \emph{L\'evy measure}, which is a measure on $\R$ with 
\begin{equation} 
\int_{\R} \min(1,t^2) V(\drm t) < \infty  \ \mbox{and} \  V(\{0\}) = 0.
\end{equation} 	
\end{theorem}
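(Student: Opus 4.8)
The plan is to prove the two implications separately: I would establish sufficiency of the Lévy–Khintchine form by an explicit approximation argument, and I expect the necessity direction to be where the real work lies, via a compactness argument for an auxiliary family of finite measures.

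First, for sufficiency, suppose $f$ is given by \eqref{eq:levyexponent}. I would start from the elementary bound $\lvert \ee^{\ii\xi t}-1-\ii\xi t\One_{\{|t|\le 1\}}\rvert\le C(\xi)\min(1,t^2)$, which, combined with $\int\min(1,t^2)\,V(\drm t)<\infty$, shows the integral converges absolutely and that $f$ is continuous with $f(0)=0$. Then, for $\varepsilon>0$, I would replace $V$ by its finite restriction $V_\varepsilon = V|_{\{|t|>\varepsilon\}}$ and form the corresponding $f_\varepsilon$; after regrouping the compensator term, $f_\varepsilon$ is the sum of a Gaussian log-characteristic function and the log-characteristic function of a scaled, centered compound-Poisson law, so $\ee^{f_\varepsilon/N}$ is a characteristic function for every $N\ge 1$. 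Dominated convergence gives $f_\varepsilon\to f$ pointwise as $\varepsilon\to 0$, so Proposition~\ref{thm:levy} (Lévy's continuity theorem) shows $\ee^{f/N}$ is a characteristic function for every $N$; since $\ee^f=(\ee^{f/N})^N$, the associated law is infinitely divisible and $f$ is a Lévy exponent.

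Next, for necessity, let $f$ be a Lévy exponent, so that $\CF_X=\ee^f$ with $X$ infinitely divisible and $f$ continuous, $f(0)=0$ (as recalled just before the theorem). For $N\ge 1$ I would let $\mu_N$ be the law with $\widehat{\mu_N}=\ee^{f/N}$, which exists by infinite divisibility, and write
\[
N\bigl(\widehat{\mu_N}(\xi)-1\bigr)=\int_\R(\ee^{\ii\xi t}-1)\,\nu_N(\drm t),\qquad \nu_N := N\mu_N .
\]
Because $N(\ee^{f(\xi)/N}-1)\to f(\xi)$ uniformly on compact sets, the left-hand side tends to $f(\xi)$, and the problem reduces to passing to the limit in $\nu_N$ on the right. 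I would introduce the truncated measures $\widetilde\nu_N(\drm t):=\tfrac{t^2}{1+t^2}\,\nu_N(\drm t)$, show they have uniformly bounded total mass and are tight (integrating the convergence in $\xi$ over a small window controls $\nu_N$ near both $0$ and $\infty$), extract a vaguely convergent subsequence $\widetilde\nu_N\to\widetilde\nu$, then read off $\sigma^2=\widetilde\nu(\{0\})$, set $V(\drm t):=\tfrac{1+t^2}{t^2}\widetilde\nu(\drm t)$ on $\R\setminus\{0\}$ (which is a Lévy measure), and identify the residual linear term as $\ii\mu\xi$. A last interchange of limit and integral, again justified by $\lvert \ee^{\ii\xi t}-1-\ii\xi t\One_{\{|t|\le 1\}}\rvert\le C(\xi)\min(1,t^2)$, yields \eqref{eq:levyexponent}.

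The hard part will be this compactness step: the measures $\nu_N=N\mu_N$ need not themselves converge, because mass can concentrate at $0$ — producing exactly the Gaussian term $-\sigma^2\xi^2/2$ — or escape to infinity, so one is forced to pass to the weighted measures $\widetilde\nu_N$ and to separate the small- and large-jump contributions, which is precisely where the truncation function $\One_{\{|t|\le 1\}}$ in \eqref{eq:levyexponent} originates. Once the triple is extracted, uniqueness of $(\mu,\sigma^2,V)$ follows by inverting the identifications above; a complete treatment along these lines can be found in \cite[Chapter 2]{Sato1994levy}.
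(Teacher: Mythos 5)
The paper does not actually prove this statement: Theorem~\ref{theo:LK} is quoted as a classical result with a pointer to \cite[Section 8]{Sato1994levy}, and your outline is essentially that standard proof (truncation of $V$ plus the continuity theorem for sufficiency; the compound-Poisson approximation $N(\ee^{f(\xi)/N}-1)\to f(\xi)$ and vague compactness of the weighted measures $\tfrac{t^2}{1+t^2}\,N\mu_N$ for necessity), so it matches the argument the paper delegates to the literature. The one point to tighten is your appeal to Proposition~\ref{thm:levy} in the sufficiency step: as stated in the paper it presupposes that the limit is already a characteristic function, so you should either invoke the version of the continuity theorem requiring only continuity of the limit at $0$, or note that $\ee^{f/N}$ is a pointwise limit of positive-definite functions, hence positive definite and continuous (by your opening bound), and conclude via Proposition~\ref{thm:bochner}.
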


We call $(\mu,\sigma^2,V)$ the \emph{L\'evy triplet} associated to $f(\xi)$. 
If, moreover, one has that 
\begin{equation} 
\int_{\lvert t \rvert\geq 1} \lvert t\rvert^\epsilon V(\drm t) < \infty
\end{equation} 
for some $\epsilon >0$, then $V$ is called a \emph{L\'evy-Schwartz measure} and one says that $f$ satisfies the \emph{Schwartz condition}.  \\

	\subsubsection{L\'evy White Noises}
If $f$ is a L\'evy exponent satisfying the Schwartz condition, then the functional 
\begin{equation} 
\varphi \mapsto  \exp\left( \int_{\R^d} f(\varphi(\bm{x})) \drm \bm{x} \right)
\end{equation} 
is a valid characteristic functional on $\S(\R^d)$~\cite[Theorem 3]{Fageot2014}. Hence, as a consequence of Theorem~\ref{theo:MB}, there exists a generalized random process  having this characteristic functional.

\begin{definition} \label{def:innovation}
	A   \emph{ L\'evy white noise}   on $\S'(\R^d)$ is the generalized random process $w$ whose characteristic functional has the form
\begin{equation}
	\CF_w(\varphi) =\exp\left( \int_{\R^d} f(\varphi(\bm{x})) \drm \bm{x} \right),
\end{equation} 
where $f$ is a L\'evy exponent satisfying the Schwartz condition.
\end{definition}

L\'evy white noises are stationary, meaning that $w(\cdot - \bm{x}_0)$ and $w$ have the same probability law for every $\bm{x}_0$. 
They are, moreover, independent at every point, in the sense that $\langle w,\varphi_1\rangle$ and $\langle w,\varphi_2\rangle$ are independent if $\varphi_1$ and $\varphi_2$ have disjoint supports. \\

\subsubsection{Generalized L\'evy Processes}

We want to define  random processes  $s$ solutions of the equation $\Lop s = w$.
This requires one to identify compatibility conditions between $\Lop$ and $w$.
This question was addressed in previous works~\cite{Unser2014sparse, Fageot2014,Unser2014unifiedContinuous} that we summarize now.  

\begin{definition} \label{def:pminpmax}
	Let $(\mu,\sigma^2,V)$ be a L\'evy triplet.
	For $0\leq p_{\min} \leq p_{\max} \leq 2$, 
	one says that $(\mu,\sigma^2,V)$ is a \mbox{\emph{$(p_{\min},p_{\max})$-triplet}} if there exists
	\begin{equation}
	p_{\min} \leq p \leq q \leq p_{\max}
	\end{equation}
	 such that 
	\begin{enumerate}
		\item $\int_{|t|\geq 1}  \lvert t \rvert^{p} V(\drm t) < \infty$, 
		\item $\int_{|t|< 1}  \lvert t \rvert^{q} V(\drm t) < \infty$, 
		\item $p_{\min} = \inf(p,1)$ if $V$ is non-symmetric or $\mu\neq 0$, and
		\item $p_{\max} = 2 $ if $\sigma^2\neq 2$. 
	\end{enumerate}
	
	If $f$ is the L\'evy exponent associated to $(\mu,\sigma^2,V)$, then one also says that $f$ is a \emph{$(p_{\min},p_{\max})$-exponent}. 
\end{definition}

If $V$ is symmetric, then $(0,0,V)$ is a $(p_{\min},p_{\max})$-triplet if and only if 
\begin{equation}
\int_{|t|\geq 1}  \lvert t \rvert^{p_{\min}} V(\drm t) \ \mbox{and} \ \int_{|t|< 1}  \lvert t \rvert^{p_{\max}} V(\drm t)< \infty.
\end{equation}
The other conditions are added to deal with the presence of a Gaussian part (for which $p_{\max}= 2$) and the existence of asymmetry (for which $p_{\min} \geq 1$). 
Note, moreover, that every L\'evy exponent is a $(0,2)$-exponent and that a L\'evy exponent satisfies the Schwartz condition if and only if it is an $(\epsilon,2)$-exponent for some $0<\epsilon\leq2$. 

\begin{definition} \label{def:compa}
	Let $\Lop$ be a spline-admissible operator and $w$ a L\'evy white noise with L\'evy exponent $f$.
	One says that $(\Lop,w)$ is \emph{compatible} if there exists 
	\begin{equation}
	0< p_{\min} \leq p_{\max} \leq 2
	\end{equation}
	such that
	\begin{itemize}
		\item the function $f$ is a $(p_{\min},p_{\max})$-exponent; and
		\item the adjoint $\Lop^*$ of $\Lop$ admits a left inverse $\Top$ such that
			\begin{equation}
				\Top \Lop^* \{\varphi\} = \varphi, \quad \forall \varphi \in \S(\R^d)
			\end{equation}	
		is linear and continuous from $\S(\R^d)$ to $L_{p_{\min}}(\R^d) \cap L_{p_{\max}}(\R^d)$.
	\end{itemize}
\end{definition}
We know especially  that, if $(\Lop,w)$ is compatible, then the functional $\varphi \mapsto \CF_w(\Top\{\varphi\})$ is a valid characteristic functional on $\S(\R^d)$~\cite[Theorem 5]{Fageot2014}. Hence, there exists a generalized random process $s$ with $\CF_s(\varphi) = \CF_w(\Top\{\varphi\})$. Moreover, we have by duality that $\langle \Lop s ,\varphi \rangle = \langle s , \Lop^*  \varphi \rangle$ and, therefore, that
\begin{align}
	\CF_{\Lop s}(\varphi) &= \CF_s (\Lop^* \{\varphi\}) \nonumber \\ &= \CF_w(\Top \Lop^* \{\varphi\}) \nonumber \\ &= \CF_w(\varphi)
\end{align}
or, equivalently, that $\Lop s \overset{(d)}{=} w$. When $(\Lop,w)$ is compatible, we formally denote it by
\begin{equation} 
s = \Lop^{-1}w,
\end{equation} 
which implicitly means  that we fix an operator $\Top$ satisfying the conditions of Definition~\ref{def:compa} and that the characteristic functional of $s$ is $\varphi \mapsto \CF_w(\Top\{\varphi\})$.

\begin{definition}
Let $(\Lop,w)$ be compatible. The process $s = \Lop^{-1} w$   is  called a \emph{generalized L\'evy process} in general, a \emph{sparse process} if $w$ is non-Gaussian, and a \emph{Gaussian process} if $w$ is Gaussian.
\end{definition}

The inequality of Proposition~\ref{prop:technical} will be useful in the sequel. 
\begin{proposition}[Corollary 1,~\cite{Fageot2014}] \label{prop:technical}
Let $f$ be a $(p_{\min},p_{\max})$-exponent with $0< p_{\min} \leq p_{\max} \leq 2$. Then, there exist constants $\nu_1,\nu_2>0$ such that, for every $\xi \in \R$,
\begin{equation} \label{eq:bound}
	\lvert f(\xi) \rvert \leq \nu_1 \lvert \xi \rvert^{p_{\min}} + \nu_2 \lvert \xi \rvert^{p_{\max}}.
\end{equation}
\end{proposition}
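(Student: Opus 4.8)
The plan is to decompose the L\'evy exponent $f$ according to its L\'evy--Khintchine representation \eqref{eq:levyexponent} and bound each of the three contributions (the drift term, the Gaussian term, and the integral against the L\'evy measure $V$) separately, using in each case the defining integrability properties of a $(p_{\min},p_{\max})$-triplet from Definition~\ref{def:pminpmax}. I would first fix exponents $p_{\min}\le p\le q\le p_{\max}$ as in Definition~\ref{def:pminpmax}, so that $\int_{|t|\ge 1}|t|^p V(\drm t)<\infty$ and $\int_{|t|<1}|t|^q V(\drm t)<\infty$, and treat $|\xi|\le 1$ and $|\xi|\ge 1$ regimes with the observation that $|\xi|^{p_{\max}}\le |\xi|^{p_{\min}}$ on the first and the reverse on the second, so that any term of the form $C|\xi|^r$ with $p_{\min}\le r\le p_{\max}$ is dominated by $\nu_1|\xi|^{p_{\min}}+\nu_2|\xi|^{p_{\max}}$.

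The drift and Gaussian terms are immediate: $|\ii\mu\xi|=|\mu|\,|\xi|^1$ is of the required form when $\mu\neq 0$ since then condition~3 forces $p_{\min}\le 1\le p_{\max}$ (the latter because $1\le 2=p_{\max}$ when there is asymmetry or drift is combined with the standing $p_{\max}\le 2$), and the term vanishes when $\mu=0$; similarly $|\sigma^2\xi^2/2|$ is of the form $C|\xi|^2$, and when $\sigma^2\neq 0$ condition~4 gives $p_{\max}=2$, so this is $\le C|\xi|^{p_{\max}}$. The main work is the integral term $I(\xi)=\int_{\R}(\ee^{\ii\xi t}-1-\ii\xi t\One_{|t|\le 1})V(\drm t)$. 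Here I would split the domain of integration into $|t|\le 1$ and $|t|>1$. On $|t|>1$, use $|\ee^{\ii\xi t}-1|\le 2$ together with $|\ee^{\ii\xi t}-1|\le |\xi t|$, interpolated as $|\ee^{\ii\xi t}-1|\le 2^{1-p}|\xi t|^{p}$ (valid for $0\le p\le 1$; for $p$ possibly up to $2$ one uses instead $\min(2,|\xi t|)\le \min(2,|\xi t|)^{p}\cdot 2^{1-p}$ type bounds, or simply bounds by $C|\xi t|^{p}$ on the relevant range), so that $\int_{|t|>1}|\ee^{\ii\xi t}-1|V(\drm t)\le C|\xi|^{p}\int_{|t|>1}|t|^{p}V(\drm t)=C'|\xi|^{p}$. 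On $|t|\le 1$, use the second-order Taylor estimate $|\ee^{\ii\xi t}-1-\ii\xi t|\le \tfrac12|\xi t|^2$ together with the cruder $|\ee^{\ii\xi t}-1-\ii\xi t|\le 2|\xi t|$, interpolated to give $|\ee^{\ii\xi t}-1-\ii\xi t|\le C|\xi t|^{q}$ for the chosen $q\in[1,2]$ (when $q<1$ one instead absorbs the $\ii\xi t$ compensator differently, but the defining inequalities of the triplet are arranged so that $q\ge 1$ whenever the compensator is present), whence $\int_{|t|\le 1}|\ee^{\ii\xi t}-1-\ii\xi t|V(\drm t)\le C|\xi|^{q}\int_{|t|\le 1}|t|^{q}V(\drm t)=C'|\xi|^{q}$.

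Collecting the bounds gives $|f(\xi)|\le C(|\xi|^{1}\One_{\mu\neq 0}+|\xi|^{2}\One_{\sigma^2\neq 0}+|\xi|^{p}+|\xi|^{q})$, and since each exponent appearing lies in $[p_{\min},p_{\max}]$, the regime comparison above yields \eqref{eq:bound} with suitable $\nu_1,\nu_2$. The step I expect to be the main obstacle is the careful interpolation of the integrand estimates so that the powers of $|t|$ that appear match exactly the exponents $p$ and $q$ for which integrability of $V$ is assumed, especially near the boundary cases $p=1$, $q=1$, and $q=2$, and in reconciling the definition's conditions~3--4 (which govern when $p_{\min}$ or $p_{\max}$ is pinned) with the presence or absence of the drift and Gaussian terms; this is exactly the bookkeeping that Definition~\ref{def:pminpmax} is designed to make work, so it is routine but must be done with care. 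Since the statement is quoted as \cite[Corollary 1]{Fageot2014}, I would in practice simply cite that reference, but the sketch above is the self-contained argument.
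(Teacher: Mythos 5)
Your proposal is correct in outline, and it actually does more than the paper: the paper's entire ``proof'' of Proposition~\ref{prop:technical} consists of citing \cite{Fageot2014} for the bound $\lvert g(\xi)\rvert \le \kappa_1\lvert\xi\rvert^{p_{\min}}+\kappa_2\lvert\xi\rvert^{p_{\max}}$ on the non-Gaussian part $g(\xi)=f(\xi)-\ii\mu\xi+\tfrac{\sigma^2\xi^2}{2}$, and then propagating this to $f$ by observing that $p_{\min}\le 1$ when $\mu\neq 0$ and $p_{\max}=2$ when $\sigma^2\neq 0$ --- which is word for word your treatment of the drift and Gaussian terms via conditions 3 and 4 of Definition~\ref{def:pminpmax}. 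Your handling of the integral term is therefore a reproof of the cited corollary rather than something the paper itself carries out, and the $\lvert t\rvert\le 1$ versus $\lvert t\rvert>1$ split is indeed the standard route. One caveat on the $\lvert t\rvert>1$ piece: the interpolation $\lvert \ee^{\ii\theta}-1\rvert\le C\lvert\theta\rvert^{p}$ is simply false for $p>1$ (the left side is of order $\lvert\theta\rvert$ near $0$), so when $p>1$ you cannot extract the full power $p$ from $\lvert \ee^{\ii\xi t}-1\rvert$ directly. The standard fix is to separate real and imaginary parts: the real part $\cos(\xi t)-1$ enjoys the second-order bound $\min(2,\lvert\xi t\rvert^2/2)\le C\lvert\xi t\rvert^{r}$ for any $r\in[0,2]$, while the imaginary part $\sin(\xi t)$ only yields $\min(1,\lvert\xi t\rvert)\le C\lvert\xi t\rvert^{\min(p,1)}$ --- and $\min(p,1)$ lies in $[p_{\min},p_{\max}]$ precisely because condition 3 pins $p_{\min}=\inf(p,1)$ when the imaginary part does not vanish (i.e., when $V$ is non-symmetric or $\mu\neq 0$). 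This is the bookkeeping you correctly flag as the delicate step; with the real/imaginary separation made explicit it closes, and the whole argument matches what \cite{Fageot2014} does.
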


Strictly speaking, Corollary $1$ in~\cite{Fageot2014} states that the non-Gaussian part of $f$, denoted by $g = f(\xi) - \ii \mu \xi + \frac{\sigma^2 \xi^2}{2}$, satisfies 
\begin{equation}
\lvert g(\xi) \rvert \leq \kappa_1 \lvert \xi \rvert^{p_{\min}} + \kappa_2 \lvert \xi \rvert^{p_{\max}}
\end{equation}
for some constants $\kappa_1,\kappa_2>0$. We easily propagate  this inequality to $f$ by exploiting  that $p_{\min} \leq 1$ ($p_{\max} =2$, respectively) when $\mu\neq 0$ ($\sigma^2 \neq 0$, respectively).

Proposition~\ref{prop:technical} allows us to extend the domain of continuity $\CF_w(\varphi)$ from $\S(\R^d)$ to $L_{p_{\min}}(\R^d) \cap L_{p_{\max}}(\R^d)$ . Indeed, \eqref{eq:bound} implies that 
\begin{align}
	\lvert \log \CF_w(\varphi) \rvert  & \leq \int_{\R^d} \lvert f(\varphi(\bm{x})) \rvert \drm \bm{x}    \nonumber \\
	& \leq  \nu_1 \lVert \varphi \rVert_{p_{\min}}^{p_{\min}} +  \nu_2 \lVert \varphi \rVert_{p_{\max}}^{p_{\max}}.
\end{align}
Therefore, $\CF_w$ is well-defined  over $L_{p_{\min}}(\R^d) \cap L_{p_{\max}}(\R^d)$ and continuous at $\varphi =0$. Since characteristic functionals are positive-definite, the continuity at $0$ implies the continuity over $L_{p_{\min}}(\R^d) \cap L_{p_{\max}}(\R^d)$~\cite{Horn1975quadratic}. 

\begin{corollary}  \label{coro:technical}
With the notations of Proposition~\ref{prop:technical}, the characteristic functional $\CF_w(\varphi)$ of the L\'evy white noise $w$ on $\S'(\R^d)$ with L\'evy exponent $f$, which is a priori defined for $\varphi\in\S(\R^d)$, can be extended continuously to $L_{p_{\min}}(\R^d) \cap L_{p_{\max}}(\R^d)$.
\end{corollary}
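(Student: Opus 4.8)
\emph{Proof proposal.} The plan is to read off from Proposition~\ref{prop:technical} that the integral defining $\CF_w$ in Definition~\ref{def:innovation} still converges absolutely when $\varphi$ only belongs to $L_{p_{\min}}(\R^d) \cap L_{p_{\max}}(\R^d)$, and then to upgrade the resulting a priori bound into genuine continuity on that space. Since Proposition~\ref{prop:technical} carries all the analytic content, the only delicate point will be that the a priori bound by itself yields continuity of $\CF_w$ only \emph{at the origin}; promoting this to continuity at an arbitrary point needs a short separate argument.

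First I would fix $\varphi \in L_{p_{\min}}(\R^d) \cap L_{p_{\max}}(\R^d)$ and apply the pointwise bound \eqref{eq:bound} to $\varphi(\bm{x})$, obtaining $\int_{\R^d} |f(\varphi(\bm{x}))|\drm\bm{x} \leq \nu_1 \lVert\varphi\rVert_{p_{\min}}^{p_{\min}} + \nu_2 \lVert\varphi\rVert_{p_{\max}}^{p_{\max}} < \infty$. Thus $\varphi \mapsto \exp\bigl(\int_{\R^d} f(\varphi(\bm{x}))\drm\bm{x}\bigr)$ is a well-defined functional on $L_{p_{\min}}(\R^d) \cap L_{p_{\max}}(\R^d)$ which coincides with $\CF_w$ on $\S(\R^d)$; this is the proposed extension, and it already satisfies $|\log \CF_w(\varphi)| \leq \nu_1 \lVert\varphi\rVert_{p_{\min}}^{p_{\min}} + \nu_2 \lVert\varphi\rVert_{p_{\max}}^{p_{\max}}$, hence is continuous at $\varphi = 0$.

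To obtain continuity at an arbitrary point I would argue by dominated convergence combined with the subsequence principle. Take $\varphi_n \to \varphi$ in $L_{p_{\min}}(\R^d) \cap L_{p_{\max}}(\R^d)$; after extracting a subsequence along which $\lVert\varphi_n - \varphi\rVert_{p_{\min}} + \lVert\varphi_n - \varphi\rVert_{p_{\max}} \leq 2^{-n}$, the function $G := \sum_n |\varphi_n - \varphi|$ lies in $L_{p_{\min}}(\R^d) \cap L_{p_{\max}}(\R^d)$, so that $|\varphi_n(\bm{x})| \leq |\varphi(\bm{x})| + G(\bm{x})$ for all $n$ and $\varphi_n \to \varphi$ almost everywhere. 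Continuity of the L\'evy exponent $f$ gives $f(\varphi_n(\bm{x})) \to f(\varphi(\bm{x}))$ a.e., while \eqref{eq:bound} provides the $n$-independent integrable majorant $\nu_1 (|\varphi| + G)^{p_{\min}} + \nu_2 (|\varphi| + G)^{p_{\max}}$. Dominated convergence then yields $\int_{\R^d} f(\varphi_n(\bm{x}))\drm\bm{x} \to \int_{\R^d} f(\varphi(\bm{x}))\drm\bm{x}$, hence $\CF_w(\varphi_n) \to \CF_w(\varphi)$; since every subsequence of $(\CF_w(\varphi_n))$ admits a further subsequence converging to $\CF_w(\varphi)$, the full sequence converges, proving continuity on $L_{p_{\min}}(\R^d) \cap L_{p_{\max}}(\R^d)$. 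An alternative for this last step — the one used in the paragraph preceding the statement — is to invoke that $\CF_w$ is positive-definite and to propagate continuity from the origin via $|\CF_w(\varphi) - \CF_w(\psi)|^2 \leq 2\,\mathrm{Re}\bigl(1 - \CF_w(\varphi - \psi)\bigr)$, as in~\cite{Horn1975quadratic}.

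I do not expect a genuine obstacle: granting Proposition~\ref{prop:technical}, the whole statement reduces to an interchange of limit and integral, and the only subtlety — flagged above — is that one must not mistake the a priori size estimate for a modulus of continuity.
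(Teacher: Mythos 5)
Your proposal is correct, and the first half of it coincides with the paper's argument: both you and the authors use the bound \eqref{eq:bound} of Proposition~\ref{prop:technical} to get $\lvert \log \CF_w(\varphi)\rvert \leq \nu_1\lVert\varphi\rVert_{p_{\min}}^{p_{\min}} + \nu_2\lVert\varphi\rVert_{p_{\max}}^{p_{\max}}$, which simultaneously shows that the defining integral converges absolutely on $L_{p_{\min}}(\R^d)\cap L_{p_{\max}}(\R^d)$ and that the extension is continuous at $\varphi=0$. Where you diverge is in the promotion from continuity at the origin to continuity everywhere: the paper disposes of this in one line by invoking positive-definiteness of characteristic functionals together with the inequality of~\cite{Horn1975quadratic} (essentially $\lvert\CF_w(\varphi)-\CF_w(\psi)\rvert^2 \leq 2\bigl(1-\mathrm{Re}\,\CF_w(\varphi-\psi)\bigr)$), whereas your primary argument is a direct dominated-convergence proof with the standard subsequence extraction ($\lVert\varphi_n-\varphi\rVert\leq 2^{-n}$, majorant built from $G=\sum_n\lvert\varphi_n-\varphi\rvert$, then the sub-subsequence principle). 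Your route is sound -- note that for $p_{\min}<1$ one should use $\lVert f+g\rVert_p^p\leq\lVert f\rVert_p^p+\lVert g\rVert_p^p$ rather than Minkowski to see $G\in L_{p_{\min}}$, but that is routine -- and it has the merit of being self-contained and of using only the continuity of the L\'evy exponent $f$, at the price of more bookkeeping; the paper's route is shorter but leans on an external structural fact about positive-definite functionals. You correctly identify the one genuine subtlety (an a priori size estimate is not a modulus of continuity away from the origin), and you even name the paper's alternative explicitly, so nothing is missing.
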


\section{Generalized Poisson Processes: a Bridge Between $\Lop$-Splines and Generalized L\'evy Processes} \label{sec:Poisson}

Generalized Poisson processes are generalized L\'evy processes driven by impulsive noise. They can be interpreted as random \mbox{$\Lop$-splines}, which makes them conceptually more accessible than other generalized L\'evy processes.

\begin{definition}
Let $\lambda >0$ and let $P$ be a probability law on $\Rstar$ such that there exists $\epsilon>0$ for which $\int_{\Rstar} \lvert t \rvert^\epsilon P(\drm t) < \infty$. 
The \emph{impulsive noise} $w$ with \emph{rate} $\lambda > 0$ and \emph{amplitude probability law} $P$ is the process with characteristic functional
\begin{equation} \label{eq:CF_Poisson}
	\CF_w(\varphi) =  \exp\left(\lambda \int_{\R^d} \int_{\R} \left( \ee^{\ii \varphi(\bm{x})  t} -1 \right)P(\drm t) \drm \bm{x}  \right).
\end{equation}
\end{definition}

According to~\cite[Theorem 1]{Unser2011stochastic}, one has that
\begin{equation}\label{eq:sum_dirac}
	w = \sum_{n\in\Z} a_n \delta(\cdot - \bm{x}_n),
\end{equation}
where the sequence $(a_n)$ is i.i.d. with law $P$ and the sequence $(\bm{x}_n)$, independent of $(a_n)$, is such that, for every finite measure Borel set $A \subset \R^d$, $\mathrm{card} \{n \in \Z, \ \bm{x}_n \in A\}$ is a Poisson random variable with parameter $\lambda L(A)$,  $L$ being the Lebesgue measure on $\R^d$.

\begin{proposition}
An impulsive noise with rate $\lambda > 0$ and jump-size probability law $P$ is a L\'evy white noise with triplet $(\lambda \mu_P, 0, \lambda P)$, where $\mu_P = \int_{\lvert t \rvert < 1} t P(\drm t)$. Moreover, its L\'evy exponent is given by
\begin{equation}
	f(\xi) = \lambda (\widehat{P} (\xi) - 1)
\end{equation}
with $\widehat{P}$ the characteristic function of $P$. 
\end{proposition}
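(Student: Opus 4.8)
\textit{Proof plan.} The strategy is to extract the L\'evy exponent directly from the characteristic functional \eqref{eq:CF_Poisson}, then to exhibit it in the L\'evy--Khintchine form of Theorem~\ref{theo:LK} with the announced triplet, and finally to check the Schwartz condition required by Definition~\ref{def:innovation}. I would first introduce
\[
f(\xi) := \lambda \int_{\R} \bigl( \ee^{\ii \xi t} - 1 \bigr) P(\drm t) = \lambda\bigl( \widehat P(\xi) - 1 \bigr),
\]
which is well defined and bounded because $\lvert \ee^{\ii \xi t} - 1\rvert \le \min(2, \lvert \xi\rvert \lvert t\rvert)$ and $P$ is a probability measure; it is continuous since $\widehat P$ is a characteristic function, and $f(0) = \lambda(\widehat P(0) - 1) = 0$. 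Pulling $\lambda$ inside the spatial integral in \eqref{eq:CF_Poisson} then gives $\CF_w(\varphi) = \exp\bigl(\int_{\R^d} f(\varphi(\bm x))\,\drm \bm x\bigr)$, which is exactly the form required in Definition~\ref{def:innovation} and already yields the claimed expression for the exponent.

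The next step is to recognize $f$ as a genuine L\'evy exponent. I would take $\sigma^2 = 0$ and $V = \lambda P$. Then $V$ is a L\'evy measure: $V(\{0\}) = \lambda P(\{0\}) = 0$ since $P$ is supported on $\Rstar$, and $\int_{\R} \min(1,t^2)\, V(\drm t) \le \lambda < \infty$ because the integrand is bounded by $1$. Splitting off the compensation term gives
\[
\int_{\R} \bigl( \ee^{\ii \xi t} - 1 - \ii \xi t\, \One_{\{\lvert t \rvert < 1\}} \bigr) (\lambda P)(\drm t) = \lambda\bigl( \widehat P(\xi) - 1\bigr) - \ii \xi\, \lambda \mu_P,
\]
so that $f(\xi) = \ii (\lambda \mu_P) \xi - 0 + \int_{\R} ( \ee^{\ii \xi t} - 1 - \ii \xi t\, \One_{\{\lvert t\rvert<1\}})\,(\lambda P)(\drm t)$, exhibiting $f$ in the form \eqref{eq:levyexponent} with triplet $(\lambda \mu_P, 0, \lambda P)$. (The truncation indicator in \eqref{eq:levyexponent} is conventional; replacing $\One_{\{\lvert t\rvert\le 1\}}$ by $\One_{\{\lvert t\rvert<1\}}$ merely shifts the drift by $\lambda \int_{\lvert t\rvert = 1} t\, P(\drm t)$ and is immaterial here.)

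Finally, to invoke Definition~\ref{def:innovation} I would check that $f$ satisfies the Schwartz condition, i.e.\ that $\int_{\lvert t\rvert \ge 1} \lvert t\rvert^\epsilon\, V(\drm t) < \infty$ for some $\epsilon > 0$; this is immediate from the hypothesis $\int_{\Rstar} \lvert t\rvert^\epsilon P(\drm t) < \infty$ built into the definition of impulsive noise, since $\int_{\lvert t\rvert\ge 1} \lvert t\rvert^\epsilon (\lambda P)(\drm t) \le \lambda \int_{\Rstar} \lvert t\rvert^\epsilon P(\drm t)$. Putting the pieces together, $w$ is a bona fide L\'evy white noise with characteristic functional \eqref{eq:CF_Poisson}, L\'evy exponent $f(\xi) = \lambda(\widehat P(\xi) - 1)$, and triplet $(\lambda \mu_P, 0, \lambda P)$. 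The argument is essentially a bookkeeping exercise in unwinding definitions; the only point that needs mild care is keeping track of the truncation function in the L\'evy--Khintchine representation, so I do not anticipate a genuine obstacle.
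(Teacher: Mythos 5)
Your proposal is correct and follows essentially the same route as the paper, which simply declares the result obvious by comparing the characteristic functional \eqref{eq:CF_Poisson} with the L\'evy--Khintchine form \eqref{eq:levyexponent}; you have merely written out the bookkeeping (identifying $V=\lambda P$, $\sigma^2=0$, absorbing the compensation term into the drift $\lambda\mu_P$, and verifying the L\'evy-measure and Schwartz conditions) that the paper leaves implicit. Your remark about the truncation convention $\One_{\{\lvert t\rvert\le 1\}}$ versus $\One_{\{\lvert t\rvert<1\}}$ is a fair and careful observation, and correctly dismissed as immaterial.
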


\begin{proof}
This result is obvious by comparing \eqref{eq:CF_Poisson} with the general form of a L\'evy exponent \eqref{eq:levyexponent}. 
\end{proof}

\begin{definition}
	Let $(\Lop,w)$ be compatible with $w$ an impulsive noise. Then, the process $s = \Lop^{-1} w$ is called a \emph{generalized Poisson process}. 
\end{definition}

\begin{proposition}
A generalized Poisson process $s$ is almost surely a nonuniform $\Lop$-spline. 
\end{proposition}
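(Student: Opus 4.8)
The plan is to exhibit an explicit realization of $s$ for which the spline identity holds \emph{surely}, and then to transfer the conclusion to the law of $s$. Since $(\Lop,w)$ is compatible, fix once and for all the left inverse $\Top$ of $\Lop^*$ provided by Definition~\ref{def:compa}: it is linear and continuous from $\S(\R^d)$ into $L_{p_{\min}}(\R^d)\cap L_{p_{\max}}(\R^d)$ and satisfies $\Top\Lop^*\{\varphi\}=\varphi$ for all $\varphi\in\S(\R^d)$. By Corollary~\ref{coro:technical}, the pairing $\langle w,\cdot\rangle$ of the impulsive noise extends from $\S(\R^d)$ to an (a.s.\ defined, linear) functional on $L_{p_{\min}}(\R^d)\cap L_{p_{\max}}(\R^d)$; hence the prescription $\langle \widetilde s,\varphi\rangle:=\langle w,\Top\{\varphi\}\rangle$ defines a generalized random process $\widetilde s$ on $\S'(\R^d)$, namely $\widetilde s=\Top^* w$, whose characteristic functional is $\varphi\mapsto\CF_w(\Top\{\varphi\})$. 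Thus $\widetilde s\overset{(d)}{=}s$.

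For this realization, duality gives, for every $\varphi\in\S(\R^d)$,
\begin{equation*}
\langle \Lop\widetilde s,\varphi\rangle=\langle \widetilde s,\Lop^*\{\varphi\}\rangle=\langle w,\Top\Lop^*\{\varphi\}\rangle=\langle w,\varphi\rangle ,
\end{equation*}
so that $\Lop\widetilde s=w$ holds surely, not merely in law. Invoking the representation \eqref{eq:sum_dirac} of the impulsive noise, $w=\sum_{n\in\Z}a_n\delta(\cdot-\bm{x}_n)$ almost surely with $(a_n)$ i.i.d.\ of law $P$ and $(\bm{x}_n)$ an independent spatial Poisson process of rate $\lambda$, we obtain that almost surely $\Lop\widetilde s=\sum_{n\in\Z}a_n\delta(\cdot-\bm{x}_n)$. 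By Definition~\ref{def:Lsplines}, $\widetilde s$ is therefore almost surely a nonuniform $\Lop$-spline with knots $(\bm{x}_n)$ and weights $(a_n)$.

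It then remains to pass from $\widetilde s$ to $s$. The event ``being a nonuniform $\Lop$-spline'' is the preimage, under the measurable map $\Lop:\S'(\R^d)\to\S'(\R^d)$, of the set $\D\subset\S'(\R^d)$ of countable sums of weighted and shifted Dirac impulses, which is measurable for the cylindrical $\sigma$-field; hence this event has a probability that depends only on the law of the process. Since $\widetilde s$ and $s$ share the same law and $\widetilde s$ lies in this event almost surely, so does $s$, which is the claim.

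The step I expect to be the main obstacle is the construction in the first paragraph: one must verify rigorously that $\widetilde s=\Top^* w$ is a bona fide generalized random process---in particular that $\langle w,\Top\{\varphi\}\rangle$ is a well-defined random variable even though $\Top\{\varphi\}$ is generally not a Schwartz function---and that its characteristic functional is exactly $\CF_w\circ\Top$. This is precisely what the continuous extension furnished by Corollary~\ref{coro:technical} (together with the positive-definiteness argument underlying it) is meant to supply, so the work is in quoting it correctly and in checking that the distributional identity $\Lop\widetilde s=w$ survives this extension; the measurability of $\D$ and the transfer argument of the third paragraph are routine.
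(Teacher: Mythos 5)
Your conclusion is right, but you have taken a longer road than the paper, and the one step you yourself flag as delicate is indeed not covered by the results you cite. The paper's proof is two lines: since $(\Lop,w)$ is compatible, it has already been established (right after Definition~\ref{def:compa}) that $\Lop s \overset{(d)}{=} w$, and $w$ is almost surely of the form \eqref{eq:sum_dirac}; the conclusion then follows from Definition~\ref{def:Lsplines}. The only thing the paper leaves implicit is exactly your third paragraph: the event ``being a countable sum of weighted, shifted Dirac impulses'' is determined by the law, so its probability under $\Lop s$ equals its probability under $w$, which is $1$. Applied directly to the pair $(\Lop s, w)$, that transfer argument already finishes the proof; your first two paragraphs, which manufacture a realization $\widetilde s$ with $\Lop\widetilde s=w$ holding surely, are not needed for the statement.

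They are also the weak point. Corollary~\ref{coro:technical} extends the characteristic functional $\CF_w$ to $L_{p_{\min}}(\R^d)\cap L_{p_{\max}}(\R^d)$; it says nothing about extending the random pairing $\langle w,\cdot\rangle$ itself to such test functions, which is a strictly stronger statement (one would have to show, e.g., that $\langle w,\varphi_k\rangle$ converges in probability when $\varphi_k\to\Top\{\varphi\}$ in $L_{p_{\min}}\cap L_{p_{\max}}$, and then that $\varphi\mapsto\langle w,\Top\{\varphi\}\rangle$ admits a version taking values in $\S'(\R^d)$). The paper deliberately sidesteps this by defining $s=\Lop^{-1}w$ only through its characteristic functional via Minlos--Bochner, i.e., only in law. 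So: drop $\widetilde s$, apply your measurability-and-transfer argument directly to $\Lop s\overset{(d)}{=}w$, and your proof collapses onto the paper's while making its implicit step explicit. The one residual point deserving a sentence in either version is the measurability, for the cylindrical $\sigma$-field, of the set of countable sums of shifted weighted Diracs: you assert it as routine, the paper does not mention it at all, and it is the only genuinely non-trivial ingredient of the transfer.
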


\begin{proof}
	 Let $s = \Lop^{-1} w$ be a generalized Poisson process, with $w$ an impulsive noise and $\Lop$ a spline-admissible operator. Then, according to \eqref{eq:sum_dirac}, we have that
\begin{equation}
\Lop s \overset{(d)}{=} w = \sum_{n\in \Z} a_n \delta(\cdot - \bm{x}_n).
\end{equation}
Based on Definition~\ref{def:Lsplines}, the function $s$ is therefore an $\Lop$-spline almost surely. 
\end{proof}

This connection with spline theory gives a very intuitive way of understanding generalized Poisson processes: their realizations are nonuniform $\Lop$-splines.

\section{Generalized L\'evy Processes as Limits of Generalized Poisson Processes} \label{sec:proof}

This section is dedicated to the proof of Theorem~\ref{theo:main}.
We start with some notations.
The characteristic function of a compound-Poisson law   with rate $\lambda$ and jump law $P$ is given by 
\begin{equation}
\ee^{\lambda (\widehat{P}(\xi) -1 )}
\end{equation}
 with $\widehat{P}$ the characteristic function of $P$. If $f$ is a L\'evy exponent, then one denotes by $ {P}_{f}$ the compound-Poisson probability law with rate $\lambda=1$ and  by \textit{law of jumps} the infinitely divisible law with characteristic function $\ee^{f}$. The characteristic function of $ {P}_f$ is therefore $\widehat{P}_f(\xi) = \ee^{\ee^{f(\xi)} - 1}$ and the  L\'evy exponent of $ {P}_f$ is
\begin{equation}
 \ee^{f(\xi)} - 1.
\end{equation} 

	\subsection{Compatibility of Impulsive Noises}
	
First of all, we show that, if an operator $\Lop$ is compatible with a L\'evy noise $w$ whose L\'evy exponent is $f$, then it is also compatible with any impulsive noise with the law of jumps $P_f$. 

\begin{proposition} \label{prop:Poisson_exponent}
	If $f$ is a $(p_{\min},p_{\max})$-exponent, then, for every $\lambda>0$ and $\tau \neq 0$, the L\'evy exponent
\begin{equation}
f_{\lambda, \tau}(\xi) = \lambda (\ee^{\tau f(\xi)}-1)
\end{equation}
associated with the generalized Poisson process of rate $\lambda$ and law of jumps $P_{\tau f}$ 
is also a $(p_{\min},p_{\max})$-exponent. 
\end{proposition}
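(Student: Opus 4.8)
The plan is to work entirely at the level of L\'evy triplets: I would write down the triplet of $f_{\lambda,\tau}$ explicitly and then verify, one by one, the four conditions of Definition~\ref{def:pminpmax} for the \emph{same} pair $(p_{\min},p_{\max})$ that witnesses that $f$ is a $(p_{\min},p_{\max})$-exponent. Since $P_{\tau f}$ is the infinitely divisible probability law with characteristic function $\ee^{\tau f}$, I would first rewrite
\begin{equation}
f_{\lambda,\tau}(\xi) = \lambda\bigl(\ee^{\tau f(\xi)}-1\bigr) = \lambda\int_{\R}\bigl(\ee^{\ii\xi t}-1\bigr)\,P_{\tau f}(\drm t).
\end{equation}
Because $P_{\tau f}$ is a probability measure, $\int_{\lvert t\rvert\le 1}\lvert t\rvert\,P_{\tau f}(\drm t)\le 1<\infty$, so one may peel off the truncated linear term and read from the L\'evy--Khintchine form \eqref{eq:levyexponent} (Theorem~\ref{theo:LK}) that $f_{\lambda,\tau}$ has triplet $\bigl(\lambda\!\int_{\lvert t\rvert\le 1}t\,P_{\tau f}(\drm t),\,0,\,\lambda P_{\tau f}\bigr)$. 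In particular $f_{\lambda,\tau}$ has \emph{no Gaussian component} and its L\'evy measure is $V_{\lambda,\tau}=\lambda P_{\tau f}$ (restricted to $\Rstar$, which changes nothing).

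The technical heart of the argument is controlling the tail of $V_{\lambda,\tau}$, that is, the tail of the law $P_{\tau f}$, by the tail of the L\'evy measure $V$ of $f$. For this I would invoke the classical equivalence between submultiplicative moments of an infinitely divisible law and the corresponding tail integral of its L\'evy measure \cite[Theorem~25.3]{Sato1994levy}: applied to the submultiplicative function $t\mapsto(1+\lvert t\rvert)^{p}$, and noting that the L\'evy measure of $P_{\tau f}$ is $\tau V$ when $\tau>0$ (and $\lvert\tau\rvert$ times the reflection of $V$ when $\tau<0$, which has the same absolute-value tail), it gives, for every $p\ge 0$,
\begin{equation}
\int_{\lvert t\rvert\ge 1}\lvert t\rvert^{p}\,P_{\tau f}(\drm t)<\infty \quad\Longleftrightarrow\quad \int_{\lvert t\rvert\ge 1}\lvert t\rvert^{p}\,V(\drm t)<\infty.
\end{equation}
Since $f$ is a $(p_{\min},p_{\max})$-exponent, there is $p\in[p_{\min},p_{\max}]$ with $\int_{\lvert t\rvert\ge1}\lvert t\rvert^{p}V(\drm t)<\infty$, and as $\lvert t\rvert^{p_{\min}}\le\lvert t\rvert^{p}$ on $\{\lvert t\rvert\ge 1\}$ this even yields $\int_{\lvert t\rvert\ge1}\lvert t\rvert^{p_{\min}}V(\drm t)<\infty$. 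Hence $\int_{\lvert t\rvert\ge 1}\lvert t\rvert^{p_{\min}}V_{\lambda,\tau}(\drm t)<\infty$, which is condition~(1) of Definition~\ref{def:pminpmax} with exponent $p_{\min}$. Condition~(2) is immediate: for any $q\ge 0$, $\int_{\lvert t\rvert<1}\lvert t\rvert^{q}V_{\lambda,\tau}(\drm t)=\lambda\int_{\lvert t\rvert<1}\lvert t\rvert^{q}P_{\tau f}(\drm t)\le\lambda<\infty$, so one may take $q=p_{\max}$. Thus the pair $(p',q')=(p_{\min},p_{\max})$ fulfils (1)--(2).

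It remains to deal with (3) and (4). Condition~(4) is vacuous here since $\sigma^2_{\lambda,\tau}=0$, so no constraint is imposed on $p_{\max}$ by a Gaussian part and the choice $q'=p_{\max}$ is legitimate regardless of whether $p_{\max}=2$. For condition~(3) I would use that symmetry propagates: if $f$ is symmetric (equivalently $\mu=0$ and $V$ symmetric), then $\ee^{\tau f}$ is real and even, so $P_{\tau f}$ is symmetric, hence $V_{\lambda,\tau}=\lambda P_{\tau f}$ is symmetric and the truncated drift $\lambda\int_{\lvert t\rvert\le1}t\,P_{\tau f}(\drm t)$ vanishes; in that case condition~(3) does not apply. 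Conversely, if $V_{\lambda,\tau}$ is non-symmetric or $f_{\lambda,\tau}$ has non-zero drift, then by contraposition $f$ itself is non-symmetric or $\mu\neq0$, so condition~(3) applied to $f$ forces $p_{\min}=\inf(p,1)\le 1$; therefore $\inf(p_{\min},1)=p_{\min}$, and the choice $p'=p_{\min}$ satisfies $p_{\min}=\inf(p',1)$, which is precisely condition~(3) for $f_{\lambda,\tau}$. Collecting the four verifications, $(p_{\min},p_{\max})$ witnesses that $f_{\lambda,\tau}$ is a $(p_{\min},p_{\max})$-exponent.

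The main obstacle is the tail estimate for $P_{\tau f}$: one must cross from an integrability property of the L\'evy measure $V$ of $f$ to a \emph{moment} property of the law $P_{\tau f}$, and then return to the L\'evy measure $\lambda P_{\tau f}$ of the compound-Poisson exponent. This is exactly where the infinite-divisibility input \cite[Theorem~25.3]{Sato1994levy} and the submultiplicativity of $t\mapsto(1+\lvert t\rvert)^{p}$ are needed; everything else reduces to the monotonicity $\lvert t\rvert^{p_{\min}}\le\lvert t\rvert^{p}$ on $\{\lvert t\rvert\ge 1\}$, the triviality of small-jump integrals against a probability measure, and the observation that passing to $P_{\tau f}$ destroys the Gaussian component, so that condition~(4) never obstructs the conclusion.
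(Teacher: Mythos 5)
Your proof is correct for $\tau>0$ and shares the paper's overall skeleton---write down the triplet $(\lambda\mu_{P_{\tau f}},0,\lambda P_{\tau f})$ and check the four conditions of Definition~\ref{def:pminpmax}, with the small-jump, Gaussian, and asymmetry conditions handled essentially as in the paper---but you execute the key tail estimate differently. The paper applies Lemma~\ref{lemma:wolfe} to the compound-Poisson variable $Y$ with L\'evy measure $\lambda P_{\tau f}$, reducing condition (1) to $\mathbb{E}[\lvert Y\rvert^{p_{\min}}]<\infty$, and then bounds this moment via the representation $Y\overset{(d)}{=}\sum_{i=1}^{N}X_i$ and the elementary inequality $\lvert\sum_i x_i\rvert^{p}\le N\sum_i\lvert x_i\rvert^{p}$, arriving at $\mathbb{E}[N^2]\,\mathbb{E}[\lvert X_1\rvert^{p_{\min}}]$. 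You instead apply the same lemma once, directly to the jump law $P_{\tau f}$, whose L\'evy measure is $\tau V$, converting the hypothesis $\int_{\lvert t\rvert\ge1}\lvert t\rvert^{p}V(\drm t)<\infty$ into $\int_{\lvert t\rvert\ge1}\lvert t\rvert^{p}P_{\tau f}(\drm t)<\infty$. This is shorter and arguably tighter: in the paper's chain the factor $\mathbb{E}[\lvert X_1\rvert^{p_{\min}}]$ (with $X_1\sim P_{\tau f}$) is, up to a bounded term, precisely the quantity being proven finite, so its finiteness must in any case be supplied by the very application of Lemma~\ref{lemma:wolfe} to the jump law that constitutes your single step; the detour through $Y$ and the Poisson-sum inequality buys nothing here. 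Two minor caveats. Your parenthetical for $\tau<0$ (L\'evy measure equal to $\lvert\tau\rvert$ times the reflection of $V$) is not correct: for $\tau<0$ the function $\tau f$ is in general not a L\'evy exponent (take $f$ Gaussian) and $P_{\tau f}$ need not exist; but this is a defect of the ``$\tau\neq0$'' in the statement that the paper's own proof shares, and only $\tau>0$ is used downstream. Also, you silently read condition (4) of Definition~\ref{def:pminpmax} as ``$\sigma^2\neq0$'' rather than the literal ``$\sigma^2\neq2$''; that is the intended reading, and the paper does the same.
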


We shall make use of Lemma~\ref{lemma:wolfe}, which provides a result on infinitely divisible law and is proved in~\cite[Theorem 25.3]{Sato1994levy}.
\begin{lemma} \label{lemma:wolfe}
For $Z$ an infinitely divisible random variable with L\'evy measure $V_Z$ and $0<p\leq 2$, we have the equivalence 
\begin{equation} \label{eq:Wolfe}
 \mathbb{E}[|Z|^p] < \infty \Longleftrightarrow \int_{|t|\geq 1} |t|^p V_Z(\drm t) < \infty.
\end{equation}
\end{lemma}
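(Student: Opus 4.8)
The plan is to prove the equivalence \eqref{eq:Wolfe} by isolating the large-jump part of $Z$, which is the only contribution able to spoil a finite $p$-th moment. Since $V_Z$ is a L\'evy measure, $\int_{\R}\min(1,t^2)\,V_Z(\drm t)<\infty$ forces both $\lambda:=V_Z(\{|t|\geq1\})<\infty$ and $\int_{|t|<1}t^2\,V_Z(\drm t)<\infty$. If $\lambda=0$ the right-hand side of \eqref{eq:Wolfe} is trivially finite and, as shown below, so is $\mathbb{E}[|Z|^p]$; hence both sides hold and there is nothing to prove. Assuming $\lambda>0$, I would use the L\'evy--Khintchine representation \eqref{eq:levyexponent} to decompose $Z\overset{(d)}{=}Z_0+Z_1$ into independent summands, where $Z_0$ is infinitely divisible with triplet $(\mu,\sigma^2,V_Z|_{\{|t|<1\}})$ and $Z_1=\sum_{i=1}^{N}Y_i$ is compound-Poisson with $N\sim\mathrm{Poisson}(\lambda)$ and $Y_i$ i.i.d.\ of law $V_Z|_{\{|t|\geq1\}}/\lambda$, independent of $N$.

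First I would dispose of $Z_0$: its L\'evy measure carries no mass on $\{|t|\geq1\}$, so $Z_0$ has finite variance (equal to $\sigma^2+\int_{|t|<1}t^2\,V_Z(\drm t)$) and finite mean, whence $\mathbb{E}[|Z_0|^2]<\infty$; Lyapunov's inequality then gives $\mathbb{E}[|Z_0|^p]<\infty$ for every $0<p\leq2$. Using the elementary bound $|a+b|^p\leq c_p(|a|^p+|b|^p)$ (with $c_p=1$ for $p\leq1$ and $c_p=2^{p-1}$ for $1<p\leq2$) in both directions, finiteness of $\mathbb{E}[|Z|^p]$ is thus equivalent to finiteness of $\mathbb{E}[|Z_1|^p]$, reducing the problem to the compound-Poisson variable $Z_1$.

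It then remains to show $\mathbb{E}[|Z_1|^p]<\infty\iff\int_{|t|\geq1}|t|^p\,V_Z(\drm t)<\infty$, where the right-hand integral equals $\lambda\,\mathbb{E}[|Y_1|^p]$. Conditioning on $N$ gives $\mathbb{E}[|Z_1|^p]=\sum_{n\geq0}\ee^{-\lambda}\tfrac{\lambda^n}{n!}\,\mathbb{E}[|Y_1+\cdots+Y_n|^p]$, whose $n=1$ term is $\ee^{-\lambda}\lambda\,\mathbb{E}[|Y_1|^p]$; since all terms are nonnegative, $\mathbb{E}[|Z_1|^p]<\infty$ forces $\mathbb{E}[|Y_1|^p]<\infty$, which is the forward implication. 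For the converse, assume $\mathbb{E}[|Y_1|^p]<\infty$. When $0<p\leq1$, subadditivity of $t\mapsto t^p$ yields $\mathbb{E}[|Y_1+\cdots+Y_n|^p]\leq n\,\mathbb{E}[|Y_1|^p]$, so summing the Poisson series gives $\mathbb{E}[|Z_1|^p]\leq\mathbb{E}[N]\,\mathbb{E}[|Y_1|^p]=\lambda\,\mathbb{E}[|Y_1|^p]<\infty$.

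The delicate case, and the one I expect to be the main obstacle, is $1<p\leq2$, where subadditivity fails and the jumps need not have finite variance. Here $p>1$ guarantees $\mathbb{E}|Y_1|<\infty$, so I would center the jumps and invoke the von Bahr--Esseen inequality $\mathbb{E}\big[|\sum_{i=1}^n(Y_i-\mathbb{E}Y_1)|^p\big]\leq2n\,\mathbb{E}[|Y_1-\mathbb{E}Y_1|^p]$ for $1\leq p\leq2$, combined with $Z_1=\sum_{i=1}^N(Y_i-\mathbb{E}Y_1)+N\,\mathbb{E}Y_1$ and $|a+b|^p\leq2^{p-1}(|a|^p+|b|^p)$. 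Conditioning on $N$ and summing then yields $\mathbb{E}[|Z_1|^p]\leq2^{p-1}\big(2\lambda\,\mathbb{E}[|Y_1-\mathbb{E}Y_1|^p]+|\mathbb{E}Y_1|^p\,\mathbb{E}[N^p]\big)$, which is finite because $\mathbb{E}[|Y_1-\mathbb{E}Y_1|^p]<\infty$ and the Poisson variable $N$ has moments of all orders. The crux is precisely securing a linear-in-$n$ $p$-th moment bound for sums of i.i.d.\ centered variables when $1<p\leq2$; once that inequality is available, the factorial decay of the Poisson weights makes the series converge automatically.
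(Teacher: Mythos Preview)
Your proof is correct. Note, however, that the paper does not actually prove this lemma: it simply invokes \cite[Theorem~25.3]{Sato1994levy}, which is a general moment criterion for infinitely divisible laws stated for submultiplicative locally bounded functions $g$ (of which $g(t)=(1+|t|)^p$ is an instance). Your argument is a self-contained alternative tailored to the range $0<p\leq 2$: the L\'evy--It\^o split $Z=Z_0+Z_1$ isolates the compound-Poisson large-jump part, the small-jump part $Z_0$ is dispatched via its finite variance, and the reduction to $\mathbb{E}[|Y_1|^p]$ is handled by subadditivity for $p\leq 1$ and by von Bahr--Esseen for $1<p\leq 2$. Sato's proof proceeds instead through tail estimates and exponential moments and covers all $p>0$; your route is more elementary but genuinely relies on $p\leq 2$ at two places (bounding $\mathbb{E}[|Z_0|^p]$ by the variance, and the von Bahr--Esseen inequality), which is exactly the range the paper requires.
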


\begin{proof}[Proof of Proposition~\ref{prop:Poisson_exponent}]
Note first  that both $\tau f$ and $f_{\lambda,\tau}$ are L\'evy exponents.   Let $(\mu,\sigma^2,V)$ be the $(p_{\min},p_{\max})$-triplet associated with $f$. The L\'evy triplet of $f_{\lambda,\tau}$ is 
\begin{equation}
(\lambda \mu_{P_{\tau f}},0,\lambda P_{\tau f}),
\end{equation} 
where we recall that $P_{\tau f}$ is the compound-Poisson law with rate $\lambda = 1$ and law of jumps corresponding to the infinitely divisible random variable with L\'evy exponent $\tau f$. In addition,
\begin{equation}
\mu_{P_{\tau f}} = \int_{0<|t|< 1} t P_{\tau f} (\drm t).
\end{equation} 

Let $X$ (respectively, $Y$) be an infinitely divisible random variable with L\'evy exponent $f$ ($f_{\lambda,\tau}$, respectively).  

\paragraph{Let $\mu = \sigma^2 = 0$ and $V$ Be Symmetric}  In this case, we have that $\mu_{P_{\tau f}} = 0$ and $P_{\tau f}$ is symmetric, so that $f_{\lambda,\tau}$ is a $(p_{\min},p_{\max})$-exponent if and only if 
\begin{equation}\label{eq:lessobvious}
\int_{|t|\geq 1}  \lvert t \rvert^{p_{\min}}P_{\tau f}(\drm t) < \infty
\end{equation}
\begin{equation}\label{eq:obvious}
\int_{0<|t|< 1}  \lvert t \rvert^{p_{\max}} P_{\tau f}(\drm t) < \infty.
\end{equation}
Because $P_{\tau f}$ is a probability measure, \eqref{eq:obvious} is obvious.
Based on Lemma~\ref{lemma:wolfe}, \eqref{eq:lessobvious} is equivalent to the condition 
\begin{equation}
\mathbb{E}[|Y|^{p_{\min}}]  < \infty.
\end{equation}
The random variable $Y$ being   compound-Poisson, we have that
\begin{equation}
Y \overset{(d)}{=} \sum_{i=1}^N X_i
\end{equation}
with $N$ a Poisson random variable of parameter $\lambda$ and $(X_i)_{i \in \N}$ an i.i.d. vector with common law $P_{\tau f}$. 

Let us fix $x,y \in \R$. 
If $0<p< 1$, then we have that
\begin{equation}
 \lvert x + y \rvert^p \leq \lvert x \rvert^p + \lvert y \rvert^p.
 \end{equation}
On the contrary, if $1\leq p \leq 2$, then the inequality
\begin{equation}
	   \left\lvert \frac{x + y}{2} \right\rvert^p \leq  \frac{ \lvert x \rvert^p + \lvert y\rvert^p}{2}
\end{equation}
follows from the convexity of $x \mapsto x^p$ on $\R^+$.
From these two inequalities, we see that for any $0<p\leq 2$ and $(x_i)_{1\leq i\leq N}$,
\begin{equation}
 	\Big\rvert  \sum_{i=1}^N x_i\Big\rvert^p \leq  N^{\max(p-1, 0)} \sum_{i=1}^N |x_i|^p \leq N   \sum_{i=1}^N |x_i|^p .
\end{equation}
Therefore, we have that
\begin{align}
	\mathbb{E}[|Y|^{p_{\min}}] =& \ \mathbb{E} \Big[	\Big\lvert  \sum_{i=1}^N X_i\Big\rvert^{p_{\min}}\Big] \nonumber\\
					\leq&  \ \mathbb{E} \Big[N\sum_{i=1}^N |X_i|^{p_{\min}} \Big] \nonumber\\
					=& \sum_{n \geq 0}  n  \mathbb{P}(N=n)     \mathbb{E} \Big[  \sum_{i=1}^n |X_i|^{p_{\min}} \Big]   \nonumber\\
 					=& \left( \sum_{n\geq0} n^2 \mathbb{P}(N=n)   \right) \times \mathbb{E} \left[ |X_1|^{p_{\min}} \right] \nonumber\\
					=& \mathbb{E}[N^2] \times \mathbb{E} \left[ |X_1|^{p_{\min}} \right] \nonumber\\
					<& \ \infty.
\end{align}
This shows that  $f_{\lambda,\tau}$ is a  $(p_{\min},p_{\max})$-exponent.  \\

\paragraph{General Case}
By assumption, $(\mu,\sigma^2,V)$ is a $(p_{\min},p_{\max})$-triplet, so that there exist $p,q$ such that $p_{\min} \leq p \leq q \leq p_{\min}$ and
\begin{equation}
\int_{|t|\geq 1}  \lvert t \rvert^{p} V(\drm t) < \infty \ \mbox{and} \ \int_{|t|< 1}  \lvert t \rvert^{q} V(\drm t)< \infty.
\end{equation}
As we did for Case \textit{a)}, we deduce that 
\begin{equation}
\int_{|t|\geq 1}  \lvert t \rvert^{p} P_{\tau f}(\drm t) < \infty \ \mbox{and} \ \int_{|t|< 1}  \lvert t \rvert^{q} P_{\tau f}(\drm t)< \infty.
\end{equation}
This means that the L\'evy measure $P_{\tau f}$ of $f_{\lambda,\tau}$ satisfies the first and second conditions in Definition~\ref{def:pminpmax}.
Moreover, if either $V$ is non-symmetric or $\mu \neq 0$, then either $P_{\tau f}$ is not symmetric or $\mu_{P_{\tau f}} \neq 0$. However, in this case $p_{\min} \leq 1$, so that the third condition in Definition~\ref{def:pminpmax} is satisfied.
Similarly, if $\sigma^2 \neq 0$, then $p_{\max} = 2$ and the fourth condition in Definition~\ref{def:pminpmax} is satisfied. Hence, $f_{\lambda,\tau}$ is a \mbox{$(p_{\min},p_{\max})$-exponent}.  
\end{proof}

\begin{corollary} \label{coro:compatible}
Let $\Lop$ be a spline-admissible operator and $w$ a L\'evy white noise with L\'evy exponent $f$.
Then, $\Lop$ is compatible with any impulsive noise with rate $\lambda>0$ and jump-size law $P_{\tau f}$ for $\tau >0$.
\end{corollary}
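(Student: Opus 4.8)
The plan is to deduce this directly from Proposition~\ref{prop:Poisson_exponent} together with the definition of compatibility (Definition~\ref{def:compa}). First I would recall that, by hypothesis, $w$ is a L\'evy white noise with L\'evy exponent $f$; by Definition~\ref{def:innovation} such an $f$ satisfies the Schwartz condition, hence is an $(\epsilon,2)$-exponent for some $0<\epsilon\leq 2$, and more to the point, whenever we speak of compatibility we have fixed $0<p_{\min}\leq p_{\max}\leq 2$ such that $f$ is a $(p_{\min},p_{\max})$-exponent and $\Lop^*$ admits a left inverse $\Top$ that is continuous from $\S(\R^d)$ into $L_{p_{\min}}(\R^d)\cap L_{p_{\max}}(\R^d)$. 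The crucial observation is that the second bullet of Definition~\ref{def:compa}, concerning $\Lop^*$ and $\Top$, depends only on the pair $(p_{\min},p_{\max})$ and \emph{not} on the particular noise; so it is automatically inherited by any other noise whose L\'evy exponent is a $(p_{\min},p_{\max})$-exponent.

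The remaining step is to check the first bullet for the impulsive noise of rate $\lambda>0$ and jump-size law $P_{\tau f}$. By the Proposition established just before (the computation identifying the L\'evy exponent of an impulsive noise), such a noise has L\'evy exponent $\lambda(\widehat{P_{\tau f}}(\xi)-1)=\lambda(\ee^{\tau f(\xi)}-1)=f_{\lambda,\tau}(\xi)$, in the notation of Proposition~\ref{prop:Poisson_exponent}. That proposition asserts precisely that if $f$ is a $(p_{\min},p_{\max})$-exponent then so is $f_{\lambda,\tau}$, for every $\lambda>0$ and $\tau\neq 0$ (in particular for $\tau>0$). Hence both bullets of Definition~\ref{def:compa} hold with the same $(p_{\min},p_{\max})$, which is exactly the statement that $\Lop$ is compatible with this impulsive noise.

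I do not anticipate any real obstacle here: the corollary is essentially a bookkeeping consequence of Proposition~\ref{prop:Poisson_exponent} plus the noise-independence of the operator condition. The only point requiring a word of care is the matching of notation: one must verify that the impulsive noise ``with rate $\lambda$ and jump-size law $P_{\tau f}$'' is indeed the generalized Poisson noise whose L\'evy exponent is the $f_{\lambda,\tau}$ of the proposition, which is immediate from $\widehat{P_{\tau f}}=\ee^{\tau f}$ and the formula $\CF_w(\varphi)=\exp(\lambda\int_{\R^d}(\ee^{\ii\varphi(\bm{x})t}-1)\,P_{\tau f}(\drm t)\,\drm\bm{x})$ for an impulsive noise. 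Once this identification is made, the conclusion follows word for word.
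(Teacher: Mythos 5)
Your proposal is correct and follows essentially the same route as the paper: invoke Proposition~\ref{prop:Poisson_exponent} to see that $f_{\lambda,\tau}$ is a $(p_{\min},p_{\max})$-exponent and then observe that the operator condition of Definition~\ref{def:compa} depends only on the pair $(p_{\min},p_{\max})$, not on the noise. Your write-up is in fact slightly more explicit than the paper's one-line proof about identifying the impulsive noise's exponent with $f_{\lambda,\tau}$ and about the implicit hypothesis that $(\Lop,w)$ is compatible.
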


\begin{proof}
Knowing that $f_{\lambda,\tau}$ is a  $(p_{\min},p_{\max})$-exponent, we deduce from Definition~\ref{def:compa} that $(\Lop, w_{\lambda,\tau})$ is compatible, where $w_{\lambda,\tau}$ is the impulsive noise with L\'evy exponent $f_{\lambda,\tau}$. 
\end{proof}

	\subsection{Generalized L\'evy Processes as Limits of Generalized Poisson Processes}

\begin{lemma} \label{lemma:conv_CF}
Let $f$ be  a $(p_{\min},p_{\max})$-exponent for some $0<p_{\min} \leq p_{\max} \leq 2$ and let $w$ be the associated L\'evy white noise. 
Let $f_n$ be the L\'evy exponent defined by 
\begin{equation} \label{eq:fn}
f_n(\xi) = n \left( \ee^{f(\xi)/n}-1\right)
\end{equation}
and let $w_n$ be the impulsive noise with exponent $f_n$.
Then, for every $\varphi \in L_{p_{\min}}(\R^d) \cap  L_{p_{\max}}(\R^d)$, we have that
\begin{equation} \label{eq:conv_CF}
	\CF_{w_n} (\varphi) \underset{n\rightarrow \infty}{\longrightarrow} \CF_w(\varphi).
\end{equation}
\end{lemma}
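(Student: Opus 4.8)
The plan is to reduce the convergence of characteristic functionals to a pointwise-in-$\bm{x}$ convergence of the integrands, together with a dominated-convergence argument. Recall that by Corollary~\ref{coro:technical} (applied both to $f$ and to each $f_n$, the latter being $(p_{\min},p_{\max})$-exponents by Proposition~\ref{prop:Poisson_exponent} with $\tau = 1/n$), both $\CF_w$ and $\CF_{w_n}$ extend continuously to $L_{p_{\min}}(\R^d)\cap L_{p_{\max}}(\R^d)$, so the statement is meaningful. Writing
\begin{equation}
	\CF_{w_n}(\varphi) = \exp\left( \int_{\R^d} f_n(\varphi(\bm{x})) \,\drm\bm{x} \right), \qquad
	\CF_{w}(\varphi) = \exp\left( \int_{\R^d} f(\varphi(\bm{x})) \,\drm\bm{x} \right),
\end{equation}
and using continuity of the exponential, it suffices to show that $\int_{\R^d} f_n(\varphi(\bm{x}))\,\drm\bm{x} \to \int_{\R^d} f(\varphi(\bm{x}))\,\drm\bm{x}$.

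First I would establish the pointwise limit: for each fixed $\bm{x}$, $f_n(\varphi(\bm{x})) = n(\ee^{f(\varphi(\bm{x}))/n}-1) \to f(\varphi(\bm{x}))$ as $n\to\infty$, which is just the elementary fact that $n(\ee^{z/n}-1)\to z$ for any $z\in\C$ (one can see this from the power series, since $n(\ee^{z/n}-1) = z + z^2/(2n) + \cdots$). Next I would produce an integrable dominating function. The key inequality is $|\ee^z - 1| \le |z|\ee^{|z|} \le |z|\ee^{|\Re z|}$ when... more carefully: $|n(\ee^{z/n}-1)| = |\sum_{k\ge 1} z^k/(n^{k-1}k!)| \le \sum_{k\ge 1}|z|^k/k! = \ee^{|z|}-1 \le |z|\ee^{|z|}$ for $n\ge 1$. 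Since $\Re f \le 0$ for any L\'evy exponent, $|f(\varphi(\bm{x}))|$ need not be small, so instead I would invoke Proposition~\ref{prop:technical}: $|f(\varphi(\bm{x}))| \le \nu_1|\varphi(\bm{x})|^{p_{\min}} + \nu_2|\varphi(\bm{x})|^{p_{\max}}$. Combined with $|n(\ee^{z/n}-1)|\le \ee^{|z|}-1$, this is not directly integrable because of the exponential. The fix is to bound $|n(\ee^{z/n}-1)|$ by $|z|\,\ee^{|z|/n}$ and then, for $n$ large enough relative to $\|\varphi\|_\infty$... but $|z|$ itself is only controlled by $\ee^{|z|}$.

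The clean route is instead to bound uniformly in $n\ge 1$: from $|n(\ee^{z/n}-1)| \le \sum_{k\ge1}|z|^k/(n^{k-1}k!) \le |z| + \sum_{k\ge 2}|z|^k/k! = |z| + \ee^{|z|} - 1 - |z|$... this still has the exponential. So the genuinely correct domination uses that $\varphi$ is bounded: $M := \|\varphi\|_\infty < \infty$ (which holds since $\varphi\in L_{p_{\min}}\cap L_{p_{\max}}$ need not imply boundedness in general — here one should restrict attention to $\varphi\in\S(\R^d)$, where Lemma~\ref{lemma:conv_CF} will ultimately be applied, or note that for the extended domain one approximates; the safest is to prove it for $\varphi\in\S(\R^d)$ first and then extend by the continuity in Corollary~\ref{coro:technical} and an $\varepsilon/3$ argument). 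For $\varphi\in\S(\R^d)$, set $M = \|\varphi\|_\infty$; then for all $\bm{x}$ and all $n\ge 1$, $|f(\varphi(\bm{x}))/n| \le |f(\varphi(\bm{x}))| \le \nu_1 M^{p_{\min}} + \nu_2 M^{p_{\max}} =: C_M$, whence $|n(\ee^{f(\varphi(\bm{x}))/n}-1)| \le \ee^{C_M}|f(\varphi(\bm{x}))| \le \ee^{C_M}(\nu_1|\varphi(\bm{x})|^{p_{\min}} + \nu_2|\varphi(\bm{x})|^{p_{\max}})$, using $|\ee^w - 1| \le |w|\ee^{|w|}$ with $w = f(\varphi(\bm{x}))/n$ and $|w|\le C_M$. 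The right-hand side is in $L_1(\R^d)$ since $\varphi\in L_{p_{\min}}\cap L_{p_{\max}}$. Dominated convergence then gives $\int f_n(\varphi)\to\int f(\varphi)$, hence \eqref{eq:conv_CF} for $\varphi\in\S(\R^d)$.

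Finally, to reach the full statement for $\varphi\in L_{p_{\min}}(\R^d)\cap L_{p_{\max}}(\R^d)$, I would use density of $\S(\R^d)$ in $L_{p_{\min}}\cap L_{p_{\max}}$ together with the equicontinuity furnished by Corollary~\ref{coro:technical}: given $\varphi$ in the larger space, pick $\psi\in\S(\R^d)$ close to it, bound $|\CF_{w_n}(\varphi) - \CF_w(\varphi)|$ by $|\CF_{w_n}(\varphi)-\CF_{w_n}(\psi)| + |\CF_{w_n}(\psi)-\CF_w(\psi)| + |\CF_w(\psi)-\CF_w(\varphi)|$, control the first and third terms uniformly in $n$ via the continuity estimate $|\log\CF_{w_n}(\varphi)| \le \nu_1\|\varphi\|_{p_{\min}}^{p_{\min}} + \nu_2\|\varphi\|_{p_{\max}}^{p_{\max}}$ (with constants uniform in $n$, since the $\nu_i$ for $f_n$ can be taken independent of $n$ — this is the one point that needs a brief check, traceable through the proof of Proposition~\ref{prop:Poisson_exponent}), and the middle term by the $\S(\R^d)$-case. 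The main obstacle, then, is not the pointwise limit but arranging the domination uniformly in $n$: the exponential $|\ee^w-1|$ is tamed only because $\Re$ and $\mathrm{Im}$ of $f(\varphi(\bm{x}))$ stay in a fixed compact set when $\varphi$ is bounded, so the reduction to $\varphi\in\S(\R^d)$ (or equivalently to bounded $\varphi$) followed by a density/equicontinuity extension is the crux of the argument.
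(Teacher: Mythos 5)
Your overall strategy (pointwise convergence of $f_n(\varphi(\bm{x}))$ plus dominated convergence) is the right one, but there is a genuine gap at exactly the point you flag as the crux: you never produce a dominating function that works for unbounded $\varphi\in L_{p_{\min}}(\R^d)\cap L_{p_{\max}}(\R^d)$, and the patch you propose --- prove the result for $\varphi\in\S(\R^d)$ and extend by density plus equicontinuity of the $\CF_{w_n}$ --- hinges on the claim that the constants $\nu_1,\nu_2$ in the bound $\lvert f_n(\xi)\rvert\le\nu_1\lvert\xi\rvert^{p_{\min}}+\nu_2\lvert\xi\rvert^{p_{\max}}$ can be taken uniform in $n$. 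You call this ``a brief check, traceable through the proof of Proposition~\ref{prop:Poisson_exponent},'' but that proposition only establishes \emph{finiteness} of the relevant moments of the Lévy measure $nP_{f/n}$ for each fixed $n$; it gives no control on how these quantities scale with $n$, so the uniformity does not follow from it. As written, your argument is therefore incomplete both in the bounded case's extension and in the equicontinuity step.

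The missing ingredient --- which simultaneously closes your gap and renders the whole reduction-to-$\S(\R^d)$ detour unnecessary --- is the observation that the real part of any Lévy exponent is nonpositive, $\Re f(\xi)=-\tfrac{\sigma^2\xi^2}{2}+\int_\R(\cos(t\xi)-1)V(\drm t)\le 0$, combined with the elementary inequality
\begin{equation}
\lvert \ee^z-1\rvert \;=\;\bigl\lvert \ee^{\ii y}(\ee^{x}-1)+(\ee^{\ii y}-1)\bigr\rvert\;\le\;\lvert x\rvert+\lvert y\rvert\;\le\;\sqrt{2}\,\lvert z\rvert,
\qquad z=x+\ii y,\ x\le 0,
\end{equation}
which follows from $1-\ee^{x}\le -x$ for $x\le0$ and $\lvert\ee^{\ii y}-1\rvert=2\lvert\sin(y/2)\rvert\le\lvert y\rvert$. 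Applying this with $z=f(\varphi(\bm{x}))/n$ gives $\lvert f_n(\varphi(\bm{x}))\rvert=n\lvert\ee^{f(\varphi(\bm{x}))/n}-1\rvert\le\sqrt{2}\,\lvert f(\varphi(\bm{x}))\rvert$ for every $n\ge1$ and every $\bm{x}$, with no exponential factor and no boundedness assumption on $\varphi$. Since $\bm{x}\mapsto\lvert f(\varphi(\bm{x}))\rvert$ is in $L_1(\R^d)$ by Proposition~\ref{prop:technical}, dominated convergence applies directly on all of $L_{p_{\min}}(\R^d)\cap L_{p_{\max}}(\R^d)$; this is the route the paper takes. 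Note that the same inequality is also the cleanest way to obtain the uniform-in-$n$ constants ($\nu_i^{(n)}=\sqrt{2}\,\nu_i$) that your density argument would require, so your plan can be repaired, but only by supplying precisely the estimate you were searching for when you wrote ``Since $\Re f\le 0$ \dots'' and then abandoned that thread in favor of $\lvert\ee^{w}-1\rvert\le\lvert w\rvert\ee^{\lvert w\rvert}$.
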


\begin{proof}
	First of all, the function $f_n$ is the L\'evy exponent   associated to the compound-Poisson law with rate $n$ and jump-size law with L\'evy exponent $f/n$. Let $\varphi  \in L_{p_{\min}}(\R^d) \cap  L_{p_{\max}}(\R^d)$. According to Corollary~\ref{coro:technical}, $\CF_w(\varphi)$ is well-defined. From Proposition~\ref{prop:Poisson_exponent} (applied with $\lambda = 1/\tau=n$), we also know that $f_n$ is a \mbox{$(p_{\min},p_{\max})$-exponent}, so that $\CF_{w_n}(\varphi)$ is also well-defined. We can now prove the convergence.
For every fixed $\bm{x} \in \R^d$, we have that
\begin{equation}
f_n(\varphi(\bm{x})) = n\left( \ee^{f(\varphi(\bm{x}))/n}-1 \right) \underset{n\rightarrow \infty}{\longrightarrow} f(\varphi(\bm{x})).
\end{equation}
Let $x\leq0$ and $y\in \R$. Due to the convexity of the exponential, 
\begin{equation}
\lvert \ee^x -1\rvert \leq |x|.
\end{equation}
Moreover, 
\begin{equation}
\lvert \ee^{\ii y} -1 \rvert = 2 \lvert \sin(y/2)\rvert \leq \lvert y \rvert.
\end{equation}
Thus, for $z = x + \ii y$, 
\begin{equation}
	\lvert \ee^z -1 \rvert = \lvert \ee^{\ii y}( \ee^x -1) + \ee^{\ii y} -1 \rvert \leq \lvert x \rvert + \lvert y \rvert \leq \sqrt{2} \lvert z \rvert.
\end{equation}
Consequently, the real part of $f(\varphi)$, given by
\begin{equation}
\Re \left( f (\varphi(\bm{x})) \right) = \int_{\R} (\cos(t\varphi(\bm{x})) -1 ) V(\drm t) - \frac{\sigma^2\varphi(\bm{x})^2}{2},
\end{equation}
is negative and we have that
 \begin{equation}
 \lvert f_n(\varphi(\bm{x})) \rvert = n \lvert \ee^{f(\varphi(\bm{x}))/n}-1 \rvert \leq \sqrt{2} \lvert f(\varphi(\bm{x})) \rvert.
 \end{equation}
The function $\bm{x} \mapsto \lvert f(\varphi(\bm{x}))\rvert$  being in $L_1(\R^d)$ according to Proposition~\ref{prop:technical}, we can apply  the Lebesgue dominated-convergence theorem to deduce that 
\begin{equation}
\int_{\R^d}f_n(\varphi(\bm{x})) \drm \bm{x} \underset{n\rightarrow \infty}{\rightarrow} \int_{\R^d}f(\varphi(\bm{x})) \drm\bm{x}
\end{equation}
and, therefore, \eqref{eq:conv_CF} holds.
\end{proof}

\begin{theorem} \label{maintheo_precise}
Let $(\Lop, w)$ be compatible and let $s = \Lop^{-1} w$. The L\'evy exponent of $w$ is denoted by $f$. For $n\geq 1$, we set $w_n$ the impulsive noise with L\'evy exponent $f_n$ defined in \eqref{eq:fn} and $s_n = \Lop^{-1} w_n$ the associated generalized Poisson process. Then,
\begin{equation} \label{eq:final}
	s_n \overset{(d)}{\underset{n\rightarrow\infty}{\longrightarrow}} s.
\end{equation}
\end{theorem}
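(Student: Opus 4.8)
The plan is to reduce the convergence in law of the generalized Poisson processes $s_n$ to the pointwise convergence of their characteristic functionals, and then to invoke the work already done in Lemma~\ref{lemma:conv_CF}. By the Fernique--L\'evy theorem (Theorem~\ref{theo:Levy}) applied with the nuclear space $\mathcal{N} = \S(\R^d)$, the statement $s_n \overset{(d)}{\longrightarrow} s$ is equivalent to $\CF_{s_n}(\varphi) \to \CF_{s}(\varphi)$ for every $\varphi \in \S(\R^d)$. So the entire proof amounts to establishing this pointwise convergence of characteristic functionals.

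First I would make the compatibility bookkeeping explicit. Since $(\Lop,w)$ is compatible, there exist $0 < p_{\min} \leq p_{\max} \leq 2$ such that $f$ is a $(p_{\min},p_{\max})$-exponent and $\Lop^*$ has a left inverse $\Top$ that is linear and continuous from $\S(\R^d)$ to $L_{p_{\min}}(\R^d) \cap L_{p_{\max}}(\R^d)$. By Corollary~\ref{coro:compatible} (equivalently, Proposition~\ref{prop:Poisson_exponent} with $\lambda = 1/\tau = n$), each $f_n$ is again a $(p_{\min},p_{\max})$-exponent, so $(\Lop,w_n)$ is compatible with the \emph{same} pair $(p_{\min},p_{\max})$ and the \emph{same} operator $\Top$. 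Consequently both $s = \Lop^{-1}w$ and $s_n = \Lop^{-1}w_n$ are well-defined generalized L\'evy processes, and their characteristic functionals satisfy, for every $\varphi \in \S(\R^d)$,
\begin{equation}
\CF_{s}(\varphi) = \CF_{w}(\Top\{\varphi\}), \qquad \CF_{s_n}(\varphi) = \CF_{w_n}(\Top\{\varphi\}).
\end{equation}

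Next I would fix $\varphi \in \S(\R^d)$ and set $\psi = \Top\{\varphi\}$. By the continuity of $\Top$ into $L_{p_{\min}}(\R^d) \cap L_{p_{\max}}(\R^d)$, we have $\psi \in L_{p_{\min}}(\R^d) \cap L_{p_{\max}}(\R^d)$. Lemma~\ref{lemma:conv_CF} applies verbatim to this $\psi$ and gives $\CF_{w_n}(\psi) \to \CF_{w}(\psi)$ as $n \to \infty$, i.e.\ $\CF_{s_n}(\varphi) \to \CF_{s}(\varphi)$. Since $\varphi \in \S(\R^d)$ was arbitrary, the pointwise convergence of characteristic functionals holds on all of $\S(\R^d)$, and Theorem~\ref{theo:Levy} yields \eqref{eq:final}.

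The main obstacle is not in this final assembly, which is essentially a chaining of earlier results, but rather lies entirely in Lemma~\ref{lemma:conv_CF} and in the verification that the dominating function is integrable: one must extend the domain of $\CF_w$ from $\S(\R^d)$ to $L_{p_{\min}}(\R^d) \cap L_{p_{\max}}(\R^d)$ (via Corollary~\ref{coro:technical}, itself relying on the growth bound of Proposition~\ref{prop:technical}), and one must control $|f_n(\psi(\bm{x}))| \leq \sqrt{2}\,|f(\psi(\bm{x}))|$ uniformly in $n$ using the negativity of $\Re f$, so that the Lebesgue dominated-convergence theorem can be applied to pass the limit inside the integral $\int_{\R^d} f_n(\psi(\bm{x}))\,\drm\bm{x}$. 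The one subtle point worth stressing is that the bound $(p_{\min},p_{\max})$ must be uniform in $n$; this is precisely what Proposition~\ref{prop:Poisson_exponent} guarantees, and without it the dominating function could not be chosen independently of $n$.
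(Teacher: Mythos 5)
Your proof is correct and follows essentially the same route as the paper's: fix the left inverse $\Top$, use Corollary~\ref{coro:compatible} to ensure each $(\Lop,w_n)$ is compatible, apply Lemma~\ref{lemma:conv_CF} to $\Top\{\varphi\}\in L_{p_{\min}}(\R^d)\cap L_{p_{\max}}(\R^d)$, and conclude with Theorem~\ref{theo:Levy}. Your explicit remark that the $(p_{\min},p_{\max})$ pair is uniform in $n$ is a useful clarification of a point the paper leaves implicit.
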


We note that Theorem~\ref{maintheo_precise} is a reformulation---hence implies---Theorem~\ref{theo:main}, in which we explicitly state the way we approximate the process $s$ with generalized Poisson processes $s_n$. 

\begin{proof}
We fix an operator $\Top$ defined as a left inverse of $\Lop^*$  associated with the compatible couple $(\Lop, w)$ as in Definition~\ref{def:compa}.
For every $n\geq 1$, $(\Lop,w_n)$ is compatible by applying Corollary~\ref{coro:compatible}  with $\lambda = 1/\tau = n$. Hence, the process $s_n$ with characteristic functional $ \CF_{w_n}(\Top  \{\varphi\})$  is well-defined for every $n$. 

Then, for every $\varphi \in \S(\R^d)$, we have by compatibility that
\begin{equation}
\Top   \{\varphi\} \in L_{p_{\min}}(\R^d) \cap L_{p_{\max}} (\R^d).
\end{equation} 
By applying Lemma ~\ref{lemma:conv_CF} to $\Top \{\varphi\}$, we deduce that
\begin{equation}
\CF_{w_n} (\Top \{ \varphi \}) \underset{n\rightarrow \infty}{\longrightarrow} \CF_w(\Top \{\varphi\}).
\end{equation}
For $\varphi \in \S(\R^d)$, we have therefore that
\begin{eqnarray}
	\CF_{s_n}(\varphi) &=& \CF_{w_n}(\Top  \{\varphi\}) \nonumber\\
	& \underset{n\rightarrow\infty}{\longrightarrow} & \CF_{w}(\Top   \{\varphi\}) \nonumber\\
	& = & \CF_{s}(\varphi).
\end{eqnarray}
Finally,  Theorem~\ref{theo:Levy} implies that
\begin{equation}
s_n \overset{(d)}{\underset{n\rightarrow\infty}{\longrightarrow}} s.
\end{equation}
\end{proof}

\section{Simulations} \label{sec:simulations}

Here, we illustrate the convergence result of Theorem~\ref{theo:main} on   generalized L\'evy processes of three types, namely
\begin{itemize}
	\item Gaussian processes based on Gaussian white noise, which are non-sparse;
	\item Laplace processes based on Laplace noise, which are sparse and have finite variance;
	\item Cauchy processes based on Cauchy white noise, our prototypical example of infinite-variance sparse model.
\end{itemize}
For a given white noise $w$ with L\'evy exponent $f$, we consider compound-Poisson processes that follow the principle of Lemma~\ref{lemma:conv_CF}.
Therefore, we consider compound-Poisson white noises with parameter $\lambda$ and law of jumps with L\'evy exponent $\frac{f}{\lambda}$, for increasing values of $\lambda$. 

\begin{table}[t!] 
\centering
\caption{Examples of white noises with their L\'evy exponent.} \label{table:noise}
\begin{tabular}{ccc} 
\hline
\hline
White Noise & Parameters & L\'evy Exponent \\ \hline 
Gaussian 			& $\sigma^2>0$ & $-\frac{\sigma^2 \xi^2}{2}$ 						 \\
Laplace  			& $\sigma^2>0$ & $-\log \left(1 + \frac{\sigma^2 \xi^2}{2} \right)$		 \\
Cauchy			& $c > 0$ 		  & $- c \lvert \xi \rvert$						 \\
Gauss-Poisson		& $\lambda, \sigma^2>0$ & $\lambda \left( \mathrm{e}^{- \frac{\sigma^2 \xi^2}{2\lambda}} - 1 \right)$  \\
Laplace-Poisson	& $\lambda, \sigma^2>0$ & $\lambda \left( \frac{1}{1 + \frac{\sigma^2 \xi^2}{2\lambda}} - 1 \right)$  \\ 
Cauchy-Poisson	& $\lambda, c>0$ 		& $\lambda \left( \mathrm{e}^{- \frac{c \lvert \xi \rvert}{\lambda}} - 1 \right)$  \\ 
\hline
\hline
\end{tabular}
\end{table}

In Table~\ref{table:noise}, we specify the parameters and L\'evy exponents of six types of noise: Gaussian, Laplace, Cauchy, and their corresponding compound-Poisson noises. We name a compound-Poisson noise in relation to the law of its jumps (\emph{e.g.}, the compound-Poisson noise with Gaussian jumps is called a Gauss-Poisson noise). As $\lambda$ increases, the associated compound-Poisson noise features more and more jumps on average ($\lambda$ per unit of volume) and is more and more concentrated towards $0$. For instance, in the Gaussian case, the Gauss-Poisson noise has jumps with variance $\frac{\sigma^2}{\lambda} \underset{\lambda\rightarrow\infty}{\longrightarrow} 0$. To illustrate our results, we provide simulations for the 1-D and 2-D settings. 

	\subsection{Simulations in 1-D}
We illustrate two families of 1-D processes, as given by
\begin{itemize}
\item $(\mathrm{D}+\alpha\mathrm{I}) s=w$, with parameter $\alpha>0$;
\item $\mathrm{D} s= w$.
\end{itemize}
All the processes are plotted on the interval $[0,10]$. We show in
Figure~\ref{fig:1dcauchyDaI} a Cauchy process generated by $\mathrm{D}+\alpha\mathrm{I}$. In Figure~\ref{fig:1dgaussDN} and~\ref{fig:1dlaplaceDN}, we show a Gaussian and a Laplace process, respectively. Both of them are whitened by $D$. In all cases, we first plot the processes generated with an appropriate Poisson noises with increasing values of $\lambda$. Then, we show the processes obtained from the corresponding L\'evy white noise.

Interestingly, we observe that the processes obtained with Poisson noises of small $\lambda$ in Figures~\ref{fig:1dgaussDN} and~\ref{fig:1dlaplaceDN} are very similar. However, their asymptotic processes (large $\lambda$) differ, as expected from the fact that they converge to processes obtained from different L\'evy white noises. 

	\begin{figure}
\centering
\begin{subfigure}[t]{0.22\linewidth}
                \includegraphics[width=\textwidth]{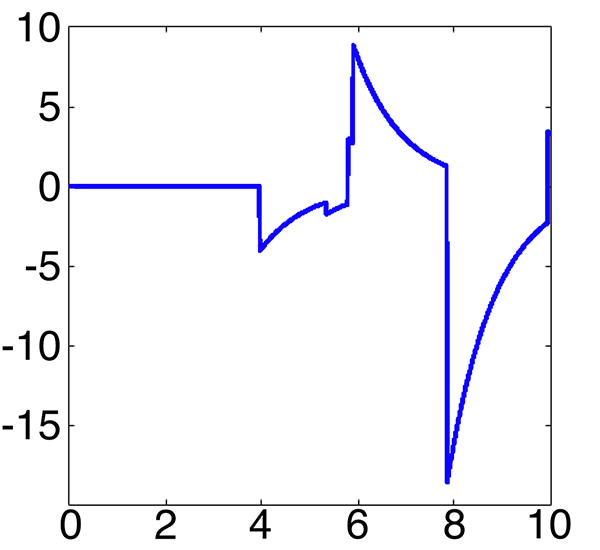}
                \caption{Poisson, $\lambda=0.5$}
                \label{fig:1dcauchyDaIsmalllambda}
\end{subfigure}
\begin{subfigure}[t]{0.22\linewidth}
                \includegraphics[width=\textwidth]{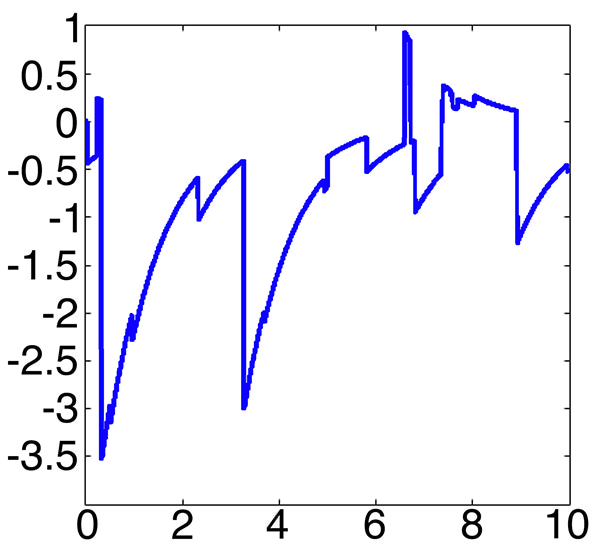}
                \caption{Poisson, $\lambda=3$}
                \label{fig:1dcauchyDaImidlambda}
\end{subfigure}
\begin{subfigure}[t]{0.22\linewidth}
                \includegraphics[width=\textwidth]{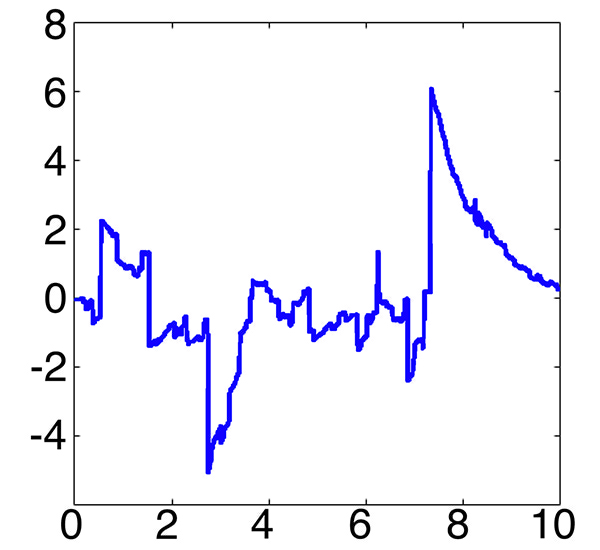}
                \caption{Poisson, $\lambda=100$}
                \label{fig:1dcauchyDaIlargelambda}
\end{subfigure}
\begin{subfigure}[t]{0.22\linewidth}
                \includegraphics[width=\textwidth]{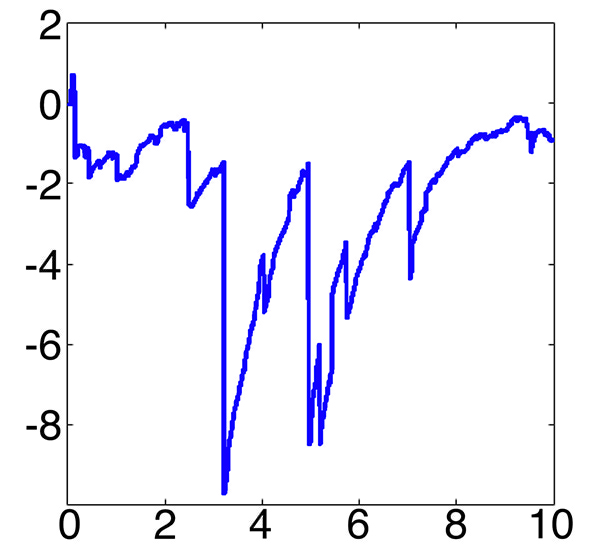}
                \caption{Cauchy, $\lambda\rightarrow \infty$}
                \label{fig:1dcauchyDaIgauss}
\end{subfigure}
\caption{\label{fig:1dcauchyDaI}Processes generated by $\mathrm{D}+\alpha\mathrm{I}$, $\alpha=0.1$, so that $s=(\mathrm{D}+\alpha\mathrm{I})^{-1}w$. In (a)-(c), $w$ is a Poisson noise with Cauchy jumps, with increasing $\lambda$. In (d), $w$ is a Cauchy white noise.}
\end{figure}

	\begin{figure}
\centering
\begin{subfigure}[t]{0.22\linewidth}
                \includegraphics[width=\textwidth]{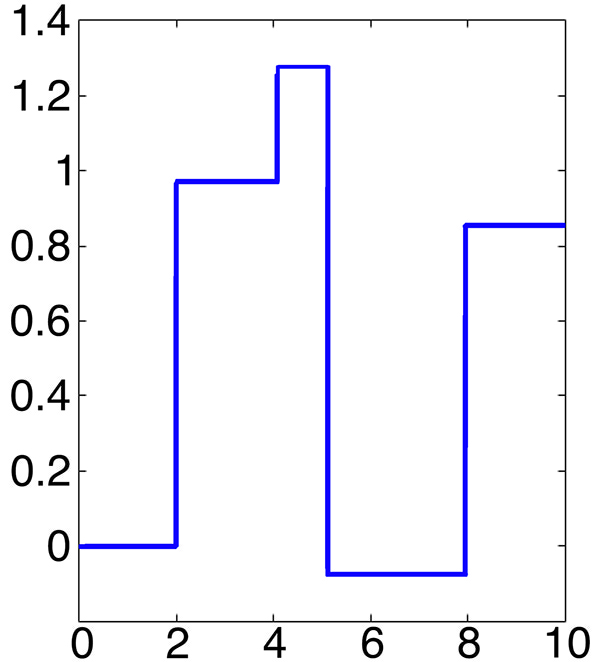}
                \caption{Poisson, $\lambda=0.5$}
                \label{fig:1dgaussDNsmalllambda}
\end{subfigure}
\begin{subfigure}[t]{0.22\linewidth}
                \includegraphics[width=\textwidth]{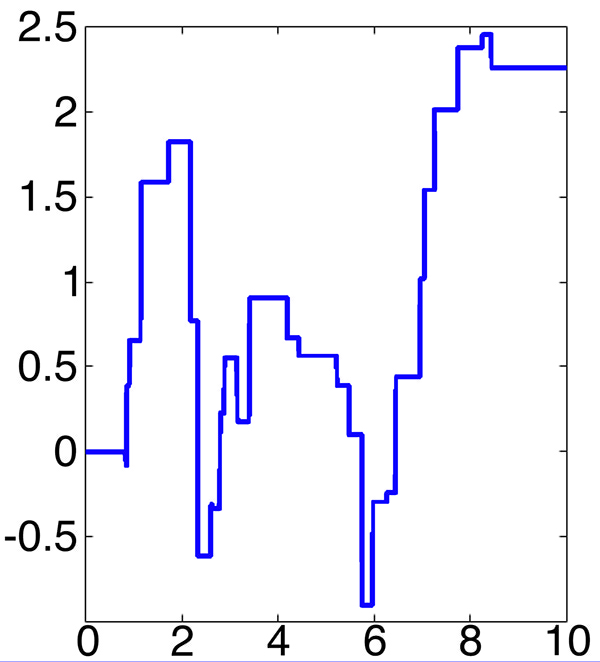}
                \caption{Poisson, $\lambda=3$}
                \label{fig:1dgaussDNmidlambda}
\end{subfigure}
\begin{subfigure}[t]{0.22\linewidth}
                \includegraphics[width=\textwidth]{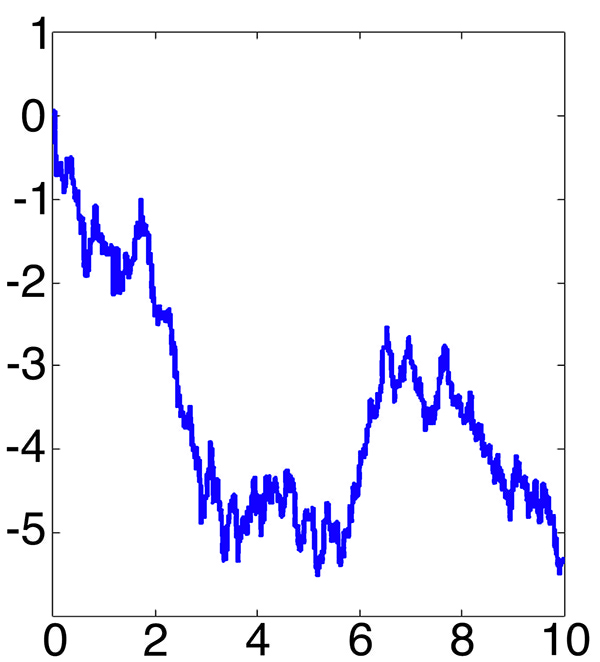}
                \caption{Poisson, $\lambda=100$}
                \label{fig:1dgaussDNlargelambda}
\end{subfigure}
\begin{subfigure}[t]{0.22\linewidth}
                \includegraphics[width=\textwidth]{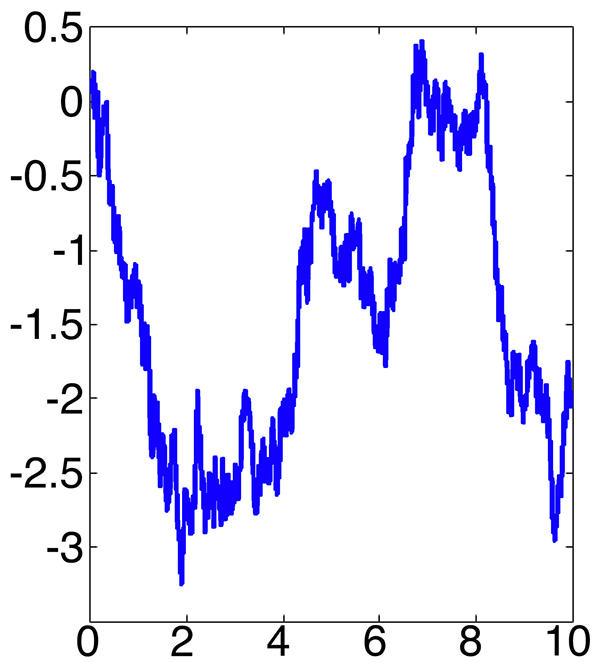}
                \caption{Gaussian, $\lambda\rightarrow \infty$}
                \label{fig:1dgaussDNgauss}
\end{subfigure}
\caption{\label{fig:1dgaussDN}Processes whitened by $\mathrm{D}$. In (a)-(c), $w$ is a Poisson noise with Gaussian jumps, with increasing $\lambda$. In (d), $w$ is a Gaussian white noise.}
\end{figure}

	\begin{figure}
\centering
\begin{subfigure}[t]{0.22\linewidth}
                \includegraphics[width=\textwidth]{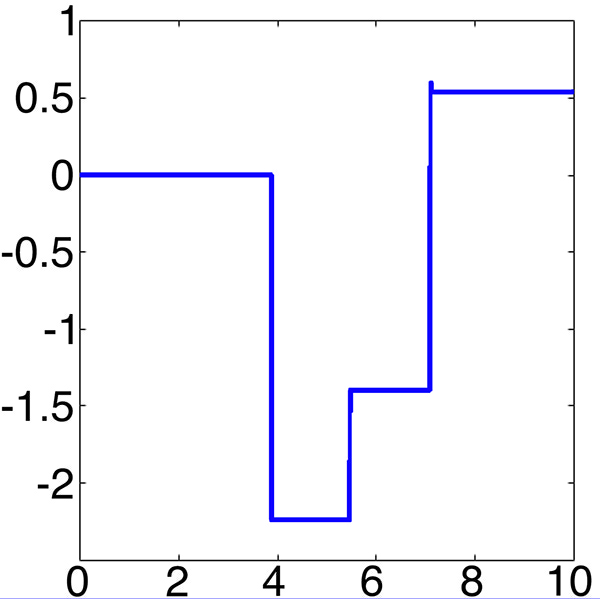}
                \caption{Poisson, $\lambda=0.5$}
                \label{fig:1dlaplaceDNsmalllambda}
\end{subfigure}
\begin{subfigure}[t]{0.22\linewidth}
                \includegraphics[width=\textwidth]{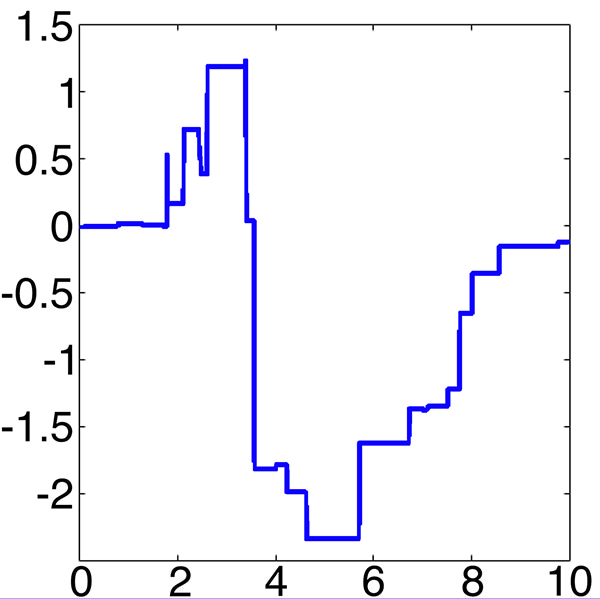}
                \caption{Poisson, $\lambda=3$}
                \label{fig:1dlaplaceDNmidlambda}
\end{subfigure}
\begin{subfigure}[t]{0.22\linewidth}
                \includegraphics[width=\textwidth]{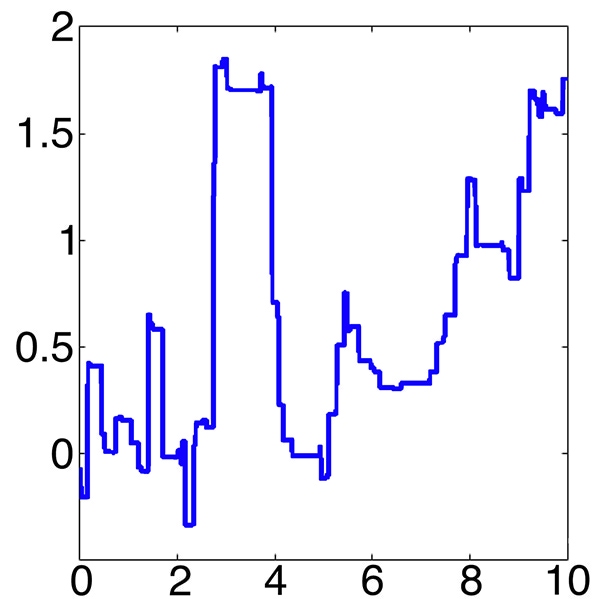}
                \caption{Poisson, $\lambda=100$}
                \label{fig:1dlaplaceDNlargelambda}
\end{subfigure}
\begin{subfigure}[t]{0.22\linewidth}
                \includegraphics[width=\textwidth]{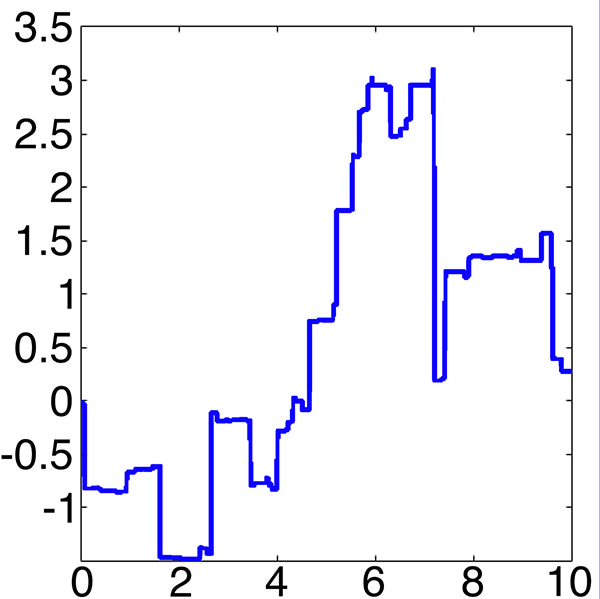}
                \caption{Laplace, $\lambda\rightarrow \infty$}
                \label{fig:1dlaplaceDNgauss}
\end{subfigure}
\caption{\label{fig:1dlaplaceDN}Processes generated by $\mathrm{D}$, so that $s=\mathrm{D}^{-1}w$. In (a)-(c), $w$ is a Poisson noise with Laplace jumps, with increasing $\lambda$. In (d), $w$ is a Laplace white noise.}
\end{figure}

 
	\subsection{Simulations in 2-D}

We illustrate three families of 2-D processes $s$, given as
\begin{itemize}
\item $ \mathrm{D}_x \mathrm{D}_y s=w$;
\item $(\mathrm{D}_x + \alpha \mathrm{I}) (\mathrm{D}_y + \alpha \mathrm{I}) s = w$, with parameter $\alpha>0$;
\item $ \FL s = w$, with parameter $\gamma>0$.
\end{itemize}
We represent our 2-D examples in two ways: first as an image, with gray levels that correspond to the amplitude of the process (lowest value is dark, highest value is white); second as a 3-D plot. All processes are plotted on $[0,10]^2$. 
In Figures~\ref{fig:2dgaussDN} and~\ref{fig:2dgaussDNsurf}, we show a Gaussian process with  $D$ as whitening operator. A Gaussian process generated by the fractional Laplacian $(-\Delta)^\frac{\gamma}{2}$ is illustrated in Figures~\ref{fig:2dgaussLap} and~\ref{fig:2dgaussLapsurf}. Finally, we plot in Figures~\ref{fig:2dlaplaceDaI} and~\ref{fig:2dlaplaceDaIsurf} a Laplace process generated by $\mathrm{D}+\alpha\mathrm{I}$. 
We always first show the process generated with an appropriate Poisson noise with increasing $\lambda$ and then plot the processes obtained from the corresponding L\'evy white noise. 

	\begin{figure}
\centering
\begin{subfigure}[t]{0.22\linewidth}
                \includegraphics[width=\textwidth]{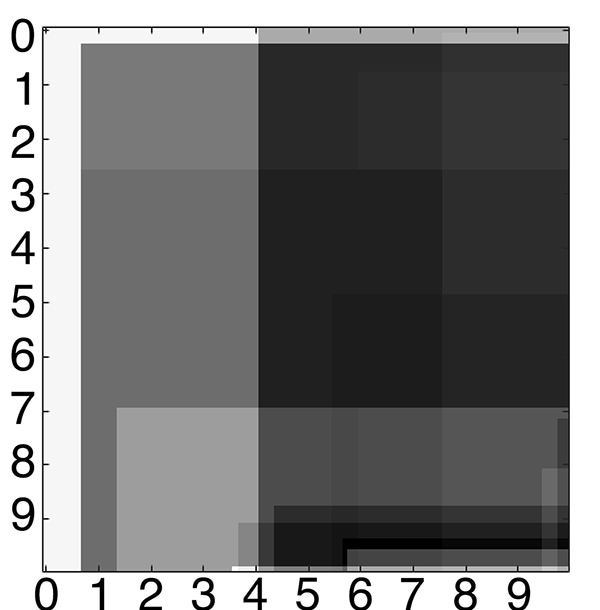}
                \caption{Poisson, $\lambda=0.1$}
                \label{fig:2dgaussDNsmalllambda}
\end{subfigure}
\begin{subfigure}[t]{0.22\linewidth}
                \includegraphics[width=\textwidth]{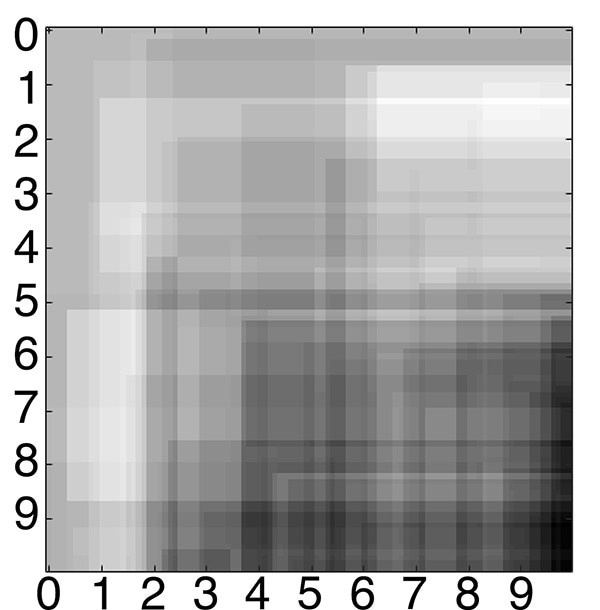}
                \caption{Poisson, $\lambda=1$}
                \label{fig:2dgaussDNmidlambda}
\end{subfigure}
\begin{subfigure}[t]{0.22\linewidth}
                \includegraphics[width=\textwidth]{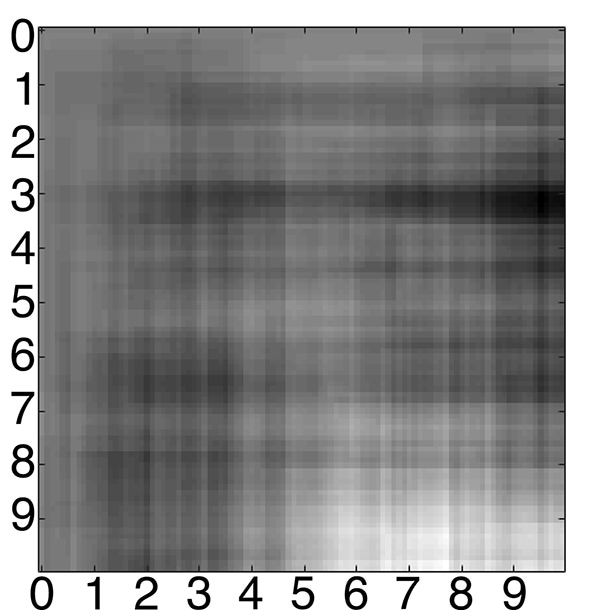}
                \caption{Poisson, $\lambda=50$}
                \label{fig:2dgaussDNlargelambda}
\end{subfigure}
\begin{subfigure}[t]{0.22\linewidth}
                \includegraphics[width=\textwidth]{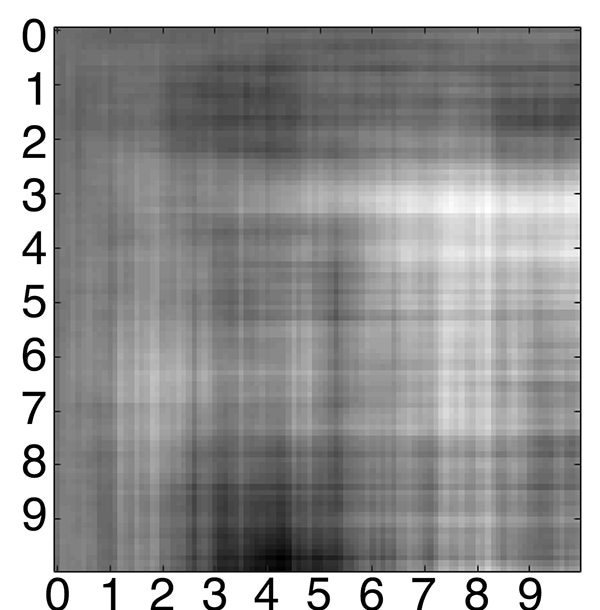}
                \caption{Gaussian, $\lambda\rightarrow \infty$}
                \label{fig:2dgaussDNgauss}
\end{subfigure}
\caption{\label{fig:2dgaussDN}Processes generated by $\mathrm{D}$, so that $s=\mathrm{D}^{-1}w$. In (a)-(c), $w$ is a Poisson noise with Gaussian jumps, with increasing $\lambda$. In (d), $w$ is a Gaussian white noise.}
\end{figure}

	\begin{figure}
\centering
\begin{subfigure}[t]{0.22\linewidth}
                \includegraphics[width=\textwidth]{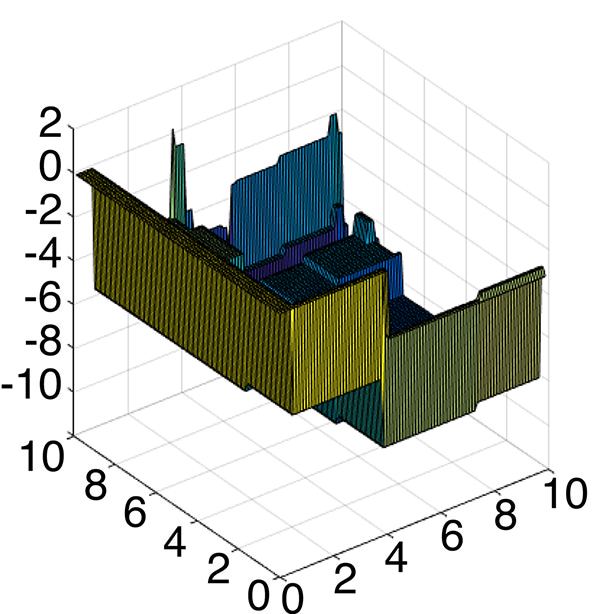}
                \caption{Poisson, $\lambda=0.1$}
                \label{fig:2dgaussDNsmalllambdasurf}
\end{subfigure}
\begin{subfigure}[t]{0.22\linewidth}
                \includegraphics[width=\textwidth]{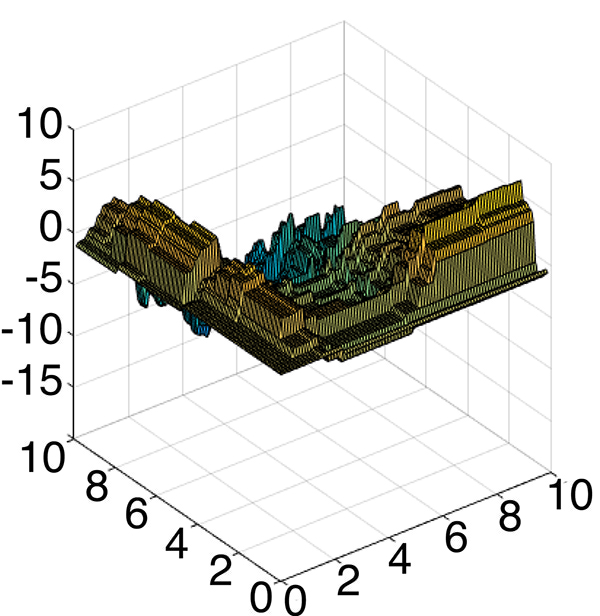}
                \caption{Poisson, $\lambda=1$}
                \label{fig:2dgaussDNmidlambdasurf}
\end{subfigure}
\begin{subfigure}[t]{0.22\linewidth}
                \includegraphics[width=\textwidth]{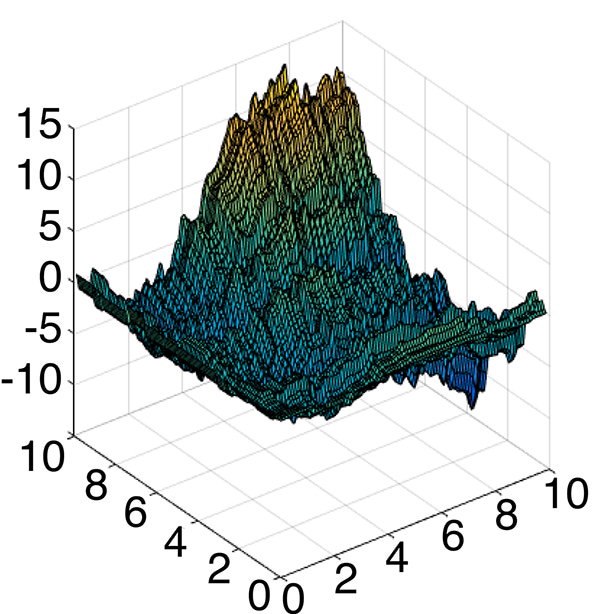}
                \caption{Poisson, $\lambda=50$}
                \label{fig:2dgaussDNlargelambdasurf}
\end{subfigure}
\begin{subfigure}[t]{0.22\linewidth}
                \includegraphics[width=\textwidth]{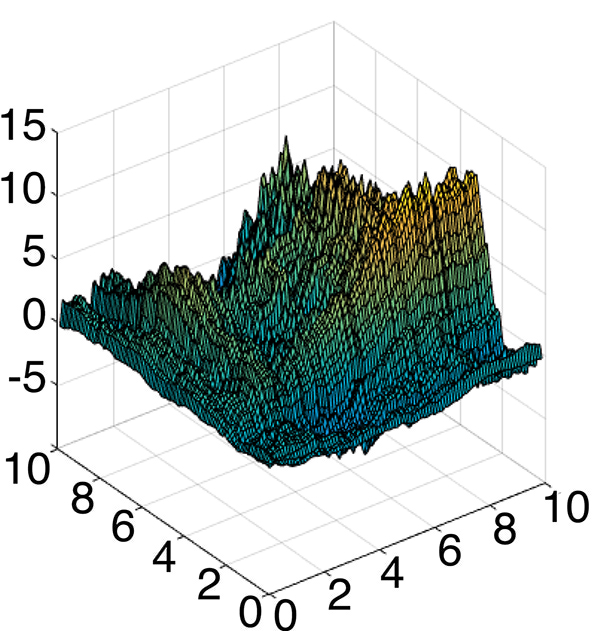}
                \caption{Gaussian, $\lambda\rightarrow \infty$}
                \label{fig:2dgaussDNgausssurf}
\end{subfigure}
\caption{\label{fig:2dgaussDNsurf}3-D representation of processes generated by $\mathrm{D}$, so that $s=\mathrm{D}^{-1}w$. In (a)-(c), $w$ is a Poisson noise with Gaussian jumps, with increasing $\lambda$. In (d), $w$ is a Gaussian white noise.}
\end{figure}

	\begin{figure}
\centering
\begin{subfigure}[t]{0.22\linewidth}
                \includegraphics[width=\textwidth]{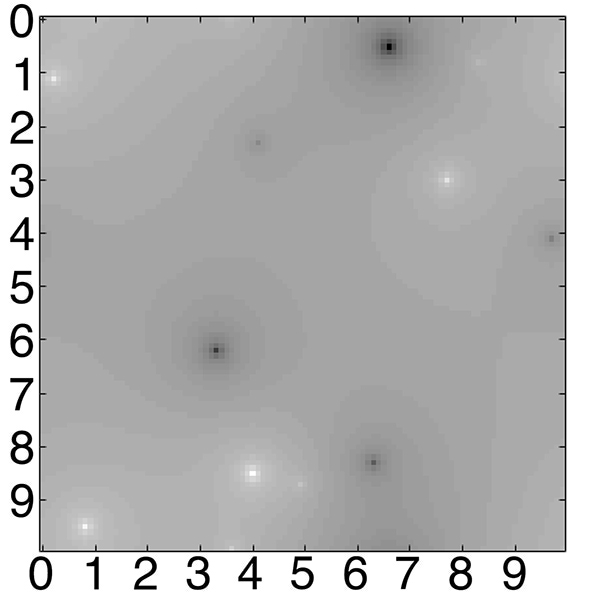}
                \caption{Poisson, $\lambda=0.1$}
                \label{fig:2dgaussLapsmalllambda}
\end{subfigure}
\begin{subfigure}[t]{0.22\linewidth}
                \includegraphics[width=\textwidth]{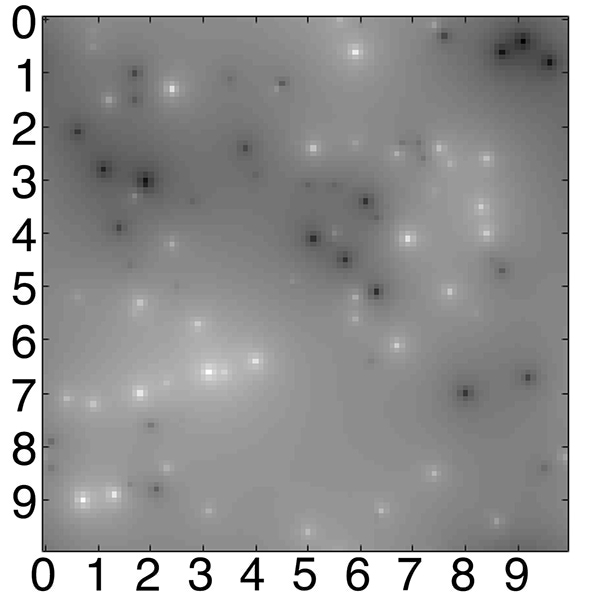}
                \caption{Poisson, $\lambda=1$}
                \label{fig:2dgaussLapmidlambda}
\end{subfigure}
\begin{subfigure}[t]{0.22\linewidth}
                \includegraphics[width=\textwidth]{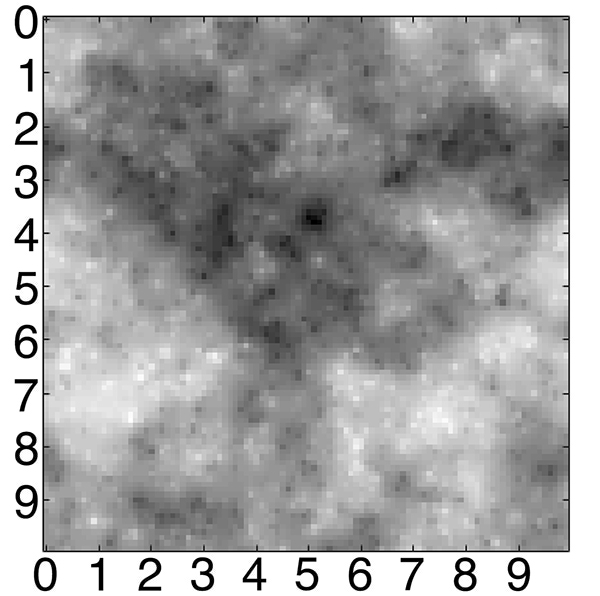}
                \caption{Poisson, $\lambda=50$}
                \label{fig:2dgaussLapargelambda}
\end{subfigure}
\begin{subfigure}[t]{0.22\linewidth}
                \includegraphics[width=\textwidth]{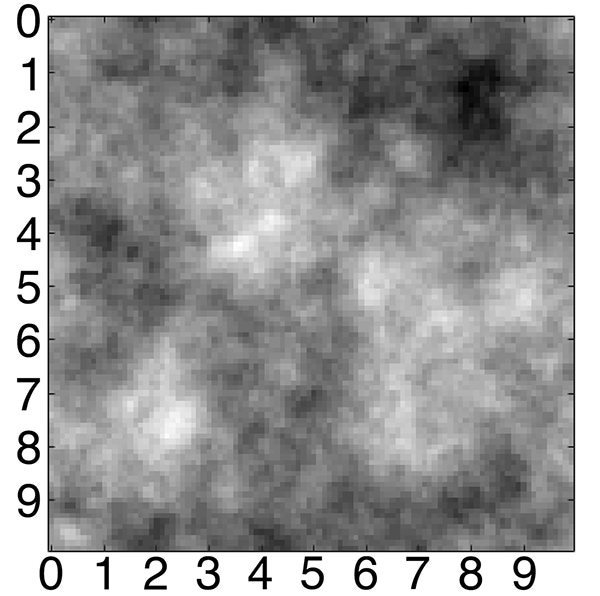}
                \caption{Gaussian, $\lambda\rightarrow \infty$}
                \label{fig:2dgaussLapgauss}
\end{subfigure}
\caption{\label{fig:2dgaussLap}Processes generated by $(-\Delta)^{\frac{\gamma}{2}}$, $\gamma=1.5$, so that $s=((-\Delta)^{\frac{\gamma}{2}})^{-1}w$. In (a)-(c), $w$ is a Poisson noise with Gaussian jumps, with increasing $\lambda$. In (d), $w$ is a Gaussian white noise.}
\end{figure}

	\begin{figure}
\centering
\begin{subfigure}[t]{0.22\linewidth}
                \includegraphics[width=\textwidth]{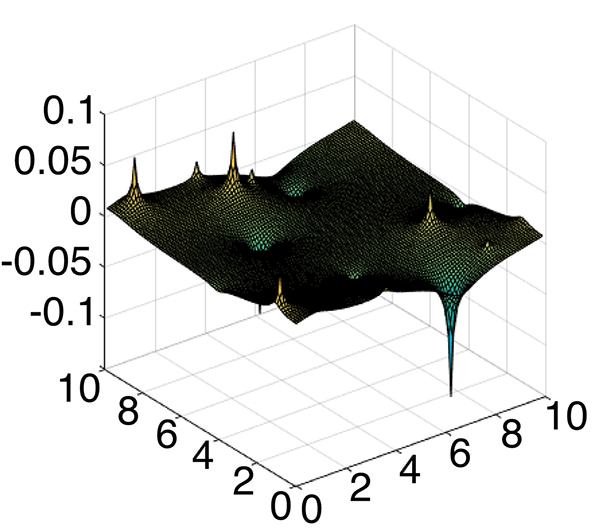}
                \caption{Poisson, $\lambda=0.1$}
                \label{fig:2dgaussLapsmalllambdasurf}
\end{subfigure}
\begin{subfigure}[t]{0.22\linewidth}
                \includegraphics[width=\textwidth]{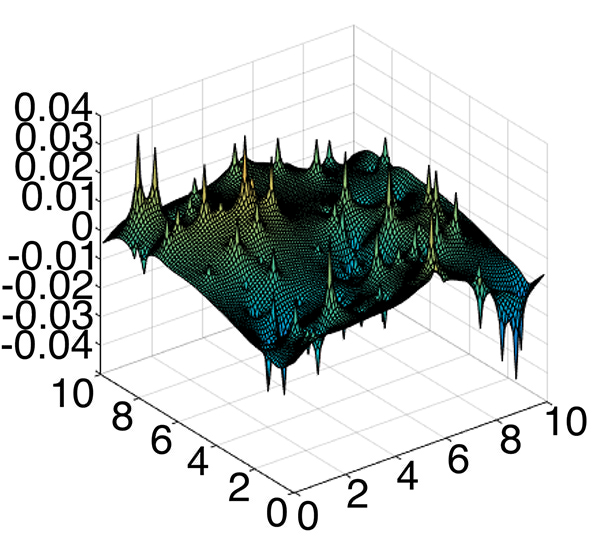}
                \caption{Poisson, $\lambda=1$}
                \label{fig:2dgaussLapmidlambdasurf}
\end{subfigure}
\begin{subfigure}[t]{0.22\linewidth}
                \includegraphics[width=\textwidth]{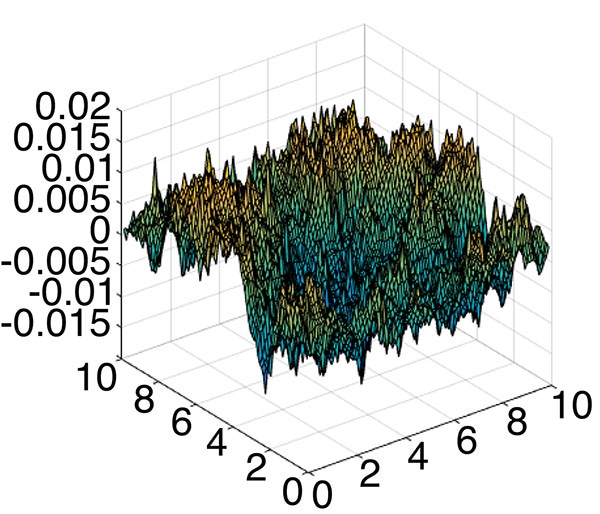}
                \caption{Poisson, $\lambda=50$}
                \label{fig:2dgaussLaplargelambdasurf}
\end{subfigure}
\begin{subfigure}[t]{0.22\linewidth}
                \includegraphics[width=\textwidth]{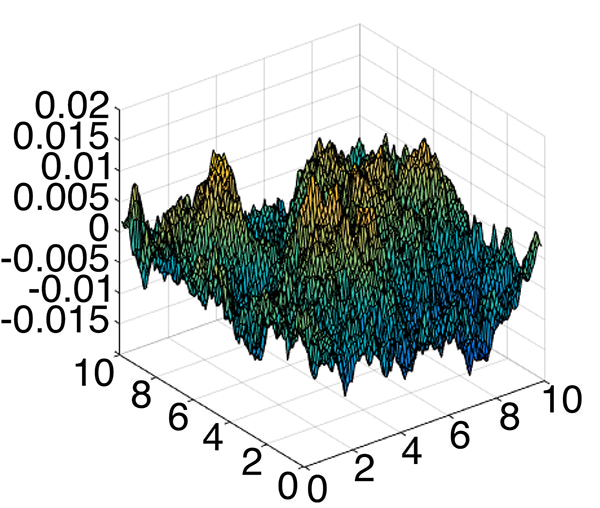}
                \caption{Gaussian, $\lambda\rightarrow \infty$}
                \label{fig:2dgaussLapgausssurf}
\end{subfigure}
\caption{\label{fig:2dgaussLapsurf}3-D representation of processes generated by $(-\Delta)^{\frac{\gamma}{2}}$, $\gamma=1.5$, so that $s=((-\Delta)^{\frac{\gamma}{2}})^{-1}w$. In (a)-(c), $w$ is a Poisson noise with Gaussian jumps, with increasing $\lambda$. In (d), $w$ is a Gaussian white noise.}
\end{figure}

	\begin{figure}
\centering
\begin{subfigure}[t]{0.22\linewidth}
                \includegraphics[width=\textwidth]{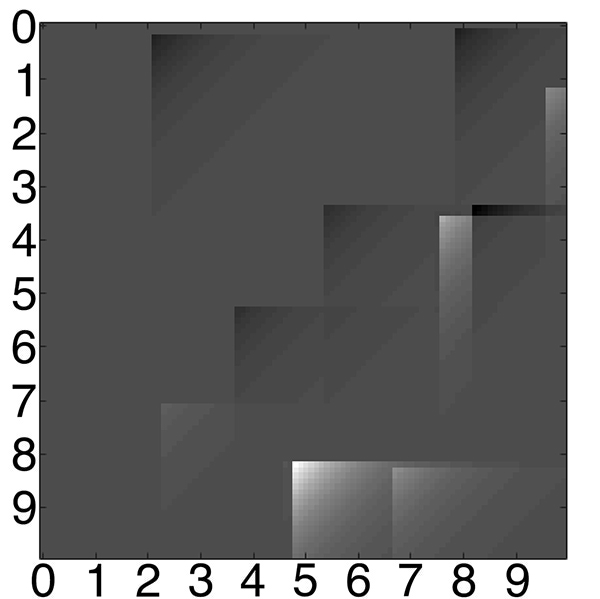}
                \caption{Poisson, $\lambda=0.1$}
                \label{fig:2dlaplaceDaIsmalllambda}
\end{subfigure}
\begin{subfigure}[t]{0.22\linewidth}
                \includegraphics[width=\textwidth]{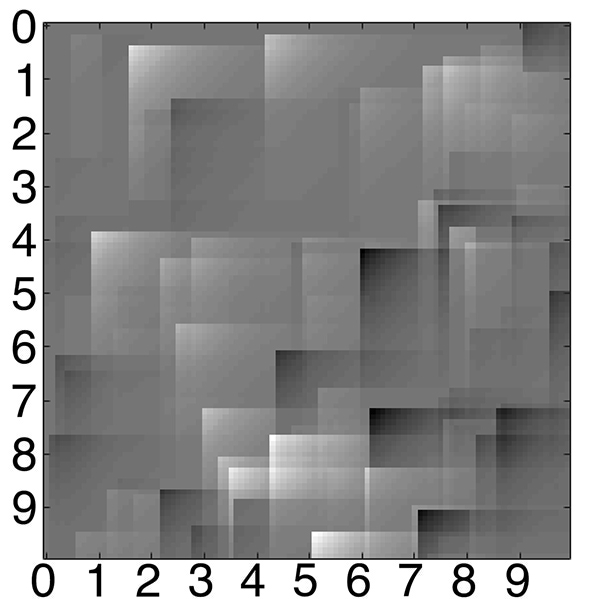}
                \caption{Poisson, $\lambda=1$}
                \label{fig:2dlaplaceDaImidlambda}
\end{subfigure}
\begin{subfigure}[t]{0.22\linewidth}
                \includegraphics[width=\textwidth]{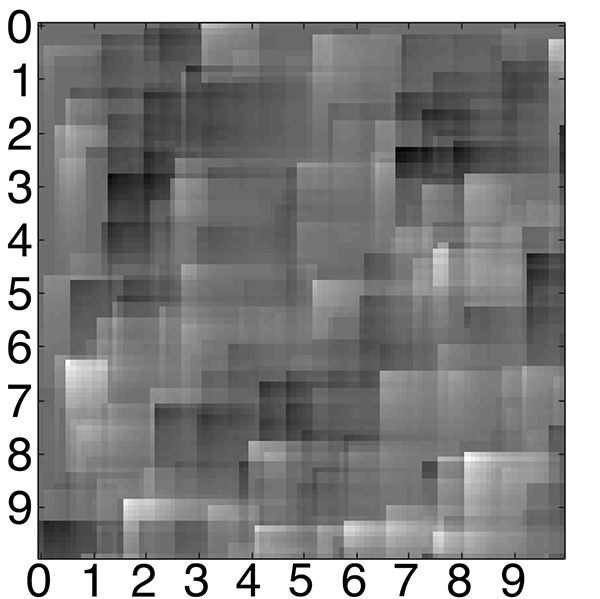}
                \caption{Poisson, $\lambda=50$}
                \label{fig:2dlaplaceDaIargelambda}
\end{subfigure}
\begin{subfigure}[t]{0.22\linewidth}
                \includegraphics[width=\textwidth]{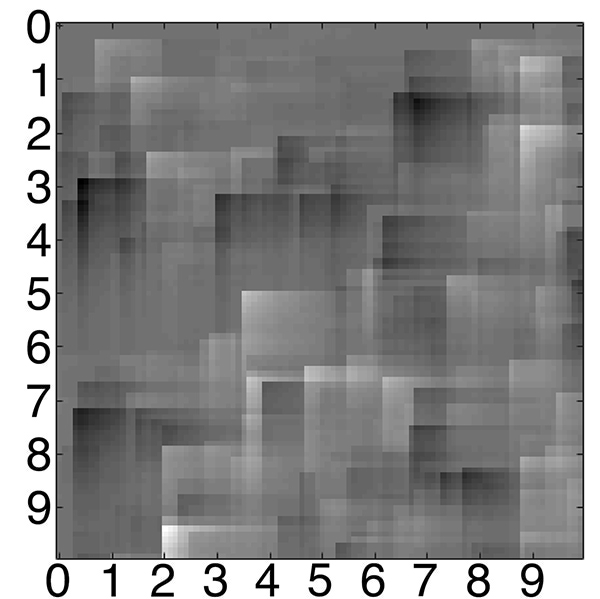}
                \caption{Laplace, $\lambda\rightarrow \infty$}
                \label{fig:2dlaplaceDaIgauss}
\end{subfigure}
\caption{\label{fig:2dlaplaceDaI}Processes generated by $(\mathrm{D}_x+\alpha\mathrm{I})(\mathrm{D}_y+\alpha\mathrm{I})$, $\alpha=0.1$, so that $s=\left( (\mathrm{D}_x+\alpha\mathrm{I})(\mathrm{D}_y+\alpha\mathrm{I}) \right)^{-1}w$. In (a)-(c), $w$ is a Poisson noise with Laplace jumps, with increasing $\lambda$. In (d), $w$ is a Laplace white noise.}
\end{figure}

	\begin{figure}
\centering
\begin{subfigure}[t]{0.22\linewidth}
                \includegraphics[width=\textwidth]{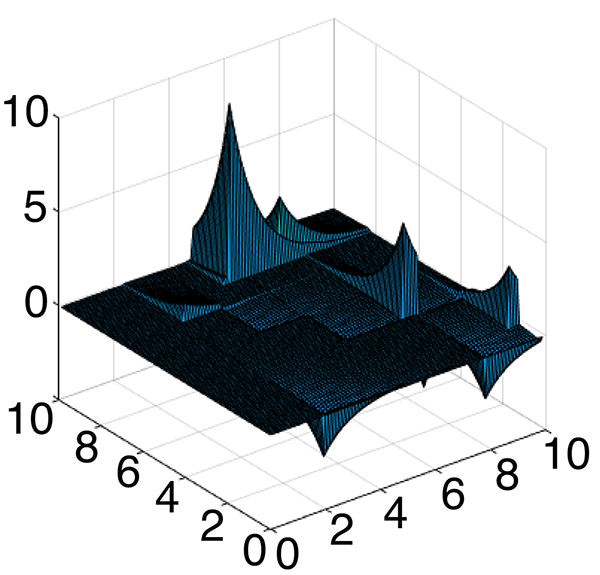}
                \caption{Poisson, $\lambda=0.1$}
                \label{fig:2dlaplaceDaIsmalllambdasurf}
\end{subfigure}
\begin{subfigure}[t]{0.22\linewidth}
                \includegraphics[width=\textwidth]{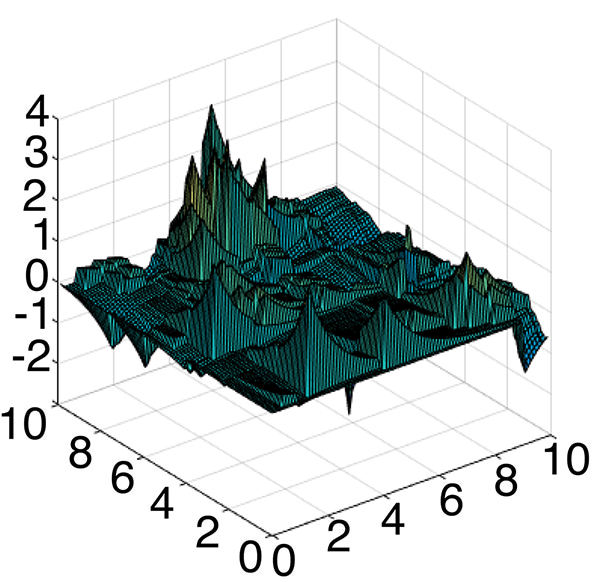}
                \caption{Poisson, $\lambda=1$}
                \label{fig:2dlaplaceDaImidlambdasurf}
\end{subfigure}
\begin{subfigure}[t]{0.22\linewidth}
                \includegraphics[width=\textwidth]{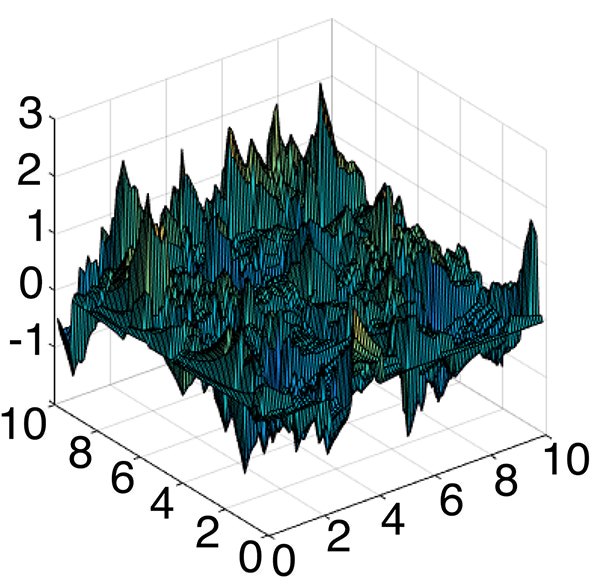}
                \caption{Poisson, $\lambda=50$}
                \label{fig:2dlaplaceDaIlargelambdasurf}
\end{subfigure}
\begin{subfigure}[t]{0.22\linewidth}
                \includegraphics[width=\textwidth]{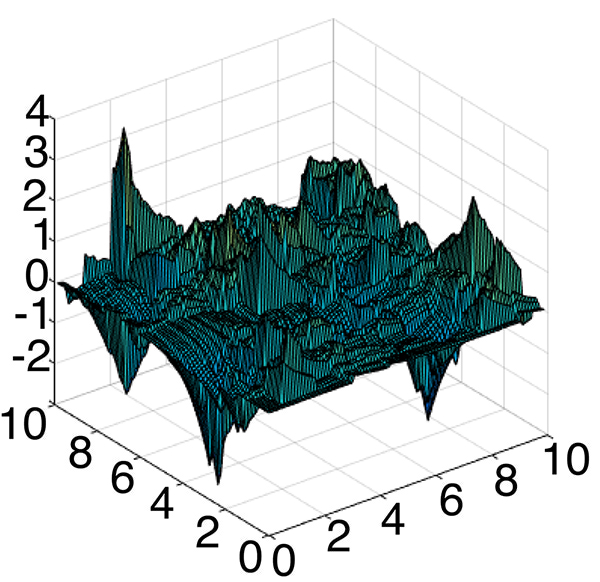}
                \caption{Laplace, $\lambda\rightarrow \infty$}
                \label{fig:2dlaplaceDaIgausssurf}
\end{subfigure}
\caption{\label{fig:2dlaplaceDaIsurf}3-D representation of processes generated by $(\mathrm{D}_x+\alpha\mathrm{I})(\mathrm{D}_y+\alpha\mathrm{I})$, $\alpha=0.1$, so that $s=\left( (\mathrm{D}_x+\alpha\mathrm{I})(\mathrm{D}_y+\alpha\mathrm{I}) \right)^{-1}w$. In (a)-(c), $w$ is a Poisson noise with Laplace jumps, with increasing $\lambda$. In (d), $w$ is a Laplace white noise.}
\end{figure}



\section{Conclusion} \label{sec:conclusion}
Our main result in this work is the proof that any generalized L\'evy process $s = \Lop^{-1} w$ is the limit in law of generalized Poisson processes obeying the same equation, but where $w$ corresponds to an appropriate impulsive Poisson noises. In addition, we showed that generalized Poisson processes are random $\Lop$-splines. In the asymptotic regime, generalized L\'evy processes can thus conveniently be described using splines. 

This result is interesting in practice as it provides a new way of efficiently generating approximations of broad classes of sparse processes $s = \Lop^{-1} w$. The only remaining requirement is the ability to generate the infinitely divisible random variable that drives the white noise $w$.
From Theorem~\ref{theo:main}, the resulting approximation is guaranteed to be statistically identical to the original $s$. This confirms the remarkable intuition that Bode and Shannon enunciated decades before the formulation of the mathematical tools needed to prove their claims.

\newpage

\bibliographystyle{IEEEtran} 
\bibliography{references}
 
\end{document}